\newtheorem{thm}{Theorem}
\newtheorem{lemma}{Lemma}
\newtheorem{defin}{Definition}
\newcommand{\bmtx}{\begin{bmatrix}}
\newcommand{\emtx}{\end{bmatrix}}
\newcommand{\bsmtx}{\left[ \begin{smallmatrix}} 
\newcommand{\esmtx}{\end{smallmatrix} \right]}
\newcommand{\bmatarray}[1]{\left[\begin{array}{#1}}
\newcommand{\ematarray}{\end{array}\right]}
\newcommand{\field}[1]{\mathbb{#1}}
\newcommand{\R}{\field{R}}
\newcommand{\N}{\field{N}}
\newcommand{\Z}{\field{Z}}
\newcommand{\C}{\field{C}}
\newcommand{\Dc}{\mathcal{D}^c}
\begin{document}

\title{Robust Regret Optimal Control}

\author{Jietian Liu and Peter Seiler
  \thanks{Jietian Liu and Peter Seiler are with the Department of
    Electrical Engineering and Computer Science, University of
    Michigan, Emails: {\tt\small jietian@umich.edu} and
    {\tt\small pseiler@umich.edu}.  } }

\date{}
\maketitle

\begin{abstract}

  This paper presents a synthesis method for robust, regret optimal
  control.  The plant is modeled in discrete-time by an uncertain
  linear time-invariant (LTI) system.  An optimal non-causal
  controller is constructed using the nominal plant model and given
  full knowledge of the disturbance.  Robust regret is defined
  relative to the performance of this optimal non-causal control.  It
  is shown that a controller achieves robust regret if and only if it
  satisfies a robust $H_\infty$ performance condition. DK-iteration
  can be used to synthesize a controller that satisfies this condition
  and hence achieve a given level of robust regret. The approach is
  demonstrated three examples: (i) a simple single-input,
  single-output classical design, (ii) a longitudinal control for a
  simplified model for a Boeing 747 model, and (iii) an active
  suspension for a quarter car model. All examples compare the robust
  regret optimal against regret optimal controllers designed without
  uncertainty.


\end{abstract}

\section{Introduction}
\label{sec:intro}

This paper considers control design for a discrete-time, linear
time-invariant (LTI) plant. The goal is to design an output-feedback
controller to: (i) stabilize the plant, and (ii) ensure a generalized
error remains small in the presence of disturbances.  Two common
problem formulations assume the plant is known and minimize the
closed-loop gain from disturbance to error in either the $H_2$ or
$H_\infty$ norm~\cite{zhou96,dullerud99}. The disturbance in the $H_2$
or $H_\infty$ optimal control problems has the interpretation of being
random noise or the worst-case (antagonistic) input, respectively.

Recent work has focused on alternative formulations that avoid making
these assumptions on the disturbance. Two threads of important results
are most relevant for our paper.  The first thread considers
performance relative to the optimal non-causal (dynamic) controller
with full knowledge of the disturbance
\cite{goel20arXiv,goel19PMLR,goel22CDC,goel21PMLR,sabag21ACC,sabag21arXiv}.
Regret is defined to be the (additive) difference in performance
achieved by a given controller and the non-causal controller.  The
objective is to design a controller that minimizes the worst-case
regret over all disturbances. It is shown that this problem can be
converted to an equivalent $H_\infty$ synthesis problem with a scaled
plant model. Subsequent work considered the multiplicative ratio in
performance, called the competitive ratio, achieved by a given
controller relative to the non-causal controller
\cite{goel22TAC,sabag22arXiv}.  Additional related work considers
state-and input constraints using system level
synthesis~\cite{didier22} and regret-bounds for $H_\infty$
controllers~\cite{karapetyan22}. A second thread measures regret of a
given controller relative to the best static state-feedback with full,
non-causal knowledge of the disturbance sequence
\cite{agarwal19}. Online convex optimization techniques \cite{hazan16}
are used to optimize over a class of disturbance action policies that
depend on a finite history of the disturbance.
 
The key contribution of our paper is to provide a solution to a robust
regret optimal control problem.  We formulate the output feedback
control problem in Section~\ref{sec:robprob} with an uncertain plant
and general interconnection used in the robust control literature
\cite{zhou96,dullerud99}.  We define robust regret relative to the
performance achieved by the optimal non-causal controller on the
nominal plant (with no uncertainty).  We use a definition of regret
that includes as special cases $H_\infty$
control~\cite{zhou96,dullerud99,dgkf89}, (additive) regret
\cite{goel20arXiv,goel19PMLR,goel22CDC,goel21PMLR,sabag21ACC,sabag21arXiv}
and (multiplicative) competitive ratio
\cite{goel22TAC,sabag22arXiv}. Our definition of regret is a special
case of regret-optimal control with weights~\cite{sabag22arXiv}
(although we include model uncertainty). It is important to note that
model uncertainty is not equivalent to an exogenous disturbance. For
example, model uncertainty can cause an instability and unbounded
signals but this is not possible with bounded disturbances. This
distinct feature of model uncertainty motivates the importance of
considering robustness in addition to the disturbance rejection.



We solve the robust regret problem using a similar solution approach
as in these prior works. First we derive the optimal non-causal
controller from \cite{hassibi99} in our more general setting
(Section~\ref{sec:K0}). Second, the robust regret problem is
converted, via a spectral factorization of the optimal non-causal cost,
to an equivalent robust synthesis problem
(Section~\ref{sec:robfeas}). The robust synthesis problem is
non-convex but a sub-optimal controller can be computed via a
coordinate-wise search known as DK-iteration or $\mu$-synthesis
\cite{doyle85,doyle87,balas94,lind94,packard93asme}. As an
intermediate step, we solve the nominal regret problem in
Section~\ref{sec:nomregret} for the case of known plant dynamics. As
noted above, the nominal case generalizes the cost function and
definition of regret compared to previous works. The nominal case also
forms the foundation for our main results on robust regret with model
uncertainty.  Section~\ref{sec:example} provides examples to compare
the proposed method against existing robust control methods,
e.g. $H_\infty$ control, and (nominal) regret-based methods,
e.g. additive regret and competitive ratio.

It is important to note that regret is typically defined using an
omniscient control design as a benchmark.  Our nominal regret
definition applies this convention to the disturbance, i.e. the
benchmark is the optimal non-causal controller with full knowledge of
the disturbance. However, our robust regret definition does not apply
this convention to the model uncertainty. Specifically, our benchmark
for robust regret is the optimal controller designed on the nominal
model with $\Delta=0$.  Thus the method in this paper designs
controllers that have small regret with respect to the disturbance and
are robust to model variations.  It may be possible to design robust
regret optimal controllers where the baseline has full knowledge of
the disturbance and the specific value of the uncertain plant, e.g.
by adapting gain-scheduling results such as \cite{packard94}.  This
will be considered in future work.

%

\section{Nominal Regret Optimal Control}
\label{sec:nomregret}

\subsection{Notation}
\label{sec:not}

This subsection reviews basic notation regarding vectors, matrices,
signals and systems.  This material can be found in most standard
texts on signals and systems, e.g. \cite{zhou96,dullerud99}.

Let $\R^n$ and $\R^{n\times m}$ denote the sets of real $n\times 1$
vectors and $n\times m$ matrices, respectively. Similarly, $\C^n$ and
$\C^{n\times m}$ denote the sets of complex vectors and matrices of
given dimensions. The superscripts $\top$ and $*$ denote transpose and
complex conjugate (Hermitian) transpose of a matrix.  Moreover, if
$M\in \C^{n\times n}$ then $M^{-\top}$ denotes
$(M^\top)^{-1}=(M^{-1})^\top$.  The Euclidean norm (2-norm) for a
vector $v\in \C^n$ is defined to be
$\| v\|_2:= \sqrt{ v^* v} = \sqrt{ \sum_{i=1}^n |v_i|^2}$.  The
induced 2-norm for a matrix $M \in \C^{n\times m}$ is defined to be:
\begin{align*} 
  \|M\|_{2\to 2}:=\max_{0\ne d\in \C^m} \frac{\|Md\|_2}{\|d\|_2}.
\end{align*}
The induced 2-norm for a matrix $M$ is equal to the maximum singular
value, i.e. $\|M\|_{2\to 2} = \bar{\sigma}(M)$ (Section 2.8 of
\cite{zhou96}). Similarly, $\R^n$ and $\R^{n\times m}$ denote the sets
of $n\times 1$ real vectors and $n\times m$ real matrices,
respectively. The same definitions for the vector 2-norm and
matrix induced 2-norm hold for real vectors and matrices.

The sets of integers and nonnegative integers are denoted by $\Z$ and
$\N$.  Let $v:\Z \to \R^n$ and $w:\Z \to \R^n$ be real, vector-valued
sequences.  Note that we will mainly use two-sided sequences defined
from $t=-\infty$ to $t=+\infty$.  Define the inner product
$\langle v,w \rangle : = \sum_{t=-\infty}^\infty v_t^\top w_t$.  The
set $\ell_2$ is an inner product space with sequences $v$ that satisfy
$\langle v,v\rangle < \infty$. The corresponding norm is
$\|v\|_2 := \sqrt{ \langle v,v \rangle}$. Finally, define the
truncation operator $P_T$ as a mapping from a sequence $v$
to another sequence $w=P_T v$ defined by:
\begin{align*}
  w_t:=\left\{
    \begin{array}{ll}
     v_t  & \mbox{if } t \le T \\ 0 & \mbox{otherwise.}
    \end{array} \right.
\end{align*}


Next, consider a discrete-time, LTI system $G$ with the following
state-space model:
\begin{align}
  \label{eq:Gnotation}
  \begin{split}
  x_{t+1} & = A \, x_t + B\, d_t \\
    e_t & = C \, x_t + D\, d_t,
  \end{split}
\end{align}
where $x_t \in \R^{n_x}$ is the state, $d_t \in \R^{n_d}$ is the
input, and $e_t \in \R^{n_e}$ is the output.  A system $G$ is said to
be causal if $P_T G d = P_T G (P_T d)$, i.e. the output up to time $T$
only depends on the input up to time $T$.  The system is said to be
non-causal if it is not causal, i.e. the output can possibly on future
values of the input.  

The matrix $A\in \R^{n\times n}$ is said to be Schur stable if the
spectral radius is $<1$.  If $A$ is Schur stable then $G$ is a causal,
stable system.  Hence $G$ maps an input $d\in\ell_2$ to output
$e\in \ell_2$ starting from the initial condition $x_{-\infty}=0$.
The induced $\ell_2$-norm for a stable system $G$ is defined to be:
\begin{align*}
  \|G\|_{2\to 2}:=\max_{0\ne d\in \ell_2} \frac{\|G d\|_2}{\|d\|_2}.
\end{align*}
Finally, the transfer function for \eqref{eq:Gnotation}
is $G(z) = C (z I_{n_x} - A)^{-1} B + D$. The $H_\infty$ norm
for a stable system $G$ is:
\begin{align*}
  \|G\|_\infty :=\max_{\theta \in [0,2\pi] }  
   \bar{\sigma}\left( G(e^{j\theta}) \right).
\end{align*}
The induced $\ell_2$-norm for a stable system $G$ is equal to the
$H_\infty$ norm, i.e. $\|G\|_{2\to 2} = \|G\|_\infty$ (Theorem 2.3.2 of
\cite{dahleh94}).

\subsection{Problem Formulation}
\label{sec:nomprob}

Consider the feedback interconnection shown in Figure~\ref{fig:FLPK}.
The interconnection, denoted $F_L(P,K)$, consists of a controller $K$
in feedback around the lower channels of the plant $P$. This is a
standard feedback diagram for optimal control formulations in the
robust control literature \cite{zhou96,dullerud99}.  The plant $P$ is
a discrete-time, linear time-invariant (LTI) system with the following
state-space representation:
\begin{align}
  \label{eq:Pic}
   \bmtx x_{t+1}\\ e_t \\ y_t\emtx =
   \bmtx A & B_d & B_u\\
   C_e & 0 & D_{eu} \\
   C_y & D_{yd} & 0  \\
   \emtx \bmtx x_t \\ d_t \\u_t \emtx,
\end{align}
where $x_t \in \R^{n_x}$ is the state, $d_t \in \R^{n_d}$ is the
disturbance, $e_t \in \R^{n_e}$ is the error, $u_t \in \R^{n_u}$ is
the control input and $y_t \in \R^{n_y}$ is the measured output.  The
plant~\eqref{eq:Pic} assumes zero feedthrough matrices from $u$ to $y$
and from $d$ to $e$, i.e. $D_{yu}=0$ and $D_{ed}=0$.  A standard
loop-shift transformation can be used, under mild technical
conditions, to convert plants with $D_{yu}\ne 0$ and/or $D_{ed} \ne 0$
into the form of Equation~\ref{eq:Pic} (Section 17.2 of
\cite{zhou96}).

\begin{figure}[h]
\centering
\scalebox{0.9}{
\begin{picture}(140,90)(23,20)
 \thicklines
 \put(75,65){\framebox(40,40){$P$}}
 \put(160,92){$d$}
 \put(155,95){\vector(-1,0){40}}  
 \put(25,92){$e$}
 \put(75,95){\vector(-1,0){40}}  
 \put(80,25){\framebox(30,30){$K$}}
 \put(43,69){$y$}
 \put(55,75){\line(1,0){20}}  
 \put(55,75){\line(0,-1){35}}  
 \put(55,40){\vector(1,0){25}}  
 \put(141,69){$u$}
 \put(135,40){\line(-1,0){25}}  
 \put(135,40){\line(0,1){35}}  
 \put(135,75){\vector(-1,0){20}}  
\end{picture}
} 
\caption{Generic feedback interconnection
$F_L(P,K)$ for synthesis.}
\label{fig:FLPK}
\end{figure}
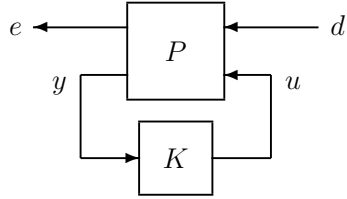


The goal is to design an output-feedback controller $K$ to stabilize
the plant and ensure the error remains ``small''. The cost achieved by
a controller $K$ on a disturbance $d \in \ell_2$ (assuming
$x_{-\infty}=0$) is:
\begin{align}
  \label{eq:JKd}
  J(K,d):= \| e\|_2^2 = \sum_{t=-\infty}^\infty e_t^\top e_t.
\end{align}
Here the cost is defined using a two-sided disturbance $d\in \ell_2$
defined from $t=-\infty$ to $t=\infty$.  It is common to formulate
optimal control problems using one-sided $\ell_2$ signals starting
from $t=0$ with the initial condition $x_0=0$.  However, a non-causal
controller will be introduced later.  Two-sided signals are used to
avoid nonzero initial conditions arising from this non-causal
controller.

To simplify notation, define $Q:=C_e^\top C_e$, $S:=C_e^\top D_{eu}$
and $R:=D_{eu}^\top D_{eu}$.  The cost can be re-written as:
\begin{align}
  \label{eq:Jqsr}
  J(K,d):= \sum_{t=-\infty}^\infty \bmtx x_t \\ u_t \emtx^\top 
            \bmtx Q & S \\ S^\top & R \emtx \bmtx x_t \\ u_t \emtx.
\end{align}
Equation~\ref{eq:Jqsr} is often used as the starting point for optimal
control problems with a linear quadratic cost. If $(Q,S,R)$ are given
then we can convert to equivalent output matrices $(C_e,D_{eu})$.  For
example, assume $Q\succeq 0$, $R\succ 0$, and $S=0$ are given.  Then
the corresponding error $e$ is obtained with
$C_e := \bsmtx Q^{1/2} \\ 0 \esmtx$ and
$D_{eu} := \bsmtx 0 \\ R^{1/2} \esmtx$.


We will use the optimal non-causal controller, denoted $K^o$, as a
baseline for comparison following along the lines of
\cite{goel22TAC,goel20arXiv,goel22CDC,goel21PMLR,sabag21ACC,sabag21arXiv,sabag22arXiv}. The
controller $K^o$ depends directly on past, present, and future values
of $d$.  This controller is described in detail in
Section~\ref{sec:K0} below with an explicit state-space realization
for $K^o$ given in Theorem~\ref{thm:K0}.  In contrast, a causal,
output-feedback controller depends only on past and present values of
the disturbance indirectly via the effect of $d$ on the measurements.
We define the performance of any (causal, output-feedback) controller
$K$ relative to the baseline, non-causal controller $K^o$ as follows:

\begin{defin}
\label{def:nomregret}
Let $\gamma_d \ge 0$ and $\gamma_J \ge 0$ be given.  A controller $K$
achieves $(\gamma_d,\gamma_J)$-regret relative to the optimal
non-causal controller $K^o$ if $F_L(P,K)$ is stable and:
\begin{align}
  \label{eq:nomregret}
  J(K,d) < \gamma_d^2 \, \| d \|_2^2 + \gamma_J^2 \, J(K^o,d)
  \,\,\, \forall d \in \ell_2, \, d\ne 0.
\end{align}
\end{defin}


Section~\ref{sec:nomfeas} provides a method to solve the following
$(\gamma_d,\gamma_J)$-regret feasibility problem: Given
$(\gamma_d,\gamma_J)$, find a causal, output-feedback controller $K$
that achieves $(\gamma_d,\gamma_J)$-regret relative to $K^o$ or verify
that this level of regret cannot be achieved. This feasibility problem
includes several existing synthesis methods as special cases:
\begin{itemize}
\item \emph{$H_\infty$ Synthesis}: The $H_\infty$ feasibility problem
  is: Given $\gamma_\infty$, find a controller $K$ such that
  $\|F_L(P,K)\|_\infty<\gamma_\infty$ or verify that this level of
  performance cannot be achieved~\cite{zhou96,dullerud99,dgkf89}.
  There are a variety of existing methods to solve the $H_\infty$
  feasibility problem in discrete-time including the use of: (i)
  Riccati equations~\cite{iglesias91,limebeer89,stoorvogel90}, (ii)
  linear matrix inequalities~\cite{gahinet94}, or (iii) bilinear
  transformations combined with solutions for the continuous-time
  problem~\cite{dgkf89,zhou96}. The objective in $H_\infty$ synthesis
  is to minimize the closed-loop $H_\infty$ norm:
  $\inf_K \|F_L(P,K)\|_\infty$. This optimization can be solved to
  within any desired tolerance using bisection and the solution of the
  $H_\infty$ feasibility problem.  To connect this to regret, recall
  that the $H_\infty$ norm of an LTI system is equal to the induced
  $\ell_2$ norm. Hence the closed-loop with a controller $K$ satisfies
  $\|F_L(P,K)\|_\infty<\gamma_\infty$ if and only if
  $J(K,d)=\|e\|_2^2 < \gamma_\infty^2 \|d\|_2^2$ for all nonzero
  $d\in \ell_2$.  Thus $\|F_L(P,K)\|_\infty<\gamma_\infty$ if and only
  if a controller $K$ achieves $(\gamma_\infty,0)$-regret. Note that
  $(\gamma_\infty,0)$-regret does not depend on the non-causal
  controller $K^o$.

\item \emph{Competitive Ratio Synthesis}: The competitive ratio
  feasibility problem is: Given $\gamma_C$, find a controller $K$ such
  that $\frac{J(K,d)}{J(K^o,d)} < \gamma_C^2$ for all nonzero
  $d\in \ell_2$ or verify that this level of performance cannot be
  achieved~\cite{goel22TAC,sabag22arXiv}. Thus a controller yields a
  competitive ratio of $\gamma_C$ if and only if it achieves
  $(0,\gamma_C)$-regret with respect to $K^o$.  Again, the competitive
  ratio can be minimized to within any desired tolerance using
  bisection and the solution of the $(0,\gamma_C)$-regret feasibility
  problem.

\item \emph{(Additive) Regret-Based Synthesis}: The $\gamma_R$-regret
  feasibility problem is: Given $\gamma_R$, find a controller $K$ such
  that $J(K,d)-J(K^o,d) < \gamma_R^2 \|d\|_2^2$ for all nonzero
  $d\in \ell_2$ or verify that this level of performance cannot be
  achieved~\cite{sabag21ACC,sabag21arXiv,goel20arXiv,goel22CDC}.  Thus
  a controller yields $\gamma_R$-regret if and only if it achieves
  $(\gamma_R,1)$-regret with respect to $K^o$.  Again, the
  $\gamma_R$-regret can be minimized to within any desired tolerance
  using bisection with the $(\gamma_R,1)$-regret
  feasibility problem. The $\gamma_R$-regret is additive in the sense
  that $J(K,d)$ is within an additive factor $\gamma_R^2 \|d\|_2^2$ of
  the optimal non-causal cost.
\end{itemize}

In general we can use the $(\gamma_d,\gamma_J)$-regret feasibility
problem to solve for the Pareto optimal front of values for
$(\gamma_d,\gamma_J)$.  Specifically, the Pareto front can be
characterized by the following optimization parameterized
by $\theta\in [0,1]$:
\begin{align}
  \label{eq:ParetoOpt}
  \begin{split}
    \gamma^*(\theta):= & \min_{K,\gamma} \gamma   \\
    & \mbox{subject to: $K$ achieves
    $\left(\, \sqrt{1-\theta} \cdot \gamma,  \, 
    \sqrt{\theta} \cdot \gamma \,\right)$ regret.}
  \end{split}
\end{align}
The constraint with $\theta=0$ corresponds to the performance bound
$J(K,d) < \gamma^2 \, \| d \|_2^2$.  This yields $H_\infty$ synthesis,
i.e. $\gamma^*(0)=\gamma_\infty$.  Similarly, the constraint with
$\theta=1$ is $J(K,d) < \gamma^2 \, J(K^o,d)$ and this yields
competitive ratio synthesis, i.e. $\gamma^*(1)=\gamma_C$.  These are
extreme endpoints on the Pareto front. Any other value of
$\theta\in [0,1]$ corresponds to the bound
$J(K,d) < \gamma^2 \, \left( (1-\theta) \cdot \| d \|_2^2 + \theta
  \cdot J(K^o,d) \right)$. Thus all other points on the Pareto front
can be viewed as optimizing with respect to a convex combination of
the $H_\infty$ and competitive ratio costs.  The special case of
additive regret synthesis gives the specific point $(\gamma_R,1)$ on
the Pareto front.  This corresponds to the value of $\theta$ such that
$\sqrt{\theta}\cdot \gamma^*(\theta)=1$. (Such a value of $\theta$
exists under mild technical conditions.\footnote{The non-causal
  controller provides a lower bound on the performance of any causal
  controller: $J(K^o,d) \le J(K,d)$ for any causal controller $K$.
  Hence the competitive ratio must satisfy $\gamma_C \ge 1$ which
  further implies
  $\sqrt{\theta} \cdot \gamma^*(\theta) = \gamma_C \ge 1$ for
  $\theta=1$. Moreover, $\gamma^*(\theta)$ is bounded if the plant is
  stabilizable and observable.  Finally, if $\gamma^*(\theta)$ is a
  continuous function then it will cross a value of 1 for some value
  of $\theta \in [0,1]$.})  Finally, we note that
$(\gamma_d,\gamma_J)$-regret is a special case of regret-optimal
control with weights (Section 4.2 of \cite{sabag22arXiv}).



\subsection{Optimal Non-Causal Controller}
\label{sec:K0}

The optimal non-causal controller is assumed to have full knowledge of
the plant dynamics, plant state and the (past, current and future)
values of the disturbance.  $K^o$ is optimal in the sense that it
minimizes $J(K,d)$ for each $d\in \ell_2$.  A solution for the optimal
non-causal controller is given in Theorem 11.2.1 of
\cite{hassibi99}. The controller is expressed as an operator with
similar results used in \cite{sabag21ACC,sabag21arXiv,sabag22arXiv}.
An explicit state-space model for the finite-horizon, non-causal
controller is constructed in \cite{goel20arXiv,goel22CDC} using
dynamic programming.  These prior results were given for the case
where $S=0$ in the cost function~\eqref{eq:Jqsr}.  The corresponding
infinite-horizon result is given below as Theorem~\ref{thm:K0}
allowing for $S\ne 0$.  An independent proof is given here for
completeness using a completion of the squares argument.

The state-space model for $K^o$ will be constructed using the
stabilizing solution $X$ of a discrete-time algebraic Riccati equation
(DARE).  The next lemma gives sufficient
conditions for the existence of this stabilizing solution of the DARE.

\begin{lemma}
\label{lem:DARE}
Let $(A,B_u,C_e,D_{eu})$ be given and define $Q:=C_e^\top C_e$,
$S:=C_e^\top D_{eu}$, and $R:=D_{eu}^\top D_{eu}$.  

Assume: (i) $R \succ 0$, (ii) $(A,B_u)$ is stabilizable, (iii)
$A-B_uR^{-1}S^\top$ is nonsingular, and (iv)
$\bsmtx A-e^{j\theta} I & B_u \\ C_e & D_{eu} \esmtx$ has full column
rank for all $\theta \in [0,2\pi]$. Then there is a unique stabilizing
solution $X \succeq 0$ such that:
\begin{enumerate}
\item $X$ satisfies the following DARE:
\begin{align}
\label{eq:DARE}
  0 & = X  - A^\top XA  - Q \\
\nonumber
    & + (A^\top XB_u+S)\, (R+B_u^\top XB_u)^{-1} \, (A^\top XB_u+S)^\top.
\end{align}
\item The gain
$K_x:=(R+B_u^\top X B_u)^{-1}(A^\top X B_u+S)^\top$
is stabilizing, i.e.  $A-B_uK_x$ is a Schur matrix.
\item  $A-B_uK_x$ is nonsingular.
\end{enumerate}
\end{lemma}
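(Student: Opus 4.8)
The plan is to reduce the problem to a \emph{standard} DARE in which the cross term $S$ is absent, and then invoke the classical existence/uniqueness theorem for stabilizing solutions of the discrete-time Riccati equation (see, e.g., \cite{zhou96,stoorvogel90}). First I would introduce the feedback-shifted data $\tilde A := A - B_u R^{-1} S^\top$, $\tilde C_e := C_e - D_{eu} R^{-1} S^\top$, and $\tilde Q := Q - S R^{-1} S^\top$. A direct computation using $R = D_{eu}^\top D_{eu}$ and $S = C_e^\top D_{eu}$ shows $\tilde C_e^\top D_{eu} = 0$ and $\tilde C_e^\top \tilde C_e = \tilde Q$, so $\tilde Q = \tilde C_e^\top \tilde C_e \succeq 0$ and the associated cross term vanishes. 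A completion-of-squares rearrangement then shows that \eqref{eq:DARE} is \emph{identical} to the zero-cross-term DARE built from $(\tilde A, B_u, \tilde C_e)$, so the two equations have exactly the same solutions.

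Next I would verify that the four hypotheses translate into those of the standard theorem for the shifted data. Stabilizability of $(\tilde A, B_u)$ is immediate, since the state feedback $u \mapsto u - R^{-1}S^\top x$ does not change stabilizability. For the spectral condition, right-multiplying the pencil in assumption (iv) by the nonsingular matrix $\bsmtx I & 0 \\ -R^{-1}S^\top & I \esmtx$ converts it to $\bsmtx \tilde A - e^{j\theta} I & B_u \\ \tilde C_e & D_{eu} \esmtx$, which therefore also has full column rank for all $\theta$. Because $D_{eu}$ has full column rank ($R \succ 0$) and $\tilde C_e^\top D_{eu} = 0$, any vector in the kernel of this pencil is forced to have zero input component, which reduces the condition to $\bsmtx \tilde A - e^{j\theta} I \\ \tilde C_e \esmtx$ having full column rank for all $\theta$, i.e.\ $(\tilde C_e, \tilde A)$ has no unobservable modes on the unit circle. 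With $\tilde Q \succeq 0$, $R \succ 0$, $(\tilde A, B_u)$ stabilizable, and this detectability-on-the-circle condition, the standard theorem yields a unique stabilizing $X \succeq 0$ solving the shifted DARE, hence solving \eqref{eq:DARE}; this establishes existence, uniqueness, and item~1.

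It then remains to recover items~2 and~3 for the original gain $K_x$. Writing $M := (R + B_u^\top X B_u)^{-1}$, I would verify by direct algebra (using $B_u^\top X B_u = M^{-1} - R$) the closed-loop identity $A - B_u K_x = \tilde A - B_u M B_u^\top X \tilde A = (I - B_u M B_u^\top X)\,\tilde A$. The standard theorem guarantees the shifted closed-loop matrix $\tilde A - B_u M B_u^\top X \tilde A$ is Schur, which gives item~2. For item~3, the push-through identity gives $(I - B_u M B_u^\top X)^{-1} = I + B_u R^{-1} B_u^\top X$, so the first factor is nonsingular, while $\tilde A$ is nonsingular precisely by assumption (iii); hence $A - B_u K_x$ is a product of two nonsingular matrices and is itself nonsingular.

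The main obstacle I anticipate is bookkeeping rather than concept: carrying out the completion-of-squares rearrangement so that \eqref{eq:DARE} and the shifted DARE coincide term by term, and establishing the closed-loop identity $A - B_u K_x = (I - B_u M B_u^\top X)\tilde A$, since both require repeated use of $B_u^\top X B_u = M^{-1} - R$ and careful tracking of the cross term $S$. Once the shifted data is in place, the role of each hypothesis becomes transparent: (i) and (iv) feed the standard existence theorem, (ii) supplies stabilizability, and (iii) is used only to upgrade the Schur property of item~2 to the nonsingularity of item~3.
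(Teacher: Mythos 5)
Your proposal is correct and follows essentially the same route as the paper: items 1--2 rest on the classical discrete-time Riccati existence theory (the paper cites Corollary 21.13 and Theorem 21.7 of \cite{zhou96} directly, which already accommodate the cross term $S$, whereas you first perform the standard cross-term elimination $\tilde{A}=A-B_uR^{-1}S^\top$, $\tilde{Q}=Q-SR^{-1}S^\top$ and then invoke the zero-cross-term version), and item 3 is obtained in both cases from the identity $A-B_uK_x=(I+B_uR^{-1}B_u^\top X)^{-1}(A-B_uR^{-1}S^\top)$ via the matrix inversion (push-through) lemma together with assumption (iii). The auxiliary verifications you flag --- that the shifted DARE coincides with \eqref{eq:DARE}, that the rank condition (iv) transfers to the shifted pencil and reduces to a unit-circle observability condition on $(\tilde{C}_e,\tilde{A})$, and the closed-loop factorization --- all check out.
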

\begin{proof}
  Statements 1)-2) follow from Corollary 21.13 and Theorem 21.7 of
  \cite{zhou96} (after aligning the notation).  Note that the
  stabilizing solution $X$, if it exists, is such that $(R+B_u^\top XB_u)$
  is nonsingular. The matrix inversion
  lemma can be used to show:
  \begin{align*}
    A-B_u K_x =  (I+B_u R^{-1}B_u^\top X)^{-1} \, (A-B_uR^{-1}S^\top).
  \end{align*}
  Statement 3) follows from this equation and the assumption that
  $(A-B_uR^{-1}S^\top)$ is invertible.
\end{proof}



The special case $S=0$ corresponds to
$C_e := \bsmtx Q^{1/2} \\ 0 \esmtx$ and
$D_{eu} := \bsmtx 0 \\ R^{1/2} \esmtx$. In this case the conditions
(i)-(iv) of Lemma~\ref{lem:DARE} simplify to: (i) $R\succ 0$, (ii)
$(A,B_u)$ stabilizable, (iii) $A$ is nonsingular, and (iv) $(A,Q)$ has
no unobservable modes on the unit circle.  The next theorem constructs
the optimal non-causal controller using the stabilizing solution of the
DARE (again allowing for $S\ne 0$).

\begin{thm}
\label{thm:K0}
Assume $(A,B_u,C_e,D_{eu})$ satisfy conditions $(i)-(iv)$ in
Lemma~\ref{lem:DARE}. Let $X \succeq 0$ be the unique stabilizing
solution of the DARE with corresponding gain $K_x$.

Define a non-causal controller $K^o$ with inputs $(x_t,d_t)$ and
output $u_t^o$ by the following update equations:
\begin{align}
  \label{eq:K0}
  \begin{split}
    v_t & = (A-B_u K_x)^\top ( v_{t+1} + X B_d d_t ), 
    \,\,\, v_\infty=0 \\
    u^o_t & =  -K_x x_t - K_v v_{t+1} - K_d  d_t,
  \end{split}
\end{align}
where 
\begin{align*}
  K_v & := (R+B_u^\top X B_u)^{-1} B_u^\top, \\
  K_d & := (R+B_u^\top X B_u)^{-1} B_u^\top  X B_d.
\end{align*}
Then $J(K^o,d) \le J(K,d)$ for any stabilizing controller $K$ and
disturbance $d\in\ell_2$.
\end{thm}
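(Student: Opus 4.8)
The plan is to prove optimality by a \emph{completion of squares} argument built around the quadratic-plus-linear storage function $V_t := x_t^\top X x_t + 2 v_t^\top x_t$, where $X \succeq 0$ is the stabilizing DARE solution and $v_t$ is the adjoint sequence defined in~\eqref{eq:K0}. The starting observation is that, for any stabilizing controller $K$ and any $d \in \ell_2$, the closed-loop state satisfies $x \in \ell_2$ (so $x_t \to 0$ as $t \to \pm\infty$), and the backward recursion for $v_t$ is driven by $d \in \ell_2$ through the Schur matrix $(A - B_u K_x)^\top$ (Lemma~\ref{lem:DARE}, statement 2), so $v \in \ell_2$ as well. Consequently the telescoping sum $\sum_{t=-\infty}^\infty (V_{t+1} - V_t)$ converges to $V_\infty - V_{-\infty} = 0$. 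This lets me write $J(K,d) = \sum_t \left[ \ell_t + V_{t+1} - V_t \right]$, where $\ell_t$ is the per-stage cost $\bsmtx x_t \\ u_t \esmtx^\top \bsmtx Q & S \\ S^\top & R \esmtx \bsmtx x_t \\ u_t \esmtx$, without changing the value of the sum.

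Next I would substitute the dynamics $x_{t+1} = A x_t + B_u u_t + B_d d_t$ into $V_{t+1}$ and group the modified stage cost $\ell_t + V_{t+1} - V_t$ by the variable in which it is quadratic or linear. The quadratic-in-$u_t$ term is $u_t^\top (R + B_u^\top X B_u) u_t$; writing $M := R + B_u^\top X B_u \succ 0$ and collecting the terms linear in $u_t$ shows that the control-dependent part completes exactly to $(u_t - u_t^o)^\top M (u_t - u_t^o)$, with $u_t^o = -K_x x_t - K_v v_{t+1} - K_d d_t$ as in~\eqref{eq:K0} --- this is where the gains $K_x, K_v, K_d$ are forced, via $M K_v = B_u^\top$ and $M K_d = B_u^\top X B_d$. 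The purely quadratic-in-$x_t$ coefficient, namely $Q + A^\top X A - X$, is eliminated using the DARE~\eqref{eq:DARE}, which gives $Q + A^\top X A - X = K_x^\top M K_x$ and lets the $x_t$ quadratic fold into the completed square.

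The residual terms are those linear in $x_t$, including the bilinear $x_t$-$d_t$ coupling $2 x_t^\top (A - B_u K_x)^\top X B_d d_t$ and the terms $2 x_t^\top (A - B_u K_x)^\top v_{t+1} - 2 x_t^\top v_t$ arising from the linear part of the storage function. I would show these cancel identically \emph{because} $v_t$ satisfies $v_t = (A - B_u K_x)^\top (v_{t+1} + X B_d d_t)$: the $(A - B_u K_x)^\top v_{t+1}$ part kills the adjoint coupling and the $(A - B_u K_x)^\top X B_d d_t$ part kills the bilinear coupling. After these cancellations every surviving term depends only on $d$ (through $d_t$ and the $d$-driven sequence $v$), so the residual $\sum_t R_t(d)$ is independent of $K$. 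This yields the identity $J(K,d) = \sum_{t} (u_t - u_t^o)^\top M (u_t - u_t^o) + \sum_t R_t(d)$; since $M \succ 0$ the first sum is nonnegative and vanishes iff $u_t = u_t^o$ for all $t$, and the control law $u^o$ is admissible (it produces $x \in \ell_2$ since $A - B_u K_x$ is Schur). Evaluating at $K = K^o$ gives $J(K^o, d) = \sum_t R_t(d)$, hence $J(K^o,d) \le J(K,d)$.

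I expect the main obstacle to be rigor at the two-sided infinite horizon rather than the algebra: I must justify that the telescoping sum and all the rearranged sums converge absolutely and that the boundary contributions $V_{\pm\infty}$ genuinely vanish. This rests on establishing $x \in \ell_2$ and $v \in \ell_2$ for every stabilizing $K$ and $d \in \ell_2$, which in turn uses closed-loop stability for $x$ and the Schur property of $A - B_u K_x$ for the backward $v$ recursion. A secondary care point is confirming the DARE identity $Q + A^\top X A - X = K_x^\top M K_x$ together with $M K_v = B_u^\top$ and $M K_d = B_u^\top X B_d$ under the sign conventions in~\eqref{eq:K0}, since a sign slip there would misalign the completed square with the stated $u_t^o$.
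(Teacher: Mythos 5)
Your proposal is correct and follows essentially the same route as the paper's proof: a completion-of-squares argument using the DARE identity $Q+A^\top XA-X=K_x^\top H K_x$, the adjoint recursion for $v_t$ to cancel the terms linear in $x_t$, and a telescoping sum whose terms are exactly your $V_t-V_{t+1}$ with $V_t=x_t^\top X x_t+2v_t^\top x_t$, vanishing by the boundary conditions $x_{-\infty}=0$, $x_t\to 0$, and $v\in\ell_2$. The storage-function packaging is only a cosmetic reorganization of the paper's algebra, and your identified care points (sign conventions in the completed square, $\ell_2$ membership of $x$ and $v$ justifying the two-sided telescoping) are precisely the ones the paper handles.
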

\begin{proof}
  The optimality of the non-causal controller will be shown via
  completion of the squares. First, define
  $H:=R+B_u^\top X B_u\succ 0$ and express the DARE as:
  \begin{align}
    0=X-A^\top X A - Q + K_x^\top H K_x. 
  \end{align}
  Substitute for $Q$ using the DARE to show:
  \begin{align*}
    \bsmtx Q & S \\ S^\top & R \esmtx = 
    \bsmtx K_x^\top \\ I \esmtx H \bsmtx K_x & I \esmtx
    + \bsmtx X & 0 \\ 0 & 0 \esmtx   
    -\bsmtx A^\top \\ B_u^\top \esmtx X \bsmtx A & B_u \esmtx.
  \end{align*}
  Thus the per-step cost achieved by any controller $K$ is:
  \begin{align*}
    & \bmtx x_t \\ u_t \emtx^\top \bmtx Q & S \\ S^\top & R \emtx 
    \bmtx x_t \\ u_t \emtx    
    =  (u_t+K_x x_t)^\top H (u_t +K_x x_t) \\
    & \,\, + x_t^\top X x_t - (A x_t + B_u u_t)^\top X (Ax_t+B_u u_t).
  \end{align*}
  The per-step cost can be rewritten in terms of the input $u^o$
  generated by the non-causal controller $K^o$:
  \begin{align}
    \label{eq:perstep1}
    \begin{split}
    & \bmtx x_t \\ u_t \emtx^\top \bmtx Q & S \\ S^\top & R \emtx 
    \bmtx x_t \\ u_t \emtx    
    = (u_t-u_t^o)^\top H (u_t - u_t^o) \\
    & - (K_d d_t + K_v v_{t+1})^\top H
        (K_d d_t + K_v v_{t+1}) \\
    & - 2 (K_d d_t + K_v v_{t+1})^\top H (u_t +K_x x_t) \\
    & + x_t^\top X x_t  - (A x_t + B_u u_t)^\top X (Ax_t+B_u u_t). 
  \end{split}
  \end{align}
  The terms on the last two lines can be combined and simplified after
  some algebra. There are two key steps in this simplification.  First
  use the dynamics for $x_{t+1}$ to show:
  \begin{align*}
    x_{t+1}^\top X x_{t+1} & = (Ax_t+B_u u_t)^\top X (A x_t + B_u u_t) \\
        & + 2 d_t^\top B_d^\top X (Ax_t+B_u u_t) + d_t^\top B_d^\top X B_d d_t.
  \end{align*}
  Next, use the dynamics for $x_{t+1}$ and $v_t$ to show:
  {\footnotesize
  \begin{align*}
      & v_t^\top x_t - v_{t+1}^\top x_{t+1} + v_{t+1}^\top B_d d_t  \\
      & \,\,\, = (v_{t+1}+XB_d d_t)^\top (A-B_uK_x) x_t 
      - v_{t+1}^\top (Ax_t+B_u u_t) \\
      & \,\,\, = -(K_d d_t + K_v v_{t+1})^\top H (u_t+K_x x_t)
           + d_t^\top B_d^\top X (Ax_t+B_u u_t).
  \end{align*}}

  \noindent
  Use these two results to simplify the last two lines
  of \eqref{eq:perstep1} thus yielding:
  \begin{align}
    \label{eq:perstep2}
    \begin{split}
    & \bmtx x_t \\ u_t \emtx^\top \bmtx Q & S \\ S^\top & R \emtx 
    \bmtx x_t \\ u_t \emtx    
    = (u_t-u_t^o)^\top H (u_t - u_t^o) \\
    & - (K_d d_t + K_v v_{t+1})^\top H (K_d d_t + K_v v_{t+1}) \\
    & + d_t^\top B_d^\top X B_d d_t +2 v_{t+1}^\top B_d d_t \\
    & + (x_t^\top X x_t - x_{t+1}^\top X x_{t+1})
      + 2(v_t^\top x_t - v_{t+1}^\top x_{t+1} ).
    \end{split}
  \end{align}
  Finally, we obtain the cost by summing from $t=-\infty$ to $\infty$.
  The two terms on the last line of \eqref{eq:perstep2} form
  telescoping sums. These telescoping sums equal zero due to the
  assumption that $x_{-\infty}=0$ and the controller $K$ is
  stabilizing so that $x_t \to 0$ as $t\to \infty$.  Thus the cost is:
  \begin{align}
    \label{eq:Jwithu0}
    J(K,d) 
    & = \sum_{t=-\infty}^\infty \left[ (u_t-u_t^o)^\top H (u_t - u_t^o) \right. \\
    \nonumber
    & - (K_d d_t + K_v v_{t+1})^\top H (K_d d_t + K_v v_{t+1}) \\
    \nonumber
    & \left.  + d_t^\top B_d^\top X B_d d_t +2 v_{t+1}^\top B_d d_t \right]. 
  \end{align}
  The terms on the second and third lines only depend on $d$ and not
  the choice of $u$.  Moreover, $H\succ 0$ and the first term is
  minimized by $u_t=u_t^o$.  Hence the non-causal controller $K^o$
  minimizes the cost.
\end{proof}

The minimal cost achieved by the non-causal
controller is obtained by setting $u_t=u_t^o$ in \eqref{eq:Jwithu0}:
\begin{align*}
  J(K^o,d) 
  & = \sum_{t=-\infty}^\infty 
   \left[ - (K_d d_t + K_v v_{t+1})^\top H (K_d d_t + K_v v_{t+1}) \right. \\
  & \left.  + d_t^\top B_d^\top X B_d d_t +2 v_{t+1}^\top B_d d_t \right]. 
\end{align*}
The non-causal dynamics for $v_t$ in \eqref{eq:K0} are stable when
iterated backward from $v_\infty=0$. If $d\in \ell_2$ then
$v\in \ell_2$ and this cost is finite.

\subsection{Output Feedback Control Design}
\label{sec:nomfeas}

We now return to the $(\gamma_d,\gamma_J)$-regret feasibility problem:
Given $(\gamma_d,\gamma_J)$, find a causal, output-feedback controller
$K$ that achieves $(\gamma_d,\gamma_J)$-regret relative to $K^o$ or
verify that this level of regret cannot be achieved. This section
provides a solution to this feasibility problem. Again, we follow the
basic procedure in
\cite{goel22TAC,goel20arXiv,goel22CDC,goel21PMLR,sabag21ACC,sabag21arXiv,sabag22arXiv}
and use a spectral factorization to reduce the problem to an
equivalent $H_\infty$ feasibility problem.

The regret in Definition~\ref{def:nomregret} involves bounding the
cost $J(K,d)$ achieved by a causal $K$ by the following:
\begin{align*}
\gamma_d^2 \, \| d \|_2^2 + \gamma_J^2 \, J(K^o,d) = 
\sum_{t=-\infty}^\infty   \gamma_d^2 \, d_t^\top d_t 
  + \gamma_J^2 \, (e_t^o)^\top e_t^o.
\end{align*}
Here $e^o$ is the error generated by the closed-loop dynamics with the
non-causal controller $K^o$ in~\eqref{eq:K0}. These closed-loop
dynamics are given by:
\begin{align*}
& x_{t+1}  = \hat{A}_{11} \, x_t - B_u K_v \, v_{t+1} +(B_d- B_u K_d) \, d_t, 
\,\,\,\, x_{-\infty} = 0, \\
& \hat{A}_{11}^\top \, v_{t+1} = v_t - \hat{A}_{11}^\top X B_d \, d_t, 
\,\,\,\, v_\infty = 0,  \\
& e_t^o = (C_e - D_{eu} K_x) \, x_t - D_{eu} K_v \, v_{t+1} - D_{eu} K_d \, d_t,
\end{align*}
where $\hat{A}_{11} := A - B_u K_x$ is a Schur, nonsingular matrix
by Lemma~\ref{lem:DARE}.

We can explicitly solve for $v_{t+1}$ in terms of $(v_t,d_t)$ by
inverting $\hat{A}_{11}^\top$ in the second equation. This allows the
closed-loop dynamics with $K^o$ to be expressed in a simpler form.
Specifically, define an augmented state and error as
$\hat{x}_t := \bsmtx x_t \\ v_t \esmtx$ and
$\hat{e}_t := \bsmtx \gamma_J e_t^o \\ \gamma_d d_t \esmtx$.  The
regret in Definition~\ref{def:nomregret} can be written as:
\begin{align}
\label{eq:RegretBnd}
 J(K,d) < \sum_{t=-\infty}^\infty \hat{e}_t^\top \hat{e}_t = \| \hat{e}\|_2^2
\hspace{0.3in} \forall d \in \ell_2, \, d\ne 0,
\end{align}
where the closed-loop dynamics with $K^o$ have the form
\begin{align}
\label{eq:CLK0}
\begin{split}
\hat{x}_{t+1} & = \hat{A} \, \hat{x}_t + \hat{B} \, d_t \\
  \hat{e}_t & = \hat{C}\, \hat{x}_t + \hat{D} \, d_t,
\end{split}
\end{align}
with the state matrices:
\begin{align*}
&\hat{A}:=\bmtx \hat{A}_{11} & -B_u K_v\hat{A}_{11}^{-\top}
\\ 0 & \hat{A}_{11}^{-\top} \emtx,
&& \hspace{-0.1in} \hat{B}:= \bmtx B_d \\ -X B_d \emtx, \\
& \hat{C}:= \gamma_J \,\bmtx C_e-D_{eu}K_x & -D_{eu}K_v\hat{A}_{11}^{-\top} 
      \\ 0 & 0 \emtx,
&& \hat{D}:= \gamma_d \bmtx 0 \\ I \emtx.
\end{align*}
The relation $K_d = K_vXB_d$ is used to obtain these 
simplified expressions.  Let $\hat{P}$ denote the closed-loop
dynamics in \eqref{eq:CLK0}.

The eigenvalues of $\hat{A}_{11}^{-\top}$ lie outside the unit disk.
These eigenvalues are associated with the stable, non-causal dynamics
of the controller $K^o$. Specifically, an input $d\in \ell_2$
generates $v \in \ell_2$ when iterating the non-causal controller
backward from $v_\infty = 0$. The plant dynamics then generate
$x \in \ell_2$ when iterating forward from $x_{-\infty}=0$.  Moreover,
$v,x \in \ell_2$ imply that $v_t\to 0$ as $t\to -\infty$ and
$x_t\to 0$ as $t\to \infty$.  Hence the augmented state satisfies the
boundary conditions $\hat{x}_{-\infty}=0$ and
$\hat{x}_\infty = 0$.\footnote{The use of two-sided signals ensures
  that the augmented state satisfies zero boundary conditions. This
  would not hold if we used one-sided signals. In particular,
  iterating $K^o$ \eqref{eq:K0} backward from
  $v_\infty=0$ could yield $v_0\ne 0$ and hence
  $\hat{x}_0 = \bsmtx 0 \\ v_0 \esmtx \ne 0$. Such nonzero initial
  conditions create additional technical issues that are avoided
  by using two-sided signals.}

In summary, the system \eqref{eq:CLK0} is stable but with causal and
non-causal dynamics.  A spectral factorization can be used to re-write
$\| \hat{e} \|_2^2$ using only stable, causal dynamics.


\begin{lemma}
\label{lem:RegretSF}
Assume that $(A,B_u,C_e,D_{eu})$ satisfy conditions $(i)-(iv)$ in
Lemma~\ref{lem:DARE} so that the DARE \eqref{eq:DARE} has a
stabilizing solution $X\succeq 0$. This stabilizing solution is used
to define the dynamics $\hat{P}$ from $d_t$ to $\hat{e}_t$ in
\eqref{eq:CLK0}.

If $\gamma_d>0$ and $(\hat{A}_{11}^{-\top},X B_d)$ is stabilizable
then there exists a square $n_d\times n_d$ LTI system $F$ such that:
(i) $\| \hat{e}\|_2^2 = \| F d\|_2^2$ for all $d \in \ell_2$, (ii) $F$
is stable, causal, and invertible, and (iii) $F^{-1}$ is square,
stable and causal.  Moreover, it is possible to construct $F$ with
state dimension $n_x$.
\end{lemma}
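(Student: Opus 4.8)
The plan is to recognize the weighted cost as a positive, para-Hermitian quadratic form in $d$ and to obtain $F$ as its canonical (minimum-phase) spectral factor. Let $G^o$ denote the closed-loop transfer matrix from $d$ to $e^o$, so that the two block rows of $\hat P$ in \eqref{eq:CLK0} are $\gamma_J G^o$ and $\gamma_d I_{n_d}$. By Parseval's theorem, for every $d\in \ell_2$,
\[
  \| \hat e\|_2^2 = \frac{1}{2\pi}\int_0^{2\pi} D(e^{j\theta})^* \Phi(e^{j\theta})\, D(e^{j\theta})\, d\theta,
  \qquad \Phi(e^{j\theta}) := \gamma_d^2 I_{n_d} + \gamma_J^2\, G^o(e^{j\theta})^* G^o(e^{j\theta}),
\]
where $D$ is the transform of $d$. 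Since $\gamma_d>0$, the symbol satisfies $\Phi(e^{j\theta})\succeq \gamma_d^2 I_{n_d}\succ 0$ for all $\theta$. The lemma therefore reduces to finding a square $n_d\times n_d$ system $F\in\RH$ with $F^{-1}\in\RH$ such that $F(e^{j\theta})^* F(e^{j\theta}) = \Phi(e^{j\theta})$: a second application of Parseval then gives property (i), $\| \hat e\|_2^2 = \|Fd\|_2^2$, while $F,F^{-1}\in\RH$ are precisely properties (ii)--(iii).

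Existence of such a factor is the classical discrete-time spectral factorization theorem: a rational, para-Hermitian matrix function that is positive definite on the unit circle admits a factorization $\Phi = F^\sim F$ with both $F$ and $F^{-1}$ stable and causal. Operationally, I would take a state-space realization of $\Phi$, form the associated discrete-time Riccati equation, and read $F$ off its stabilizing solution; strict positivity of $\Phi$ on the circle is exactly what guarantees the factored feedthrough term is nonsingular and hence that $F^{-1}$ is again stable and causal. This step is routine.

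The substantive claim is the bound $n_x$ on the McMillan degree of $F$, since the obvious realization inherited from the $2n_x$-state system $\hat P$ would only give degree $2n_x$. The reduction comes from a reciprocal symmetry of the augmented dynamics. By Lemma~\ref{lem:DARE}, $\hat A$ is block-triangular with stable spectrum equal to the eigenvalues of $\hat A_{11}$ (inside the unit disk) and antistable spectrum equal to those of $\hat A_{11}^{-\top}$, i.e. the reciprocals $\{1/\lambda : \lambda\in\mathrm{spec}(\hat A_{11})\}$ (outside the disk). Thus $G^o$ has stable poles $\mathrm{spec}(\hat A_{11})$ and antistable poles $\{1/\lambda\}$. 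Forming $\Phi$, the para-conjugate reflects each pole $\mu\mapsto 1/\mu$, so the reflected antistable poles land back on $\mathrm{spec}(\hat A_{11})$ (coinciding with the stable poles already present) and the reflected stable poles land on the antistable locations. Consequently the poles of $\Phi$ inside the unit disk are exactly $\mathrm{spec}(\hat A_{11})$, a set of cardinality $n_x$ disjoint from the unit circle (since $\hat A_{11}$ is Schur). Because the minimum-phase factor $F$ has precisely the stable poles of $\Phi$, it admits a realization with $n_x$ states. The hypothesis that $(\hat A_{11}^{-\top},XB_d)$ is stabilizable---equivalently, controllable, as $\hat A_{11}^{-\top}$ is antistable---ensures the antistable dynamics are excited by $d$ with full multiplicity, so this degree is attained without hidden cancellations.

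The main obstacle is this dimension count: positivity and mere existence of a factor are standard, whereas showing the minimal factor has exactly $n_x$ (rather than $2n_x$) states rests on the stable/antistable reciprocal pairing of $\hat A_{11}$ and $\hat A_{11}^{-\top}$. To make it fully rigorous I would avoid the pole-counting heuristic and instead construct $F$ directly from an $n_x$-dimensional Riccati variable and verify $F^\sim F=\Phi$ through a completion-of-squares identity in the spirit of the proof of Theorem~\ref{thm:K0}.
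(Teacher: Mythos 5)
Your existence argument for properties (i)--(iii) is fine and is essentially the standard route (Parseval plus the classical spectral factorization theorem for a rational para-Hermitian symbol that is positive definite on the unit circle; the paper instead does this via an operator-adjoint identity and an explicit Riccati construction in Lemma~\ref{lem:DTSF}, which yields the same conclusion). The genuine gap is in the state-dimension claim, which you correctly identify as the substantive part but do not actually prove. Your pole-counting argument tracks only the \emph{locations} of the poles of $\Phi=\gamma_d^2 I+\gamma_J^2 (G^o)^\sim G^o$, not their multiplicities in the McMillan-degree sense. Both $G^o$ and $(G^o)^\sim$ contribute stable poles at $\mathrm{spec}(\hat{A}_{11})$ (the latter as reflections of the antistable poles of $G^o$), so a priori the stable part of $\Phi$ has degree up to $2n_x$, and the minimum-phase factor would then need $2n_x$ states --- exactly the bound you are trying to beat. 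Showing that the multiplicities do \emph{not} add, i.e.\ that $n_x$ worth of cancellations occur when the product is formed, is the whole content of the claim, and ``coinciding with the stable poles already present'' does not establish it. Your closing sentence concedes this by deferring to ``an $n_x$-dimensional Riccati variable and a completion-of-squares identity,'' which is precisely the step that is missing.

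For comparison, the paper closes this gap by showing that the stabilizing solution of the $2n_x$-dimensional DARE \eqref{eq:DAREXhat} has the special structure $\hat{X}=\gamma_J^2\bsmtx X & I \\ I & X^{-1}\esmtx + \bsmtx 0 & 0 \\ 0 & V \esmtx$ with $V\succeq 0$ solving an auxiliary $n_x$-dimensional DARE. This forces the factor's gain $\hat{K}_x=\hat{H}^{-1}\hat{B}^\top\hat{X}\hat{A}$ to have a zero first block, so the first $n_x$ states of the $2n_x$-state factor are unobservable and can be deleted. This construction requires $X$ to be nonsingular, and the stabilizability hypothesis on $(\hat{A}_{11}^{-\top},XB_d)$ is used (via a PBH/invariant-subspace argument) precisely to prove $X\succ 0$ --- not, as you suggest, to rule out ``hidden cancellations'' that would make the degree \emph{smaller} than $n_x$ (the lemma only asserts an upper bound on the state dimension, so that direction is not at issue). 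You would need to supply both the structured Riccati solution and the nonsingularity of $X$ to complete the proof.
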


It is important to emphasize that $K^o$ is non-causal. Hence the
closed-loop \eqref{eq:CLK0} from $d$ to $\hat{e}$ has both casual and
non-causal dynamics.  The spectral factorization theorem does not
state that $\hat{e}$ can be computed causally. Rather, it states that
the cost $\|\hat{e}\|_2^2$ can be equivalently computed as
$\|F d\|_2^2$ where $F$ is stable and causal. 

The result is a restatement of Lemma~\ref{lem:RegretSFVer2} which is
stated and proved in Appendix~\ref{app:SFforRegret}.  The proof mostly
follows from existing spectral factorization results that are reviewed
in Appendix~\ref{app:DTSF}. The one new aspect of
Lemma~\ref{lem:RegretSF} is to show that the spectral factor $F$ can
be constructed with state dimension $n_x$.  This requires some
additional technical arguments as $\hat{P}$ in \eqref{eq:CLK0} has
state dimension $2n_x$. An explicit state-space construction for
the spectral factorization is given by Lemma~\ref{lem:DTSF}
in Appendix~\ref{app:DTSF} and Lemma~\ref{lem:RegretSFVer2} in 
Appendix~\ref{app:SFforRegret}.

Lemma~\ref{lem:RegretSF} assumes $\gamma_d>0$. This assumption is not
satisfied by competitive control which uses
$(\gamma_d,\gamma_J)=(0,\gamma_C)$. In fact, a spectral factorization
also exists in the case of competitive control (under a weaker set of
assumptions). This requires additional technical details because
$\gamma_d=0$ corresponds to a singular control problem
($\hat{R} = \hat{D}^\top \hat{D} = 0$). These additional details
are omitted.


We now show that the $(\gamma_d,\gamma_J)$-regret feasibility problem
can be reduced to an $H_\infty$ feasibility problem.  The regret in
Definition~\ref{def:nomregret} can be written as in
\eqref{eq:RegretBnd}: $J(K,d) < \| \hat{e}\|_2^2$ for all nonzero
$d \in \ell_2$.  The left side $J(K,d)$ is equal to the closed-loop
cost $\|e\|_2^2 = \|F_L(P,K) d\|_2^2$ achieved with controller $K$
(Figure~\ref{fig:FLPK}). The right side $\| \hat{e}\|_2^2$ is equal to
$\| F d \|_2^2$ by Lemma~\ref{lem:RegretSF}. The spectral factor $F$
depends on the choice of $(\gamma_d,\gamma_J)$ and is
stable with a stable inverse.  Define $\hat{d}=Fd$ so that
$d=F^{-1}\hat{d}$.  The set of all $d\in \ell_2$ maps 1-to-1 on the
set of all $\hat{d}\in \ell_2$.  Thus we can rewrite the regret bound
as
\begin{align}
  \label{eq:nomregretF}
  \|F_L(P,K) F^{-1} \hat{d} \|_2^2  < \| \hat{d} \|_2^2
  \,\,\, \forall \hat{d} \in \ell_2, \, \hat{d}\ne 0.
\end{align}
The system $F_L(P,K)F^{-1}$ corresponds to the closed-loop $F_L(P,K)$
weighted by the spectral factor inverse $F^{-1}$.  This weighted
closed-loop is shown graphically in Figure~\ref{fig:NomRegretIC}.
Equation~\ref{eq:nomregretF} corresponds to a unit bound on the
closed-loop $H_\infty$ norm, i.e. $\|F_L(P,K) F^{-1}\|_\infty < 1$.
This yields the following main result connecting the regret and
$H_\infty$ feasibility problems.

\begin{thm}
  \label{thm:regnomfeas}
  A controller $K$ achieves $(\gamma_d,\gamma_J)$-regret relative to
  the optimal non-causal controller $K^o$ if and only if
  $F_L(P,K)F^{-1}$ is stable and 
  $\|F_L(P,K) F^{-1}\|_\infty <1$.
\end{thm}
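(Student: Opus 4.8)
The plan is to assemble the equivalence directly from the pieces already in place, since the substitution argument immediately preceding the statement carries essentially all of the content. First I would restate the regret condition of Definition~\ref{def:nomregret} in the form \eqref{eq:RegretBnd}: namely, $F_L(P,K)$ stable together with $J(K,d) < \|\hat{e}\|_2^2$ for all nonzero $d \in \ell_2$, where $\hat{e}$ is the output of the closed loop $\hat{P}$ in \eqref{eq:CLK0}. On the left, $J(K,d) = \|e\|_2^2 = \|F_L(P,K)\,d\|_2^2$ is the closed-loop cost realized by the causal controller $K$. On the right, Lemma~\ref{lem:RegretSF} (under its hypotheses $\gamma_d>0$ and stabilizability of $(\hat{A}_{11}^{-\top}, XB_d)$) supplies a square, stable, causal, invertible spectral factor $F$ with stable causal inverse satisfying $\|\hat{e}\|_2^2 = \|Fd\|_2^2$ for every $d \in \ell_2$. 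Substituting both identities recasts the regret inequality as $\|F_L(P,K)\,d\|_2^2 < \|Fd\|_2^2$ for all nonzero $d$.

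Next I would perform the change of variables $\hat{d} := Fd$, so that $d = F^{-1}\hat{d}$. Because $F$ and $F^{-1}$ are both stable and causal, the map $d \mapsto Fd$ is a bijection of $\ell_2$ onto itself fixing the zero sequence, so quantifying over all nonzero $d \in \ell_2$ is identical to quantifying over all nonzero $\hat{d} \in \ell_2$. This converts the inequality into \eqref{eq:nomregretF}, $\|F_L(P,K)F^{-1}\hat{d}\|_2^2 < \|\hat{d}\|_2^2$ for all nonzero $\hat{d} \in \ell_2$. The stability clause is handled by the same composition: since $F$ and $F^{-1}$ are stable and causal, $F_L(P,K)F^{-1}$ is stable if and only if $F_L(P,K)$ is stable, recovering $F_L(P,K) = \left( F_L(P,K)F^{-1} \right) F$ by composing with $F$. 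Hence the stability requirement in Definition~\ref{def:nomregret} matches the stability requirement in the theorem in both directions.

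The final step is to identify the strict, all-$\hat{d}$ inequality with the strict $H_\infty$ bound $\|F_L(P,K)F^{-1}\|_\infty < 1$, using that for a stable LTI system the induced $\ell_2$-gain equals the $H_\infty$-norm. The direction $\|F_L(P,K)F^{-1}\|_\infty < 1 \Rightarrow \eqref{eq:nomregretF}$ is immediate from $\|G\hat{d}\|_2 \le \|G\|_\infty \|\hat{d}\|_2$ applied to $G = F_L(P,K)F^{-1}$. I expect the converse to be the delicate point, and the main obstacle in the argument. The pointwise strict inequality over all $\hat{d}\in\ell_2$ a priori only forces the induced norm to be $\le 1$, since the supremum defining $\|G\|_{2\to 2}$ need not be attained by any single $\ell_2$ sequence (e.g. a system whose frequency response reaches its peak singular value only at isolated frequencies). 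I would close this gap exactly as the $H_\infty$ feasibility problem is treated earlier in Section~\ref{sec:nomprob}, invoking continuity of $\bar{\sigma}\!\left(G(e^{j\theta})\right)$ on the compact interval $[0,2\pi]$ and the equality $\|G\|_{2\to 2}=\|G\|_\infty$ to identify the strict regret condition with the strict norm bound. This frequency-domain identification, rather than any new computation, is the crux of the proof; everything else is the bookkeeping of the spectral-factor substitution.
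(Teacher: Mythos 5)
Your proposal follows the paper's own argument essentially verbatim: the paper proves Theorem~\ref{thm:regnomfeas} in the discussion immediately preceding its statement, by rewriting the regret bound in the form \eqref{eq:RegretBnd}, identifying the left side as $\|F_L(P,K)d\|_2^2$ and the right side as $\|Fd\|_2^2$ via Lemma~\ref{lem:RegretSF}, and making the bijective substitution $\hat{d}=Fd$; your treatment of the stability clause matches the remark the paper makes immediately after the theorem. The one point where you depart from the paper is in isolating the step from the all-$\hat{d}$ strict inequality \eqref{eq:nomregretF} to the strict norm bound $\|F_L(P,K)F^{-1}\|_\infty<1$ as the crux. You are right to be suspicious of it, but your proposed repair does not work: continuity of $\bar{\sigma}(G(e^{j\theta}))$ on the compact interval $[0,2\pi]$ guarantees that the maximum over frequency is attained, not that the induced-norm supremum over $\ell_2$ inputs is attained by a signal, and the implication genuinely fails. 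For the scalar stable system $G(z)=\tfrac{1}{2}(1+z^{-1})$ one has $|G(e^{j\theta})|=|\cos(\theta/2)|<1$ for almost every $\theta$, hence $\|Gd\|_2<\|d\|_2$ for every nonzero $d\in\ell_2$, yet $\|G\|_\infty=1$. So the strict all-$\hat{d}$ inequality only yields $\|F_L(P,K)F^{-1}\|_\infty\le 1$, and the ``only if'' direction of the theorem as literally stated has a boundary case that neither your argument nor the paper's closes. The paper commits the same elision when it asserts the analogous equivalence for plain $H_\infty$ synthesis in Section~\ref{sec:nomprob}, so your proof is no weaker than the paper's; a fully rigorous statement would either relax that direction of the conclusion to $\|F_L(P,K)F^{-1}\|_\infty\le 1$ or build a uniform gap into Definition~\ref{def:nomregret}. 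Everything else in your write-up is correct and is exactly the paper's route.
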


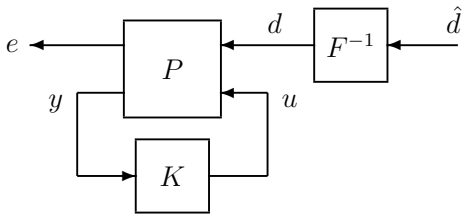
\begin{figure}[h]
\centering
\scalebox{0.9}{
\begin{picture}(200,95)(23,20)
 \thicklines
 \put(75,65){\framebox(40,40){$P$}}
 \put(135,100){$d$}
 \put(155,95){\vector(-1,0){40}}  
 \put(155,80){\framebox(30,30){$F^{-1}$}}
 \put(210,100){$\hat{d}$}
 \put(215,95){\vector(-1,0){30}}  
 \put(25,92){$e$}
 \put(75,95){\vector(-1,0){40}}  
 \put(80,25){\framebox(30,30){$K$}}
 \put(43,69){$y$}
 \put(55,75){\line(1,0){20}}  
 \put(55,75){\line(0,-1){35}}  
 \put(55,40){\vector(1,0){25}}  
 \put(141,69){$u$}
 \put(135,40){\line(-1,0){25}}  
 \put(135,40){\line(0,1){35}}  
 \put(135,75){\vector(-1,0){20}}  
\end{picture}
} 
\caption{Feedback interconnection $F_L(P,K)F^{-1}$ for regret feasibility.}
\label{fig:NomRegretIC}
\end{figure}

Note that $F^{-1}$ is stable. Hence the controller $K$ stabilizes
$F_L(P,K) F^{-1}$ if and only if $K$ stabilizes $F_L(P,K)$.  There are
well-known results for (sub-optimal) $H_\infty$ synthesis as
summarized in Section~\ref{sec:nomprob}. For example, Theorem 5.2 in
\cite{gahinet94} gives necessary and sufficient conditions to
synthesize a stabilizing controller $K$ such that
$\|F_L(P,K) F^{-1}\|_\infty <1$. These conditions are given in terms
of linear matrix inequalities (LMIs).  No such controller exists if
the LMIs are infeasible.  The only additional assumptions on the
plant $P$ in \eqref{eq:Pic} are that $(A,B_u)$ stabilizable and
$(A,C_y)$ detectable. These are the minimal assumptions required for a
stabilizing controller to exist.  Riccati equation solutions can be
developed either directly in
discrete-time~\cite{iglesias91,limebeer89,stoorvogel90} or mapped, via
the bilinear transform, via solutions for the continuous-time
problem~\cite{dgkf89,zhou96}.  Both of these Riccati 
solutions require additional technical assumptions on the plant
beyond stabilizability and detectability of $(A,B_u,C_y)$.


In summary, we can construct a controller that achieves
$(\gamma_d,\gamma_J)$-regret relative to the non-causal controller
$K^o$ (or determine that one does exist) by the following steps: 

\begin{enumerate}
\item Construct the optimal non-causal controller as
in Theorem~\ref{thm:K0}.

\item Construct the spectral factor $F$ for the given
$(\gamma_d,\gamma_J)$ based on Lemma~\ref{lem:RegretSF} and
Appendix~\ref{app:DTSF}, and

\item Use existing $H_\infty$ synthesis methods~\cite{doyle89} to find
  a stabilizing controller $K $ such that
  $\|F_L(P,K) F^{-1}\|_\infty <1$ (or determine that one does not
  exist).
\end{enumerate}

\noindent
The resulting controller $K$ is dynamic, in general, and takes
measurements $y$ to produce control commands $u$.  The weighted plant
used for synthesis has state dimension $2n_x$ because both $P$ and
$F^{-1}$ have state dimension $n_x$.  It is known that if the
$H_\infty$ problem is feasible then the dynamic controller $K$ can
constructed with the same state-dimension as the weighted plant,
i.e. $K$ can be constructed with order $2n_x$.

Finally, we comment on the special case of full-information control.
This corresponds to the case where the controller can directly measure
the disturbance and plant state: $y_t = \bsmtx x_t \\ d_t \esmtx$.
Full-information regret-based control has been studied extensively, e.g.
\cite{hassibi99,sabag21ACC,sabag21arXiv,goel22TAC,goel21arXiv}.\footnote{These
  previous works used the per-step cost
  $x_t^\top Q x_t + u_t^\top R x_t$.  This corresponds to the case
  $S=0$ in the cost function~\eqref{eq:Jqsr}.
  Theorem~\ref{thm:regnomfeas} can be applied for more general
  per-step costs with $S\ne 0$.}  The controller directly measures
$d_t$ in the full-information problem.  The measurement of $d_t$ can
be used by the controller to perfectly reconstruct $\hat{d}_t$ via
$\hat{d}=Fd$. Let $\hat{x}_t$ denote the state of $F$ evolving with
input $d$. This is equal to the state of $F^{-1}$ evolving with input
$\hat{d}$. Thus the full-information $H_\infty$ synthesis problem can
be equivalently solved using
$\bsmtx x_t^\top & \hat{x}_t^\top & \hat{d}_t^\top \esmtx^\top$ as the
measurement.  Full-information $H_\infty$ synthesis is solved by a
static (memoryless) controller:
\begin{align}
  \label{eq:FIregretK}
  u_t = -K_x^{FI}  x_t - K_{\hat{x}}^{FI} \hat{x}_t - K_{\hat{d}}^{FI} \hat{d}_t.
\end{align}
Details on the construction of this controller for discrete-time case
can be found in Chapter 9 of \cite{stoorvogel90} or Appendix B of
\cite{green12}.  This controller can be implemented, in real-time, by
using the measurement of $d$ to compute $\hat{d}=Fd$ and the state
$\hat{x}_t$. Thus the full-information, $(\gamma_d,\gamma_J)$-regret
problem is solved by a controller that combines the dynamics
$F$ with the static update in \eqref{eq:FIregretK}.  This is
a dynamic controller, due to the dependence on $F$, with state
dimension $n_x$.


\section{Robust Regret Optimal Control}
\label{sec:robregret}

\subsection{Problem Formulation}
\label{sec:robprob}

Consider the feedback interconnection shown in
Figure~\ref{fig:FLPKUnc}.  This interconnection, denoted
$CL(P,K,\Delta)$, includes an uncertainty $\Delta$ and controller $K$
wrapped around the upper and lower channels of the plant $P$,
respectively.  This is a standard feedback diagram in the robust
control literature for the case of uncertain systems, e.g. see Chapter
11 of \cite{zhou96} or Chapter 8 of \cite{dullerud99}.  The plant $P$
is a discrete-time, LTI system with additional input/output channels
$(w,v)$ to incorporate the effect of the uncertainty:
\begin{align}
  \label{eq:Punc}
   \bmtx x_{t+1}\\ v_t \\ e_t \\ y_t\emtx =
   \bmtx A & B_w & B_d & B_u\\
   C_v & D_{vw} & D_{vd} & D_{vu} \\
   C_e & 0 & 0 & D_{eu} \\
   C_y & D_{yw} & D_{yd} & 0  \\
   \emtx \bmtx x_t \\ w_t \\ d_t \\u_t \emtx,
\end{align}
where $w_t \in \R^{n_w}$ and $v_t \in \R^{n_v}$ are the input/output
signals associated with the uncertainty.  The other signals have
similar interpretations as given for the nominal case in
Section~\ref{sec:nomprob}.


\begin{figure}[h]
\centering
\scalebox{0.85}{
\begin{picture}(140,126)(23,22)
 \thicklines
 \put(75,65){\framebox(40,40){$P$}}
 \put(160,82){$d$}
 \put(155,85){\vector(-1,0){40}}  
 \put(25,82){$e$}
 \put(75,85){\vector(-1,0){40}}  
 \put(80,22){\framebox(30,30){$K$}}
 \put(43,67){$y$}
 \put(55,72){\line(1,0){20}}  
 \put(55,72){\line(0,-1){35}}  
 \put(55,37){\vector(1,0){25}}  
 \put(141,67){$u$}
 \put(135,37){\line(-1,0){25}}  
 \put(135,37){\line(0,1){35}}  
 \put(135,72){\vector(-1,0){20}}  
 \put(80,118){\framebox(30,30){$\Delta$}}
 \put(43,98){$v$}
 \put(55,98){\line(1,0){20}}  
 \put(55,98){\line(0,1){35}}  
 \put(55,133){\vector(1,0){25}}  
 \put(141,98){$w$}
 \put(135,133){\line(-1,0){25}}  
 \put(135,133){\line(0,-1){35}}  
 \put(135,98){\vector(-1,0){20}}  
\end{picture}
} 
\caption{Feedback interconnection $CL(P,K,\Delta)$ for robust synthesis.} 
\label{fig:FLPKUnc}
\end{figure}
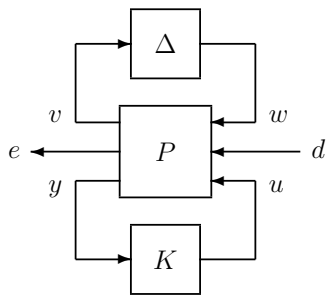

Let $\mathbf{\Delta}$ denote the set of $n_w\times n_v$, discrete-time
LTI systems that are causal, stable, and have $\|\Delta\|_\infty \le 1$.
Thus each $\Delta \in \mathbf{\Delta}$ has a state-space representation:
\begin{align}
   \bmtx x^\Delta_{t+1}\\ w_t  \emtx =
   \bmtx A_\Delta & B_\Delta \\
      C_\Delta & D_\Delta \emtx
     \bmtx x^\Delta_t \\ v_t \emtx.
\end{align}
We only assume that each $\Delta \in \mathbf{\Delta}$ has a
finite-dimensional state but the state dimension is arbitrary. This is
referred to as unstructured uncertainty in the robust control
literature \cite{zhou96}. The results in this paper can be extended to
LTI uncertainty with infinite state dimension but this requires
additional technical machinery.


Let $F_U(P,\Delta)$ denote the system obtained by closing
$\Delta \in \mathbf{\Delta}$ around the top channels of
$P$.\footnote{The system $F_L( F_U(P,\Delta), K)$ corresponds to
  closing a controller $K$ around the bottom channels of
  $F_U(P,\Delta)$.  Thus $F_L( F_U(P,\Delta), K)$ is equivalent to the
  shorter notation $CL(P,K,\Delta)$.}  The ``nominal'' (or best-guess)
model is obtained with $\Delta = 0$. The results in
Section~\ref{sec:nomregret} can be used to construct a controller that
achieves $(\gamma_d,\gamma_J)$-regret on the nominal model $F_U(P,0)$.
In this section we focus on designing a single controller that
achieves a certain level of regret for all possible models in
the set
$\mathcal{M}:=\{ F_U(P,\Delta) \, : \, \Delta \in \mathbf{\Delta} \}$.

We first need to concretely define several notions used in this robust
design.  The closed-loop system $CL(P,K,\Delta)$ in
Figure~\ref{fig:FLPKUnc} is said to be well-posed if the dynamics have
a unique solution for any disturbance $d \in \ell_2$ starting from
zero initial conditions for $P$ and $\Delta$, i.e. $x_{-\infty}=0$ and
$x^\Delta_{-\infty}=0$. In this case, the cost achieved by a
controller $K$ with a disturbance $d \in \ell_2$ and uncertainty
$\Delta\in\mathbf{\Delta}$ is:
\begin{align}
  \label{eq:JKdDelta}
  J(K,d,\Delta):= \| e\|_2^2 = \sum_{t=-\infty}^\infty e_t^\top e_t.
\end{align}
Our baseline for performance will be the optimal non-causal controller
$K^o$ designed for the nominal model $F_U(P,0)$. This baseline
controller can be constructed from the results in
Section~\ref{sec:K0}.  The cost achieved by $K^o$ on the nominal model
with disturbance $d$ is $J(K^o,d,0)$.  We define the regret relative
to this baseline cost.


\begin{defin}
\label{def:robregret}
Let $\gamma_d \ge 0$ and $\gamma_J \ge 0$ be given.  A controller $K$
achieves robust $(\gamma_d,\gamma_J)$-regret relative to the optimal
non-causal controller $K^o$ (designed on the nominal model) if
$CL(P,K,\Delta)$ is well-posed and stable for all
$\Delta \in \mathbf{\Delta}$, and:
\begin{align}
  \label{eq:robregret}
  \begin{split}
  J(K,d,\Delta) < \gamma_d^2 \, \| d \|_2^2 + \gamma_J^2 \, J(K^o,d,0)
  \\ \forall d \in \ell_2, \, d\ne 0 \,\,
  \mbox{ and } \,\, \forall \Delta \in \mathbf{\Delta}.
  \end{split}
\end{align}
\end{defin}

Section~\ref{sec:robfeas} provides a method to solve the following
robust $(\gamma_d,\gamma_J)$-regret feasibility problem: Given
$(\gamma_d,\gamma_J)$, find a causal, output-feedback controller $K$
that achieves robust $(\gamma_d,\gamma_J)$-regret relative to $K^o$ or
verify that this level of robust regret cannot be achieved. Robust
$(\gamma_d,0)$-regret is a special case of $\mu$-synthesis feasibility
problem \cite{doyle85,doyle87,balas94,lind94,packard93asme}. We have
restricted our attention to unstructured uncertainty
$\Delta \in\mathbf{\Delta}$. More general block-structured uncertainty
is considered in the standard $\mu$ synthesis problem
\cite{safonov80,doyle82,packard93,zhou96,dullerud99}. The synthesis
method in Section~\ref{sec:robfeas} can be adapted to these more
general block-structured uncertainties. We will comment later on the
changes required for more general block structures.


\subsection{Output Feedback Control Design}
\label{sec:robfeas}

We first show that the robust $(\gamma_d,\gamma_J)$-regret feasibility
condition can be reduced to a robust $H_\infty$ condition.  The bound
on the right side of the robust regret condition \eqref{eq:robregret}
does not depend on the uncertainty.  Following the same arguments as
in Section~\ref{sec:nomfeas}, this bound can be written as
$\| \hat{e} \|_2^2$ where $\hat{e}$ is the output of the system
\eqref{eq:CLK0}. Moreover, $\| \hat{e}\|_2^2$ is equal to
$\| F d \|_2^2$ by Lemma~\ref{lem:RegretSF} where $F$ is a spectral
factor that depends on the choice of $(\gamma_d,\gamma_J)$.  Thus the
robust $(\gamma_d,\gamma_J)$-regret in Definition~\ref{def:robregret}
can be written as:
\begin{align*}
  J(K,d,\Delta) < \| F d \|_2^2
  \hspace{0.2in} \forall d \in \ell_2, \, d\ne 0 \,\,
  \mbox{ and } \,\, \forall \Delta \in \mathbf{\Delta}.
\end{align*}
The left side $J(K,d,\Delta)$ is equal to the closed-loop cost
$\|e\|_2^2 = \|CL(P,K,\Delta) \, d\|_2^2$ achieved with controller $K$
and uncertainty $\Delta \in \mathbf{\Delta}$.
(Figure~\ref{fig:FLPKUnc}).  The spectral factor $F$ is stable with a
stable inverse.  Define $\hat{d}=Fd$ so that $d=F^{-1}\hat{d}$.  The
set of all $d\in \ell_2$ maps 1-to-1 on the set of all $\hat{d}\in \ell_2$.
Thus we can rewrite the robust regret bound as
\begin{align}
  \label{eq:robregretF}
  \begin{split}
    \|CL(P,K,\Delta) F^{-1} \hat{d} \|_2^2  < \| \hat{d} \|_2^2 \\
    \hspace{0.2in} \forall \hat{d} \in \ell_2, \, \hat{d}\ne 0 \,\,
    \mbox{ and } \,\, \forall \Delta \in \mathbf{\Delta}.
  \end{split}
\end{align}
The system $CL(P,K,\Delta)F^{-1}$ corresponds to the closed-loop
$CL(P,K,\Delta)$ with the disturbance channel weighted by $F^{-1}$.
Thus \eqref{eq:robregretF} corresponds to a robust performance
condition on $CL(P,K,\Delta)F^{-1}$ that must hold for all
uncertainties $\Delta \in \mathbf{\Delta}$. This yields the following
main result connecting the robust regret to the robust $H_\infty$
condition.

\begin{thm}
  \label{thm:regrobfeas}
  A controller $K$ achieves robust $(\gamma_d,\gamma_J)$-regret
  relative to the optimal non-causal controller $K^o$ if and only if
  $CL(P,K,\Delta) F^{-1}$ is well-posed, stable and 
  $\|CL(P,K,\Delta) F^{-1}\|_\infty <1$ for all $\Delta \in \mathbf{\Delta}$.
\end{thm}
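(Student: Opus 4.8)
The plan is to mirror the proof of the nominal result (Theorem~\ref{thm:regnomfeas}), exploiting the key structural fact that the right-hand bound in Definition~\ref{def:robregret}, namely $\gamma_d^2 \|d\|_2^2 + \gamma_J^2 J(K^o,d,0)$, depends only on the nominal plant ($\Delta=0$) and not on the uncertainty. This $\Delta$-independence means the spectral factorization of Section~\ref{sec:nomfeas} carries over verbatim, so that the entire family of regret inequalities indexed by $\Delta$ can be converted using a single, $\Delta$-independent weighting $F^{-1}$.

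First I would rewrite the baseline bound exactly as in the nominal analysis: following the arguments preceding Lemma~\ref{lem:RegretSF}, the quantity $\gamma_d^2 \|d\|_2^2 + \gamma_J^2 J(K^o,d,0)$ equals $\|\hat{e}\|_2^2$, where $\hat{e}$ is the output of the nominal closed-loop system \eqref{eq:CLK0}. Invoking Lemma~\ref{lem:RegretSF} then furnishes a stable, causal, invertible spectral factor $F$ (with stable, causal inverse) satisfying $\|\hat{e}\|_2^2 = \|Fd\|_2^2$ for all $d \in \ell_2$. Because this factorization is built entirely from the nominal closed-loop, the same $F$ serves for every $\Delta \in \mathbf{\Delta}$.

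Next, for a fixed $\Delta \in \mathbf{\Delta}$ for which $CL(P,K,\Delta)$ is well-posed and stable, the left side is $J(K,d,\Delta) = \|e\|_2^2 = \|CL(P,K,\Delta)\,d\|_2^2$. The robust regret condition \eqref{eq:robregret} thus reads $\|CL(P,K,\Delta)\,d\|_2^2 < \|Fd\|_2^2$ for all nonzero $d$ and all $\Delta$. I would then substitute $d = F^{-1}\hat{d}$; since $F$ and $F^{-1}$ are both stable and causal, this map is a bijection of $\ell_2$ onto itself with $d\ne 0 \iff \hat{d}\ne 0$, yielding $\|CL(P,K,\Delta)F^{-1}\hat{d}\|_2^2 < \|\hat{d}\|_2^2$ for all nonzero $\hat{d}$ and all $\Delta$. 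Since $CL(P,K,\Delta)F^{-1}$ is LTI and stable, its induced $\ell_2$-gain equals its $H_\infty$ norm, so this inequality is equivalent to $\|CL(P,K,\Delta)F^{-1}\|_\infty < 1$ for every $\Delta \in \mathbf{\Delta}$.

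The hard part will be establishing that the well-posedness and stability clauses match up uniformly over $\Delta$, i.e. that $CL(P,K,\Delta)$ is well-posed and stable for all $\Delta$ if and only if $CL(P,K,\Delta)F^{-1}$ is. The argument is that $F^{-1}$ is a fixed stable, causal LTI system cascaded only on the exogenous disturbance channel; it does not enter either the $\Delta$-loop or the $K$-loop, so it cannot affect well-posedness of the interconnection. Stability is preserved in both directions because cascading a stable system onto the input of a stable interconnection yields a stable map, and the reverse cascade by the stable $F$ recovers the original --- this is exactly the observation made after Theorem~\ref{thm:regnomfeas}, now required to hold simultaneously across the whole family $\{CL(P,K,\Delta):\Delta\in\mathbf{\Delta}\}$. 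The essential conceptual point throughout is that the $\Delta$-independence of the baseline cost is precisely what permits a single spectral factor, and hence a single robust $H_\infty$ test, to capture robust regret for every admissible uncertainty at once; had the baseline depended on $\Delta$, one would need a $\Delta$-dependent factor and the reduction would break down.
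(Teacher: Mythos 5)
Your proposal is correct and follows essentially the same route as the paper, which establishes this theorem through the discussion immediately preceding its statement: the $\Delta$-independence of the baseline cost lets the nominal spectral factor $F$ from Lemma~\ref{lem:RegretSF} convert the family of regret inequalities into the robust $H_\infty$ condition via the bijection $d = F^{-1}\hat{d}$. Your additional care in checking that well-posedness and stability are preserved under the cascade with the stable $F^{-1}$ (uniformly over $\Delta$) is a slightly more explicit treatment of a point the paper handles only in the nominal case, but it is the same argument.
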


The robust performance condition in Theorem~\ref{thm:regrobfeas}
depends on the uncertainty.  We next show that this is equivalent to a
condition that does not depend on the uncertainty.  Consider the
feedback interconnection in Figure~\ref{fig:RobRegIC}.  This
corresponds to the closed-loop $CL(P,K,\Delta)$ with the uncertainty
channels ``open'' and with the disturbance channel weighted by
$F^{-1}$. The system from inputs $\bsmtx w \\ \hat{d} \esmtx$ to
outputs $\bsmtx v \\ e \esmtx$ is denoted by:
\begin{align}
  \label{eq:Mdef}
  M:=F_L(P,K) \bmtx I_{n_w} & 0 \\ 0 & F^{-1}\emtx.
\end{align}
$M$ is constructed from $(P,K,F^{-1})$ but this dependence is
suppressed to simplify the notation.  Note that
$CL(P,K,\Delta) F^{-1} = F_U(M,\Delta)$.

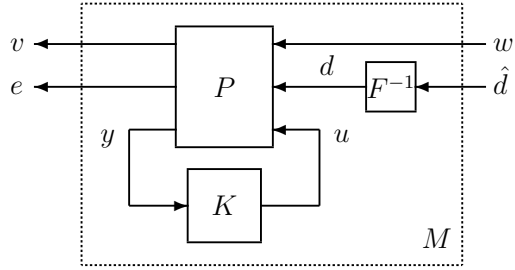
\begin{figure}[h]
\centering
\scalebox{0.9}{
\begin{picture}(210,110)(5,15)
 \thicklines
 \put(75,65){\framebox(40,50){$P$}}
 \put(135,95){$d$}
 \put(155,90){\vector(-1,0){40}}  
 \put(155,80){\framebox(20,20){$F^{-1}$}}
 \put(208,87){$\hat{d}$}
 \put(205,90){\vector(-1,0){30}}  
 \put(5,87){$e$}
 \put(75,90){\vector(-1,0){60}}  
 \put(80,25){\framebox(30,30){$K$}}
 \put(43,67){$y$}
 \put(55,72){\line(1,0){20}}  
 \put(55,72){\line(0,-1){32}}  
 \put(55,40){\vector(1,0){25}}  
 \put(141,67){$u$}
 \put(135,40){\line(-1,0){25}}  
 \put(135,40){\line(0,1){32}}  
 \put(135,72){\vector(-1,0){20}}  
 \put(208,105){$w$}
 \put(205,108){\vector(-1,0){90}}  
 \put(5,105){$v$}
 \put(75,108){\vector(-1,0){60}}  
 \put(35,15){\dashbox(160,110)}
 \put(178,22){$M$}
\end{picture}
} 
\caption{Feedback interconnection for robust regret feasibility.}
\label{fig:RobRegIC}
\end{figure}

The next theorem uses $M$ to state a necessary and sufficient
condition for $K$ to achieve robust $(\gamma_d,\gamma_J)$-regret.
This is a special case of results for the structured singular value,
also known as $\mu$ (see \cite{packard93} or Chapter 11 of \cite{zhou96}).
Technical details are given in Appendix~\ref{app:RP}.

\begin{thm}
  \label{thm:MDregbnd}
  A controller $K$ achieves robust $(\gamma_d,\gamma_J)$-regret
  relative to the optimal non-causal controller $K^o$ if and only if
  (i) $M$ is stable, and (ii) there exists $D:[0,2\pi]\to (0,\infty)$
  such that
  \begin{align}
    \nonumber
    & \bar{\sigma}\left( \bsmtx D(\theta) \cdot I_{n_v} & 0 \\ 0 & I_{n_e} \esmtx
    M(e^{j\theta}) \bsmtx D(\theta)^{-1} \cdot I_{n_w} & 0 \\ 0 & I_{n_d} \esmtx
   \right) < 1 \\ 
    \label{eq:MDregbnd}
     & \hspace{2in} \forall \theta \in [0,2\pi].
    \end{align}
\end{thm}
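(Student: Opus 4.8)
The plan is to build on Theorem~\ref{thm:regrobfeas} together with the identity $CL(P,K,\Delta)F^{-1}=F_U(M,\Delta)$ noted just above. By that theorem, $K$ achieves robust $(\gamma_d,\gamma_J)$-regret if and only if $F_U(M,\Delta)$ is well-posed, stable, and satisfies $\|F_U(M,\Delta)\|_\infty<1$ for every $\Delta\in\mathbf{\Delta}$. This is precisely a \emph{robust performance} statement for the interconnection $M$ of Figure~\ref{fig:RobRegIC}, in which $v\to w$ is closed by the uncertainty and $\hat d\to e$ is the performance channel. The goal is then to recast this robust performance condition as the scaled frequency-domain bound~\eqref{eq:MDregbnd}.

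For condition (i), I would note that setting $\Delta=0$ shows robust regret forces the nominal loop $F_L(P,K)$ to be stable; since $F^{-1}$ is stable and $M=F_L(P,K)\,\mathrm{diag}(I_{n_w},F^{-1})$, the map $M$ is stable as well, and conversely stability of $M$ recovers nominal internal stability. Thus (i) is equivalent to nominal stability and is a necessary part of robust regret. The substance of the theorem is the equivalence between the ``for all $\Delta$'' strict performance bound and the existence of the scalar scale $D(\theta)$, for which I would appeal to the structured singular value machinery detailed in Appendix~\ref{app:RP}.

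The central step is the main loop theorem: introduce a fictitious full performance block $\Delta_P$ of dimension $n_d\times n_e$ closing $e\to\hat d$, and form the augmented structure $\hat\Delta=\mathrm{diag}(\Delta,\Delta_P)$ consisting of \emph{two full blocks}. Robust stability of $F_U(M,\Delta)$ together with the strict performance bound over all $\Delta\in\mathbf{\Delta}$ is equivalent to robust stability of the augmented loop against every $\hat\Delta$ in the unit ball, which — for causal, stable, LTI uncertainty and by continuity of $\mu$ over the compact set $[0,2\pi]$ — is equivalent to $\mu_{\hat\Delta}(M(e^{j\theta}))<1$ at each frequency $\theta$. The strict inequality together with the compact frequency range is exactly what reconciles the closed unit ball $\mathbf{\Delta}$ with a uniform peak strictly below one. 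I would then invoke the exactness of the D-scale upper bound: since $\hat\Delta$ has exactly two complex full blocks ($2S+F=2\le 3$ in the Packard--Doyle count), $\mu_{\hat\Delta}(M(e^{j\theta}))=\inf_{D}\bar\sigma\!\left(D\,M(e^{j\theta})\,D^{-1}\right)$ with the minimizing $D$ commuting with $\hat\Delta$, hence of the form $\mathrm{diag}(d_1 I, d_2 I)$. Using invariance of $\bar\sigma(DMD^{-1})$ under $D\mapsto\alpha D$, I normalize the performance-block scale to the identity, leaving a single scalar $D(\theta)=d_1/d_2>0$; this produces exactly the scaled singular value appearing in~\eqref{eq:MDregbnd}, so $\mu_{\hat\Delta}(M(e^{j\theta}))<1$ for all $\theta$ is equivalent to the existence of $D:[0,2\pi]\to(0,\infty)$ satisfying~\eqref{eq:MDregbnd}.

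I expect the main obstacle to be the careful $\mu$-theoretic bookkeeping rather than any single hard estimate: justifying the main loop theorem in this strict, closed-ball setting with infinite-dimensional LTI $\Delta$, confirming that robust stability for dynamic LTI uncertainty is captured \emph{exactly} (not merely sufficiently) by the frequency-wise bound $\mu<1$, and invoking exactness of the D-scale upper bound for two full blocks together with the measurable/continuous selection of $D(\theta)$. These are precisely the technical points that I would defer to Appendix~\ref{app:RP}.
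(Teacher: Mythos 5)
Your proposal is correct and follows essentially the same route as the paper: reduce via Theorem~\ref{thm:regrobfeas} and the identity $CL(P,K,\Delta)F^{-1}=F_U(M,\Delta)$ to a robust performance condition on $F_U(M,\Delta)$ over all $\Delta\in\mathbf{\Delta}$, then convert that to the scaled frequency-domain bound using the two-full-block main loop theorem and the exactness of the $D$-scale upper bound --- exactly the content the paper packages as Lemma~\ref{lem:SystemSSV} (built on Lemma~\ref{lem:MatrixSSV} and the small gain theorem) in Appendix~\ref{app:RP}. Your extra care about a continuous selection of $D(\theta)$ is unnecessary since \eqref{eq:MDregbnd} only requires a pointwise-defined positive function, but this does not affect correctness.
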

\begin{proof}
  By Theorem~\ref{thm:regrobfeas}, a controller $K$ achieves robust
  $(\gamma_d,\gamma_J)$-regret relative to $K^o$ if and only if
  $F_U(M,\Delta)$ is well-posed, stable and 
  $\|F_U(M,\Delta) \|_\infty <1$ for all $\Delta \in \mathbf{\Delta}$.
  It follows from Lemma~\ref{lem:SystemSSV} in Appendix~\ref{app:RP}
  that $F_U(M,\Delta)$ satisfies these conditions for all
  $\Delta \in \mathbf{\Delta}$ if and only if (i) $M$ is stable, and
  (ii) there exists $D$ satisfying \eqref{eq:MDregbnd}.
\end{proof}



The robust $(\gamma_d,\gamma_J)$-regret feasibility problem requires
the construction of a controller $K$ and scaling $D$ that satisfy
\eqref{eq:MDregbnd} (where the dependence of $M$ on $K$ has been
suppressed). Unfortunately \eqref{eq:MDregbnd} is, in general, a
non-convex constraint on $(D,K)$.  A common, pragmatic approach is to
perform a coordinate-wise search known as DK-iteration or
$\mu$-synthesis \cite{doyle85,doyle87,balas94,lind94,packard93asme}.
The $D$ scaling is restricted to be a SISO, LTI, stable system with
$D^{-1}$ stable. In this case,
Equation~\ref{eq:MDregbnd} can be written explicitly in terms of
$(P,K,F^{-1},D)$ as follows:
\begin{align}
  \label{eq:DMDinv}
  \left\|
   \bsmtx D \cdot I_{n_v} & 0 \\ 0 & I_{n_e} \esmtx
    F_L(P,K) \bsmtx D^{-1} \cdot I_{n_w} & 0 \\ 0 & F^{-1} \esmtx
  \right\|_\infty < 1.
\end{align}
The DK-iteration corresponds to alternately minimizing the $H_\infty$
norm in \eqref{eq:DMDinv} over $K$ with $D$ held fixed and vice
versa. The $K$-step can be solved as a standard $H_\infty$ synthesis
problem. The $D$-step requires additional details
\cite{doyle85,doyle87,balas94,lind94,packard93asme} but efficient
algorithms exist to compute sub-optimal scalings.  The algorithm is
terminated if the $H_\infty$ norm goes below 1.  If the algorithm
terminates for this reason then we have a controller that achieves
the robust regret bound.  Finally, note that
Theorem~\ref{thm:MDregbnd} gives a necessary and sufficient condition
to achieve robust $(\gamma_d,\gamma_J)$-regret. However, the
DK-iteration need not find the globally optimal pair $(D,K)$. Hence,
successful termination of this algorithm is only sufficient for
achieving robust regret.

\section{Examples}
\label{sec:example}



This section presents three examples to illustrate the proposed
synthesis method for robust, regret optimal control.  Code to
reproduce the results for all examples is available at:

\url{https://github.com/jliu879/Robust_Regret_Optimal_Control}

\subsection{Simple SISO Design}

This section presents a simple, SISO design example. This
example is useful to gain some additional insights about the
performance of the various nominal and robust regret controllers
discussed in this paper.  Consider the classical feedback diagram in
Figure~\ref{fig:simpleEx} with the (continuous-time) plant
$G(s)=\frac{15}{s+5.6}$. The actuator dynamics $A(s)$ are
assumed to be uncertain:
\begin{align}
  \label{eq:SimpleA}
  A(s) = A_0(s) \cdot (1+W_{unc}(s)\, \Delta(s) ),
\end{align}
where $W_{unc}(s) = \frac{3s+4.62}{s+23.1}$ and $\Delta(s)$ is a
stable, LTI system satisfying $\|\Delta\|_\infty \le 1$.  The weight
$W_{unc}(s)$ has DC gain of 0.2, $|W_{unc}(j\infty)|=3$, and a zero
near $-1.5$rad/sec.  This represents a relatively small uncertainty
(20\%) uncertainty at low frequencies with increasing uncertainty
above $1.5$rad/sec. The continuous-time plant and actuator
models are discretized using zero-order hold with a sample time
$T_s = 0.001$ sec.\footnote{$A_0(s)$ and $W_{unc}(s)$ are discretized
  with sample time $T_s$. The normalized uncertainty is replaced by a
  discrete-time, stable LTI system $\Delta(z)$ with sample time $T_s$
  satisfying $\|\Delta\|_\infty \le 1$.}

\begin{figure}[h]
\centering
\scalebox{0.9}{
\begin{picture}(380,110)(0,-50)
 \thicklines
 \put(0,0){\vector(1,0){40}}
 \put(10,3){$d$}
 \put(40,-20){\framebox(40,40){$W_d(s)$}}
 \put(80,0){\vector(1,0){27}}
 \put(110,0){\circle{6,0}}
 \put(113,0){\vector(1,0){37}}
 \put(150,-20){\framebox(40,40){$K(s)$}}
 \put(190,0){\vector(1,0){40}}
 \put(230,-20){\framebox(40,40){$A(s)$}}
 \put(270,0){\vector(1,0){30}}
 \put(300,-20){\framebox(40,40){$G(s)$}}
 \put(340,0){\vector(1,0){40}}
 \put(360,0){\line(0,-1){50}}
 \put(360,-50){\line(-1,0){250}}
 \put(110,-50){\vector(0,1){47}}
 \put(115,-15){\line(1,0){8}}
 \put(125,0){\line(0,1){50}}
 \put(125,50){\vector(1,0){25}}
 \put(150,35){\framebox(30,30){$W_e(s)$}}
 \put(180,50){\vector(1,0){20}}
 \put(190,57){$e_1$}
 \put(210,0){\line(0,1){50}}
 \put(210,50){\vector(1,0){25}}
 \put(235,35){\framebox(30,30){$W_u(s)$}}
 \put(265,50){\vector(1,0){20}}
 \put(275,57){$e_2$}
\end{picture}
} 
\caption{Feedback interconnection for SISO design example.}
\label{fig:simpleEx}
\end{figure}
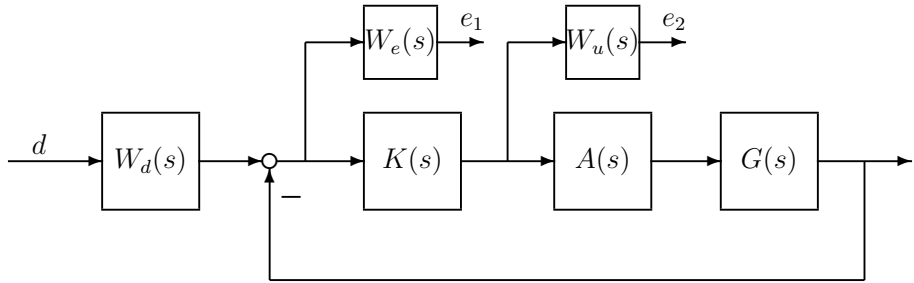

Figure~\ref{fig:simpleEx} also includes weights to describe the
performance objectives.  There is one generalized disturbance $d$
that accounts for the effect of the reference command.
The two generalized errors $e_1$ and
$e_2$ represent the competing objectives between reference tracking
and control effort.  The corresponding weights are:
\begin{align}
  W_d(s) & = \frac{8}{s+8}, \\
  W_e(s) & = \frac{0.5 s + 6.93}{s+0.0693}, \\
  W_u(s) & = \frac{1000s+ 804}{s+8040}.
\end{align}
The weight $W_d(s)$ represents reference commands with dominant low
frequency content below $8$\,rad/sec.  The weight $W_e$ has a DC gain
of 10, crosses a gain of one at 8\,rad/sec, and transitions to a gain
of 0.5 at high frequencies. This weight emphasizes the tracking error
at low frequencies.  Conversely, the weight $W_u(s)$ has a DC gain of
0.1, crosses a gain of one at 8\,rad/sec, and transitions to a gain of
1000 at high frequencies. This weight penalizes high frequency control
effort. These weights are discretized using a zero-order with a sample
time $T_s = 0.001$ sec.  The design problem can expressed in the form
of the robust synthesis interconnection $CL(P,K,\Delta)$ shown in
Figure~\ref{fig:FLPKUnc}.  


We first consider the properties of the optimal non-causal controller
$K^o$ constructed in Section~\ref{sec:K0}.  To simplify notation, let
$T^o$ denote the (non-causal) closed-loop system from $d$ to $e$
obtained with $K^o$. A given disturbance $d\in \ell_2$ generates an
error $e$ with resulting cost $J(K^o,d)= \| e \|_2^2$. Let $\hat{d}$
and $\hat{e}$ denote the Fourier Transforms of $d$ and $e$,
respectively, so that
$\hat{e}(j\omega) = T^o(j\omega) \hat{d}(j\omega)$.  It follows from
the Plancheral (Parseval's)  Theorem (Section 3.3 of \cite{dullerud99}) that the
cost can be equivalently computed in the frequency domain:
\begin{align}
    J(K^o,d) 
    & = \frac{1}{2\pi} \int_{-\infty}^{+\infty} 
         \hat{e}(j\omega)^* \,  \hat{e}(j\omega) \, d\omega 
      = \frac{1}{2\pi} \int_{-\infty}^{+\infty} 
        \left\| T^o(j\omega) \right\|_2^2
        \,\, |\hat{d}(j\omega)|^2 \, d\omega.
\end{align}
The simplification in the right-most expression relies on the fact
that $\hat{d}$ is scalar so that $T^o(j\omega)$ is a $2\times 1$
column vector. Figure~\ref{fig:SimpleEx_NoncausalCost} shows the
response $\| T^o(j\omega)\|_2$ vs. $\omega$.  The gain is small at
both very low and very high frequencies. This indicates that the
optimal non-causal controller achieves a small cost for disturbances
(reference commands) with energy predominantly at low and high
frequencies.  However, the closed-loop response has a large gain in
the range of 1 to 10 rad/sec. Thus the optimal non-causal controller
will have (relatively) poor performance for disturbances with energy
predominantly in these mid-frequencies.

\begin{figure}[h!]
  \centering
  \includegraphics[width=0.45\textwidth]{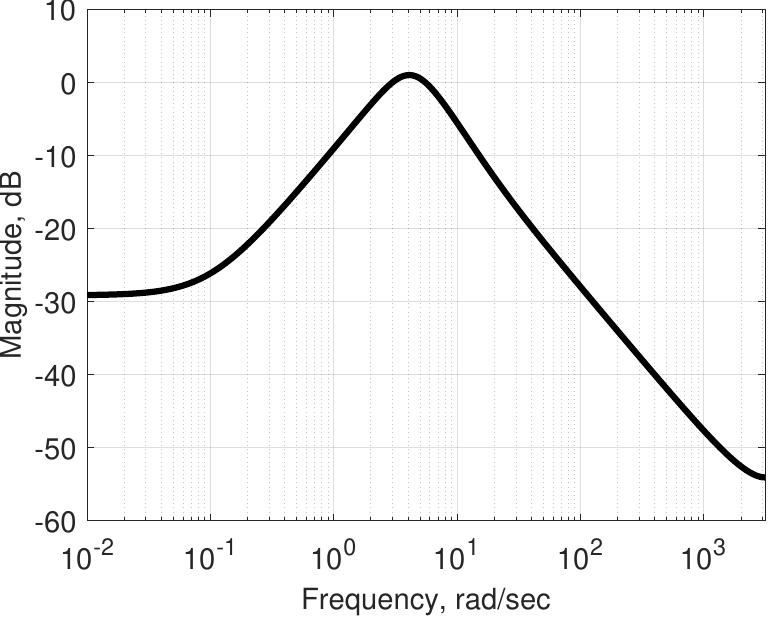}
  \caption{Closed-loop response with optimal non-causal controller:
    $\| T^o(j\omega)\|_2$ vs. $\omega$.}
\label{fig:SimpleEx_NoncausalCost}
\end{figure}

Next we consider the nominal regret, as defined in
Section~\ref{sec:nomregret}, with $\Delta=0$. The nominal regret
includes three special optimal controllers (values within numerical
tolerances): (a) $H_\infty$: $(\gamma_d,\gamma_J) = (1.82,0)$, (b)
competitive ratio: $(\gamma_d,\gamma_J) = (0,3.29)$, and (c) additive
regret: $(\gamma_d,\gamma_J) =
(1.63,1)$. Figure~\ref{fig:SimpleEx_Pareto} shows these three points
along with the Pareto optimal curve for feasible values
$(\gamma_d,\gamma_J)$ of nominal regret (blue dashed curve).
Equation~\ref{eq:ParetoOpt} in Section~\ref{sec:nomprob} characterized
the Pareto front using a parameterized optimization.  Here we use a
different numerical implementation and generate the curve by: (i)
selecting 20 values of $\gamma_d$ in the range of
$[0.001,\, 0.999] \times \gamma_\infty$ where $\gamma_\infty = 1.82$,
and (ii) bisecting to find the minimal value of $\gamma_J$ for each
value of $\gamma_d$ in the grid. Step 2 was performed using the method
described in Section~\ref{sec:nomregret}. The bisection was stopped
when the interval of lower/upper bounds on $\gamma_J$, denoted
$[\underline{\gamma}_J,\bar{\gamma}_J]$, satisfied
$\bar{\gamma}_J-\underline{\gamma}_J \le 10^{-4} + 10^{-4}
\bar{\gamma}_J$. The twenty points were computed in $\approx 6.4$sec
on a standard laptop, i.e. each point on the nominal regret curve took
$\approx 0.32$sec to compute.

\begin{figure}[h!]
  \centering
  \includegraphics[width=0.45\textwidth]{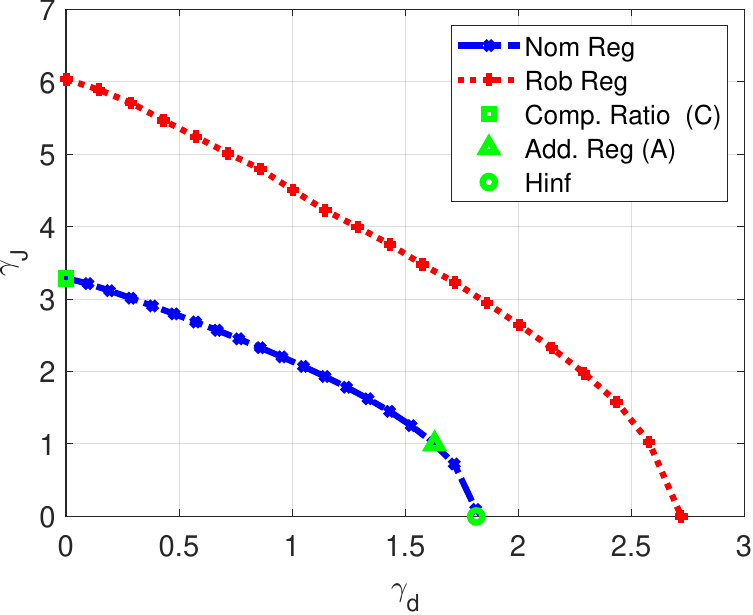}
  \caption{Pareto optimal curve for nominal and robust regret.}
  \label{fig:SimpleEx_Pareto}
\end{figure}

The left side of Figure~\ref{fig:SimpleEx_Nom} shows the frequency
responses for the nominal regret-optimal controllers along the Pareto
curve.  The $H_\infty$ and competitive ratio controllers are
highlighted in red-dashed and blue solid, respectively.  These two
controllers lie on the endpoints of the Pareto curve.  The other
Pareto controllers (black dotted) transition between these two
extremes for this example. It is notable that the competitive ratio
controller has a larger low frequency gain and smaller high frequency
gain than the other controllers.

\begin{figure}[h!]
  \centering
  \includegraphics[width=0.45\textwidth]{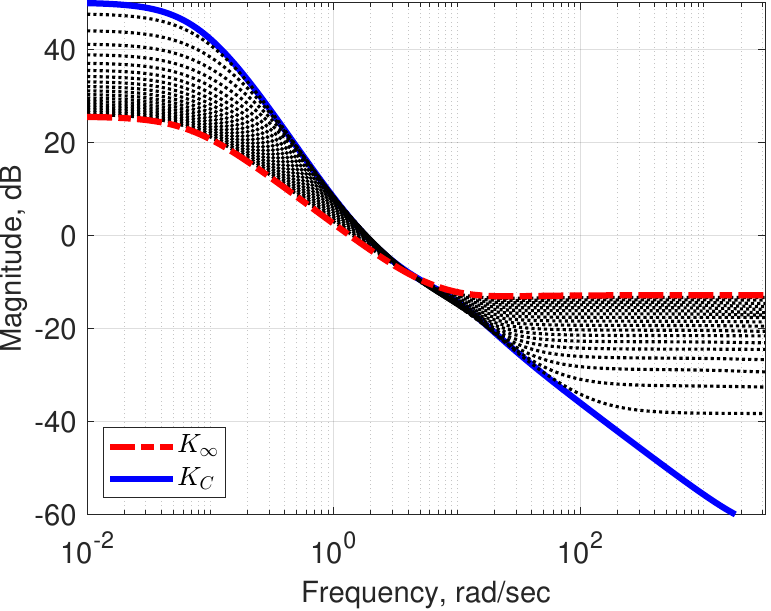}  
  \includegraphics[width=0.45\textwidth]{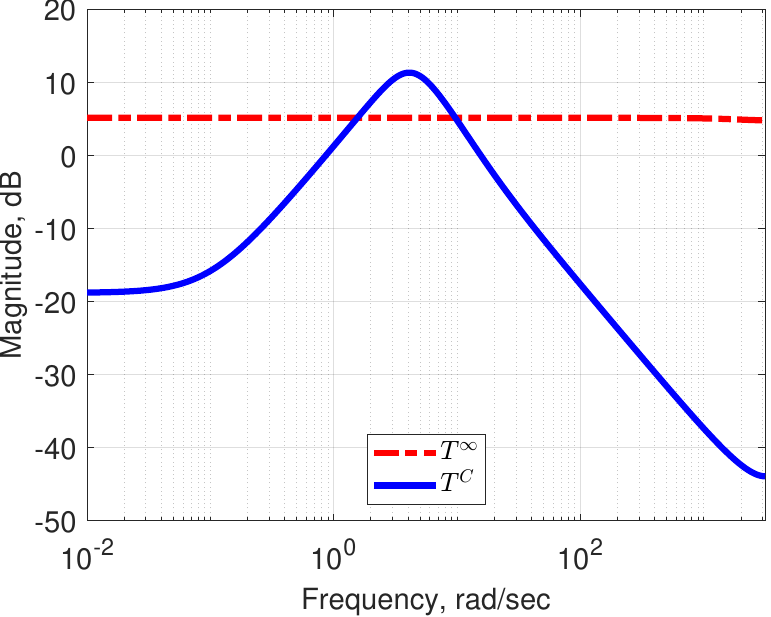}
  \caption{Left: Frequency response of nominal regret optimal
    controllers along the Pareto curve.  Right: Closed-loop
    responses of $H_\infty$ and competitive ratio controllers:
      $\| T^\infty(j\omega)\|_2$ and $\| T^C(j\omega)\|_2$ vs. $\omega$.}
    \label{fig:SimpleEx_Nom}
\end{figure}

To further interpret these results, let $T^\infty$ and $T^C$ denote
the closed-loop system from $d$ to $e$ obtained with the $H_\infty$
and competitive ratio controllers.  These are special cases of nominal
$(\gamma_d,\gamma_J)$-regret are discussed in
Section~\ref{sec:nomprob}.  The right side of
Figure~\ref{fig:SimpleEx_Nom} shows both $\|T^{\infty}(j\omega)\|_2$
and $\|T^{\infty}(j\omega)\|_2$ versus $\omega$. The closed-loop with
the $H_\infty$ controller has an approximately flat response.  This
flat response arises because the $H_\infty$ bound, expressed in the
frequency domain, corresponds to:
$\|T^{\infty}(j\omega)\|_2 \le \gamma_\infty$ $\forall \omega$ where
$\gamma_\infty=1.82=5.2$dB. On the other hand, the closed-loop
response with the competitive ratio controller has a similar shape to
that obtained with the non-causal controller in
Figure~\ref{fig:SimpleEx_NoncausalCost}.  This shape arises because
the competitive ratio bound corresponds to:
$\|T^C(j\omega)\|_2 \le \gamma_C \|T^o(j\omega)\|_2$ $\forall \omega$
where $\gamma_C = 3.29= 10.3$ dB.  The optimal non-causal controller
has good performance at low and high frequencies as noted above.
Hence the competitive ratio controller has similar characteristics
(although degraded by a factor of $\gamma_C=3.29$ relative to $K^o$).
This is the reason that $K^C$ has larger low frequency gain and rolls
off more rapidly at high frequencies than $K^\infty$ as shown on the
left side of Figure~\ref{fig:SimpleEx_Nom}.

Finally, we consider robust regret-optimal controllers.
Figure~\ref{fig:SimpleEx_Pareto} also shows the Pareto optimal curve
for feasible values $(\gamma_d,\gamma_J)$ of robust regret (red dotted
curve). The curve was generated via bisection on $\gamma_J$ using the
same grid of 20 values for $\gamma_d$. The bisection was performed
using the method described in Section~\ref{sec:robregret} with the
same stopping conditions.  The twenty points were computed in
$\approx 998$sec on a standard laptop, i.e. each point on the robust
regret curve took $\approx 50$sec to compute. The robust regret is
more computationally intensive as DK-synthesis must be used at each
bisection step to synthesize a controller that satisfies the robust
performance condition in Theorem~\ref{thm:MDregbnd}.

We first consider the competitive ratio controllers to study the
effect of the robust design.  The left side of
Figure~\ref{fig:SimpleEx_CRCosts} shows the nominal and robust
competitive ratio controllers.  The nominal competitive ratio
controller (blue solid) has large gain at low frequencies and a steep
transition to small gain at high frequencies.  The corresponding
nominal loop $G(s) A_0(s) K^C(s)$ has a phase margin of $36^\circ$ at
$3.7$\,rad/sec.  The robust competitive ratio controller (red dashed)
has a more shallow slope near 3.7 rad/sec. This corresponds to a less
negative phase (not shown) by the Bode Gain-Phase theorem. In fact,
the nominal loop with the robust competitive ratio controller has a
larger phase margin of $50^\circ$ at crossover frequency of 3.5
rad/sec.

\begin{figure}[h!]
  \centering
  \includegraphics[width=0.45\textwidth]{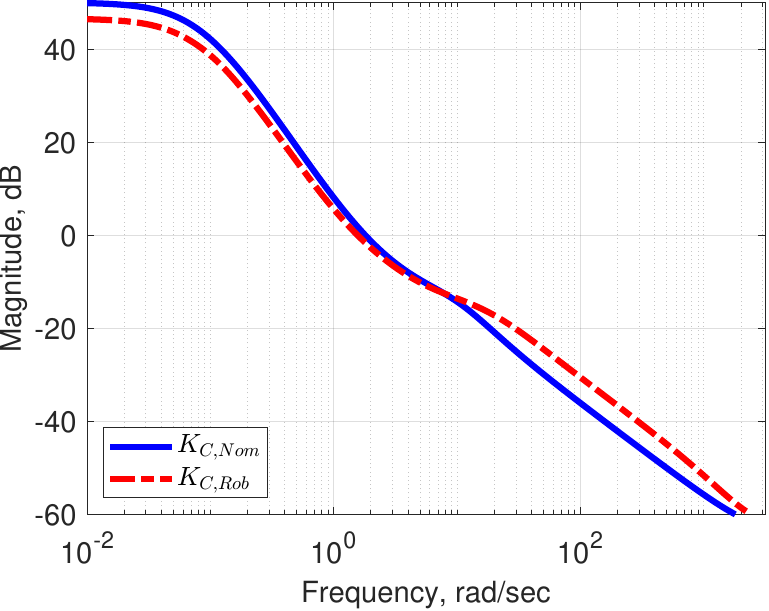}
  \includegraphics[width=0.45\textwidth]{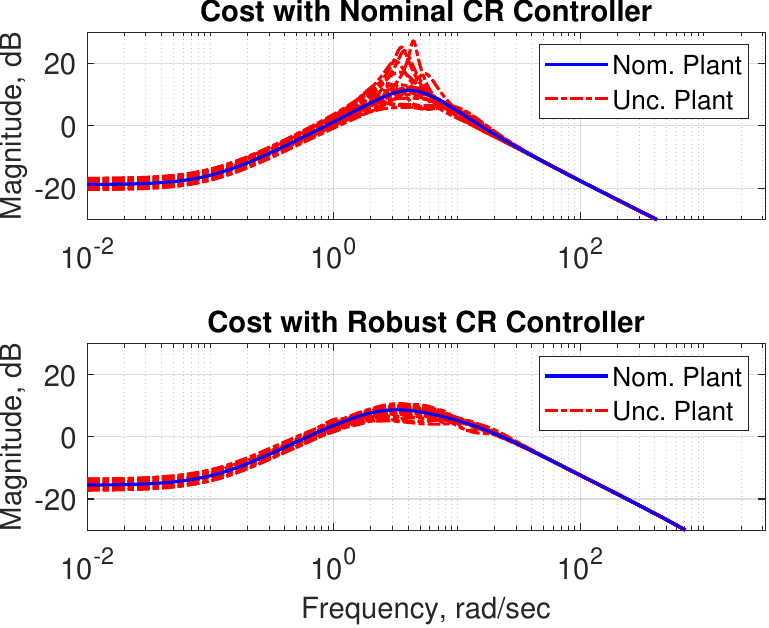}
  \caption{Left: Frequency response of nominal and robust
    competitive ratio controllers.  Right: Closed-loop
    responses with uncertain plant using the nominal (top)
    and robust (bottom)  competitive ratio controllers.}
\label{fig:SimpleEx_CRCosts}
\end{figure}

The right side of Figure~\ref{fig:SimpleEx_CRCosts} provides
additional insight. The top subplot shows the closed-loop response
with the nominal competitive ratio controller. The blue curve is
generated using the nominal plant and is the same as shown on the
right side of \ref{fig:SimpleEx_Nom}.  The red dashed curves are
generated on samples of the uncertain plant. Specifically, twenty
stable, fifth-order, LTI systems $\{\Delta_i\}_{i=1}^{20}$ are
randomly sampled and scaled to satisfy $\|\Delta_i\|_\infty \le
1$. These are substituted into the uncertain actuator model
\eqref{eq:SimpleA} and used to generate a sample of the uncertain
closed-loop.  The performance of the nominal competitive ratio
controller degrades significantly in the middle frequency range due to
this uncertainty.  The lower subplot is similar but is generated using
the robust competitive ratio controller. This controller is much less
sensitive to the model uncertainty as expected.

Other controllers along the nominal and robust Pareto front can be
compared. For example, the points along the horizontal axis of
Figure~\ref{fig:SimpleEx_Pareto} correspond to the nominal and robust
$H_\infty$ controllers.  The robust $H_\infty$ controller is a variant
of $\mu$-synthesis that minimizes the worst-case gain from $d$ to $e$
over all norm-bounded uncertainties $\| \Delta\|_\infty \le 1$. The
nominal and robust $H_\infty$ controllers are similar for this
particular example (although the robust cost is larger).  The reason
is that the nominal $H_\infty$ controller, shown on the left of
Figure~\ref{fig:SimpleEx_Nom} has relatively smaller gain at low
frequencies and relatively larger gain at high frequencies. Hence this
controller is already relatively robust. For example, the loop with
the nominal $H_\infty$ controller has a phase margin of $78^\circ$
with a crossover frequency of 3.6 rad/sec. The robust $H_\infty$
controller is similar with a slightly larger phase margin of
$85^\circ$ and a slightly lower crossover frequency of 3.2 rad/sec.
Both the nominal and robust $H_\infty$ controllers achieve robustness
but at the expense of sacrificing performance relative to the
nominal and competitive ratio controllers.

\subsection{Boeing 747}

A simplified (linearized) model for the longitudinal dynamics of a
Boeing 747 at one steady, level flight condition are given by (Problem
17.6 in \cite{boyd18}):
\begin{align}
  \label{eq:Boeing747Dyn}
  x_{t+1} = A x_t + B \tilde{u}_t + d_t,
\end{align}
where the state matrices are:
\begin{align*}
  A = \bsmtx 0.99 & 0.03 & -0.02 & -0.32 \\ 
               0.01 & 0.47 & 4.7 &  0 \\
               0.02 & -0.06 & 0.4 & 0 \\
               0.01 & -0.04 & 0.72 & 0.99 \esmtx 
\,\, \mbox{ and } \,\,
  B =  \bsmtx 0.01 & 0.99 \\ -3.44 & 1.66 \\ 
               -0.83 & 0.44 \\ -0.47 & 0.25 \esmtx.
\end{align*}
We will assume there is a multiplicative uncertainty at the plant input:
\begin{align}
\label{eq:Boeing747Unc}
  \tilde{u}= (I_2 + 0.6 \Delta) u,
\end{align}
where $u_t\in \R^2$ is the (nominal) command from the controller and
$\Delta$ is a $2\times 2$, stable, LTI uncertainty such that
$\| \Delta \|_\infty \le 1$.  Multiplicative uncertainty is a commonly
used to account for non-parametric (dynamic) modeling
errors~\cite{zhou96}.  No assumptions are made regarding the
state-dimension of $\Delta$.  The constant factor of 0.6 implies that
the effect of the uncertainty can be up to 60\% of the size of the
nominal control at each frequency. This factor could be made
frequency-dependent to account for increased non-parametric errors at
high frequencies. We'll use the constant factor for simplicity.

Define the generalized error as $e_t=\bsmtx x_t \\ u_t \esmtx$.  The
corresponding per-step cost is
$e_t^\top e_t = x_t^\top x_t + u_t^\top u_t$. We'll also assume the
controller has access to full information measurements, i.e.
$y_t = \bsmtx x_t \\ d_t \esmtx$. Based on this information, we can
formulate the robust regret optimal control design as in
Figure~\ref{fig:RobRegIC}. Specifically, define $(v,w)$ as the
input/output signals of the uncertainty in \eqref{eq:Boeing747Unc},
i.e. $w=\Delta v$ with $v=u$.  Then substitute $\tilde{u}= u + 0.6 w$
into \eqref{eq:Boeing747Dyn}:
\begin{align}
  x_{t+1} = A x_t + (0.6B) w_t + d_t + B u_t.
\end{align}
This has the form of the state update equation for
the uncertain plant $P$ in \eqref{eq:Punc} with $B_w=0.6B$, 
$B_d=I_4$, and $B_u = B$.  The output equations for $(v,e,y)$ in
\eqref{eq:Punc} can be derived similarly.  Note that the nominal
dynamics are given by $\Delta=0$ and neglecting the $(v,w)$ channels.

This example was used in prior work on competitive ratio
\cite{goel22TAC} and additive regret \cite{sabag21ACC,sabag21arXiv}
with nominal dynamics ($\Delta=0$).  The nominal regret, as defined in
Section~\ref{sec:nomregret}, includes three special optimal
controllers (values within numerical tolerances): (a) $H_\infty$:
$(\gamma_d,\gamma_J) = (28.47,0)$, (b) competitive ratio:
$(\gamma_d,\gamma_J) = (0,1.33)$, and (c) additive regret:
$(\gamma_d,\gamma_J) = (12.27,1)$. The competitive ratio reported
in \cite{goel22TAC} is $\gamma_C^2=1.77 (=1.33^2)$ and this agrees
with the value computed here.

Figure~\ref{fig7:Boeing747RobReg} shows these three points along with
the Pareto optimal curve for feasible values $(\gamma_d,\gamma_J)$ of
nominal regret (blue dashed curve). The curve was generated by: (i)
selecting 20 values of $\gamma_d$ in the range of
$[0.001,\, 0.999]\gamma_\infty$ where $\gamma_\infty = 28.47$, and
(ii) bisecting to find the minimal value of $\gamma_J$ for each value
of $\gamma_d$ in the grid. Step 2 was performed using the method
described in Section~\ref{sec:nomregret}. The bisection was stopped
when the interval of lower/upper bounds on $\gamma_J$, denoted
$[\underline{\gamma}_J,\bar{\gamma}_J]$, satisfied
$\bar{\gamma}_J-\underline{\gamma}_J \le 10^{-2} + 10^{-3}
\bar{\gamma}_J$. The twenty points were computed in $\approx 4.5$sec
on a standard laptop, i.e. each point on the nominal regret curve took
$\approx 0.45$sec to compute.

Figure~\ref{fig7:Boeing747RobReg} also shows the Pareto optimal curve
for feasible values $(\gamma_d,\gamma_J)$ of robust regret (red dotted
curve). The curve was generated via bisection on $\gamma_J$ using the
same grid of 20 values for $\gamma_d$. The bisection was performed
using the method described in Section~\ref{sec:robregret} with the
same stopping conditions.  The twenty points were computed with
$\approx 789$sec on a standard laptop, i.e. each point on the robust
regret curve took $\approx 39.4$sec to compute. The robust regret is
more computationally intensive as DK-synthesis must be used at each
bisection step to synthesize a controller that satisfies the robust
performance condition in Theorem~\ref{thm:MDregbnd}.

The several key points from this example. First, our notion of nominal
regret (Section~\ref{sec:nomregret}) contains $H_\infty$, competitive
ratio, and additive regret as special cases. Second, robust regret
(Section~\ref{sec:robregret}) can be used to synthesis robust
controllers and assess the impact of model uncertainty. For example,
the red dotted and blue dashed curves in
Figure~\ref{fig7:Boeing747RobReg} are similar for larger values of
$\gamma_d$ but diverge for smaller values of $\gamma_d$. This
indicates that the competitive ratio controller is more sensitive to
the non-parametric errors than the $H_\infty$ controller. These results
are specific to this particular problem (plant dynamics, cost, and
full-information measurement). Hence no general conclusions should
be drawn regarding the robustness properties of the various controllers.



\begin{figure}[h!]
  \centering
  \includegraphics[width=0.45\textwidth]{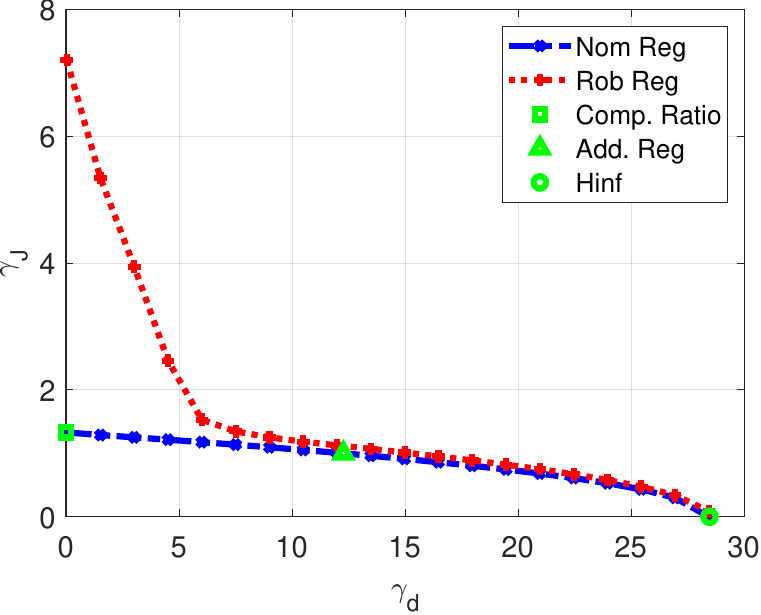}
  \caption{Pareto optimal curve for nominal and robust regret for
  Boeing 747.}
\label{fig7:Boeing747RobReg}
\end{figure}

\subsection{Quarter Car Example}

In this section we consider the design of an active suspension
controller for a quarter car model. The design is based on the
Matlab demo~\cite{matlab22} and a summary of the objectives can
be found in the corresponding tutorial video~\cite{douglas20}.
A model for the quarter car is given by:
\begin{align}
  \dot{x}(t) & = A \, x(t) + B \, \bsmtx r(t) \\ f_s(t) \esmtx \\
     \bsmtx a_b(t) \\ s_d(t) \esmtx  & = C \, x(t) + D \, 
                \bsmtx r(t) \\ f_s(t) \esmtx,
\end{align}
where $r(t)\in \R$ is the road disturbance (m), $f_s(t)\in \R$ is the
active suspension force (kN), $a_b(t) \in \R$ is the car body
acceleration (m/s$^2$), $s_d(t) \in \R$ is the suspension displacement
(m), and $x(t) \in \R^4$ is the state.  The state-space data is:
\begin{align*}
& A  =  \bsmtx 0 & 1 & 0 & 0 \\ 
  -\frac{k_s}{m_b} &  -\frac{b_s}{m_b} & \frac{k_s}{m_b} & \frac{b_s}{m_b} \\
   0 & 0 & 0 & 1 \\
  \frac{k_s}{m_w} &  \frac{b_s}{m_w} & -\frac{k_s+k_t}{m_w} & -\frac{b_s}{m_w} 
\esmtx,
&& B = \bsmtx 0 & 0 \\ 0 & \frac{10^3}{m_b} \\ 0 & 0 \\
        \frac{k_t}{m_w} & -\frac{10^3}{m_w} \esmtx, \\
& C = \bsmtx   
   -\frac{k_s}{m_b} &  -\frac{b_s}{m_b} & \frac{k_s}{m_b} & \frac{b_s}{m_b} \\
    1 & 0 & -1 & 0 \esmtx, 
&& D = \bsmtx 0 & \frac{10^3}{m_b} \\ 0 & 0 \esmtx,
\end{align*}
where $m_b = 300$kg, $m_w = 60$kg, $b_s = 1000$N/m/s,
$k_s = 16000$N/m, and $k_t = 1.9\times 10^5$N/m. The model has lightly
complex poles at $-1.43\pm 6.91j$ and $-8.57\pm 57.6j$. The goal is to
design the active suspension controller to balance driver comfort
(small $a_b$) and road handling (small $s_d$).

A hydraulic actuator generates the force based on a control input $u$.
The nominal hydraulic actuator model is first-order with a time
constant of $\frac{1}{60}$ sec, i.e. $A_0(s) = \frac{60}{s+60}$.  The
maximum actuator displacement is 0.05m. This places a constraint on
the suspension displacement to be less than 0.05m. The actuator
dynamics are assumed to uncertain. This is modeled with an input
multiplicative uncertainty:
\begin{align*}
  A(s) = A_0(s) \cdot (1 + W_{unc}(s) \Delta(s) ),
\end{align*}
where $W_{unc}(s) = \frac{3s+18.5}{s+46.3}$ and $\Delta(s)$ is a
stable, LTI system satisfying $\|\Delta\|_\infty \le 1$.  The weight
$W_{unc}(s)$ has DC gain of 0.4, $|W_{unc}(j\infty)|=3$, and a zero
near $-6.2$rad/sec.  This represents a relatively small uncertainty
(40\%) uncertainty at low frequencies with increasing uncertainty
above $6.2$rad/sec. The continuous-time quarter car and hydraulic
actuator models are discretized using zero-order hold and sample time
$T_s = 0.002$ sec.\footnote{$A_0(s)$ and $W_{unc}(s)$ are discretized
  with sample time $T_s$. The normalized uncertainty is replaced by a
  discrete-time, stable LTI system $\Delta(z)$ with sample time $T_s$
  satisfying $\|\Delta\|_\infty \le 1$.}

Figure~\ref{fig:qcarIC} shows the interconnection for the control
design. There are three disturbances associated with the road input
and measurement noise.  Each of these has a constant weights that
(roughly) model the amplitudes of the corresponding input
disturbances: $W_{road} = 0.07$, $W_{d_2} = 0.01$, and
$W_{d_3} = 0.5$. There are three generalized errors that model the
competing objectives of actuator commands, body acceleration, and
suspension travel. The corresponding performance weights are:
\begin{align*}
 W_{act}(s) & =  0.8\, \frac{s+50}{s+500}, \\
 W_{s_d}(s) & = \frac{0.00625 s + 0.5}{0.005 s + 0.04}, \\
 W_{a_b}(s) & = \frac{0.03s + 4.5}{8s + 3.6}.
\end{align*}
These weights are also discretized using a zero-order hold with
$T_s=0.002$sec. These weights correspond to a ``balanced'' objective
between comfort and handling.  Additional details on the design
objectives can be found in \cite{matlab22,douglas20}.

\begin{figure}[h]
\centering
\scalebox{0.9}{
\begin{picture}(260,120)(0,-62)
 \thicklines
 \put(100,0){\framebox(50,50){1/4 Car}}
 \put(75,10){\vector(1,0){25}}  
 \put(83,15){$f_s$}
 \put(75,40){\vector(1,0){25}}  
 \put(83,45){$r$}
 \put(150,10){\vector(1,0){50}}  
 \put(157,15){$s_d$}
 \put(150,40){\vector(1,0){50}}  
 \put(177,45){$a_b$}
 \put(43,-2){\framebox(32,24){Act($\Delta$)}}
 \put(13,10){\vector(1,0){30}}  
 \put(4,8){$u$}
 \put(43,28){\framebox(32,24){$W_{road}$}}
 \put(13,40){\vector(1,0){30}}  
 \put(3,38){$d_1$}
 \put(200,-2){\framebox(32,24){$W_{s_d}$}}
 \put(232,10){\vector(1,0){20}}  
 \put(255,8){$e_3$}
 \put(200,28){\framebox(32,24){$W_{a_b}$}}
 \put(232,40){\vector(1,0){20}}  
 \put(255,38){$e_2$}
 \put(25,10){\line(0,-1){30}}  
 \put(25,-20){\vector(1,0){18}}  
 \put(43,-32){\framebox(32,24){$W_{act}$}}
 \put(75,-20){\vector(1,0){20}}  
 \put(97,-22){$e_1$}
 \put(160,10){\vector(0,-1){27}}
 \put(160,-20){\circle{6}}
 \put(157,-20){\vector(-1,0){17}}  
 \put(130,-22){$y_1$}
 \put(200,-20){\vector(-1,0){37}}  
 \put(200,-32){\framebox(32,24){$W_{d_2}$}}
 \put(252,-20){\vector(-1,0){20}}  
 \put(255,-22){$d_2$}
 \put(180,40){\line(0,-1){28}}
 \put(180,8){\line(0,-1){26}}
 \put(180,-22){\vector(0,-1){25}}
 \put(180,-50){\circle{6}}
 \put(177,-50){\vector(-1,0){37}}  
 \put(130,-52){$y_2$}
 \put(200,-50){\vector(-1,0){17}}  
 \put(200,-62){\framebox(32,24){$W_{d_3}$}}
 \put(252,-50){\vector(-1,0){20}}  
 \put(255,-52){$d_3$}
\end{picture}
} 
\caption{Feedback interconnection for quarter-car disturbance rejection.}
\label{fig:qcarIC}
\end{figure}
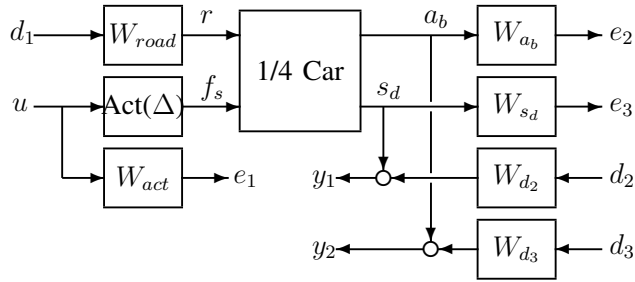

The active suspension design (Figure~\ref{fig:qcarIC}) falls within
the general nominal design interconnection framework
(Figure~\ref{fig:FLPKUnc}).  We first designed a nominal additive
regret controller.  This design was performed on the interconnection
with no uncertainty so that the actuator is at the nominal dynamics,
$A(s)=A_0(s)$. The optimal additive regret controller achieved (within
a bisection tolerance) a regret of $(\gamma_d,\gamma_J)=(0.43,1)$.
\footnote{The optimal nominal $H_\infty$ controller achieved
  $\gamma_\infty =0.66$.} We also designed an optimal robust, additive
regret controller including the actuator uncertainty.  The robust
additive regret controller achieved $(\gamma_d,\gamma_J)=(0.78,1)$.
Figure~\ref{fig:ActiveSuspFD} shows Bode magnitude plots from road
input to body acceleration (without the various weights shown in
Figure~\ref{fig:qcarIC}). The plot shows responses for the
open-loop (OL) and closed-loop (CL) with nominal additive regret (AR)
and robust additive regret (Rob) controllers. The lightly damped
suspension modes are evident in the open-loop response near $7$ and
$58$ rad/sec.  Both the nominal and robust additive regret controllers
reduce the effect of the first open loop mode near 7 rad/sec.  The
other mode near 58 rad/sec cannot be reduced due to a transmission
zero in the plant model.

\begin{figure}[h!]
  \centering
  \includegraphics[width=0.4\textwidth]{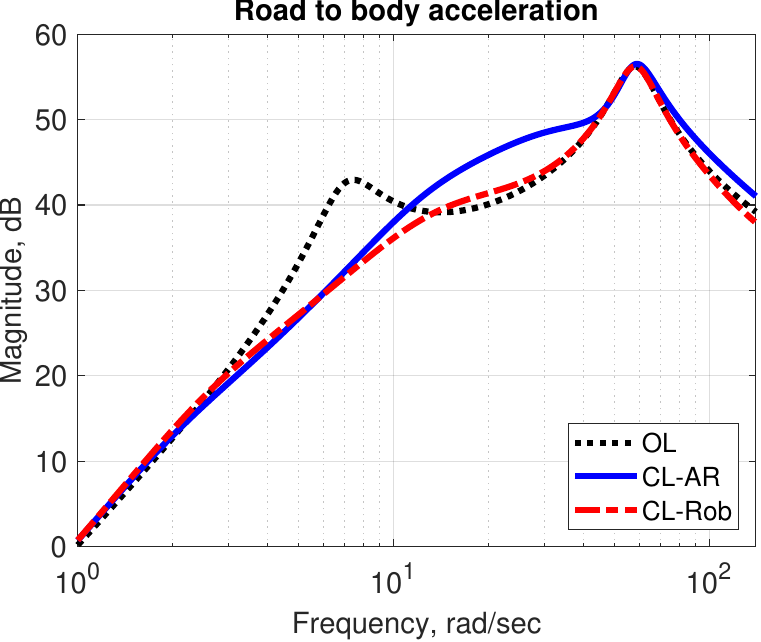}
  \caption{Bode magnitude plots from road input to body acceleration
    output for the open-loop (OL) and closed-loop (CL) with nominal
    additive regret (AR), and robust additive regret (Rob)
    controllers.}
  \label{fig:ActiveSuspFD}
\end{figure}

Figure~\ref{fig:ActiveSuspNomTD} shows a closed-loop, time-domain
response with the \emph{nominal} additive regret controller.  The road
disturbance is $r(t) = 0.025 \cdot (1-\cos(8\pi t))$ for
$0\le t \le 0.2$ and $r(t)=0$ for $t>0.2$.  The simulations are
performed with the nominal actuator model and 50 samples of the
uncertain actuator model. The nominal responses are good for both the
body acceleration and suspension displacement.  The displacement
remains within the allowed $\pm 0.05$m of travel.  However, the
simulations with the uncertain actuator show a wide variability
including lightly damped, oscillatory responses.

\begin{figure}[h!]
  \centering
  \includegraphics[width=0.4\textwidth]{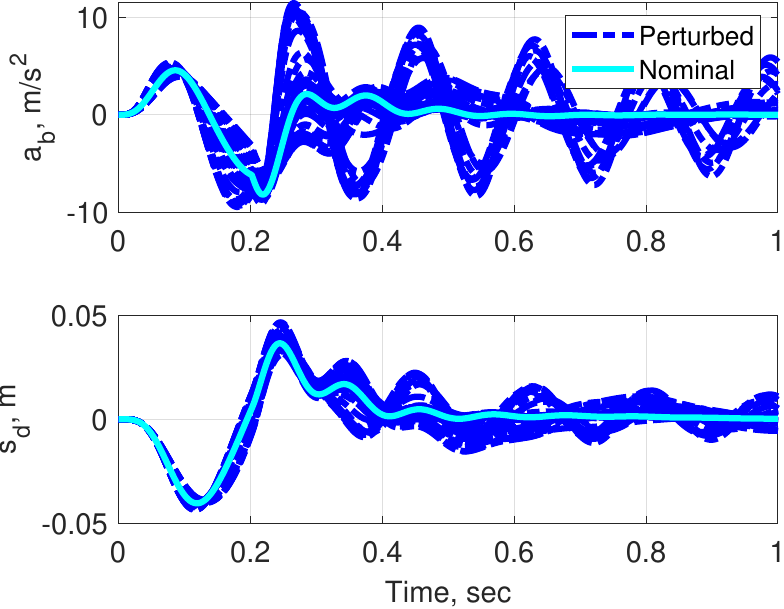}
  \caption{Time domain plots of  body acceleration and suspension travel
    with a road disturbance input  and the optimal additive regret controller.
    Simulations shown with the nominal plant dynamics and 50 samples
    of the uncertain actuator model.}
  \label{fig:ActiveSuspNomTD}
\end{figure}

Figure~\ref{fig:ActiveSuspRobTD} shows the closed-loop, time-domain
responses with the \emph{robust} additive regret controller. The
responses with the nominal actuator model are similar and even
slightly better with the body acceleration.  The main benefit of the
robust additive regret controller is that the simulations with the
uncertain actuator model show much less variability. Thus this
controller is more robust to the actuator uncertainty as expected.

\begin{figure}[h!]
  \centering
  \includegraphics[width=0.4\textwidth]{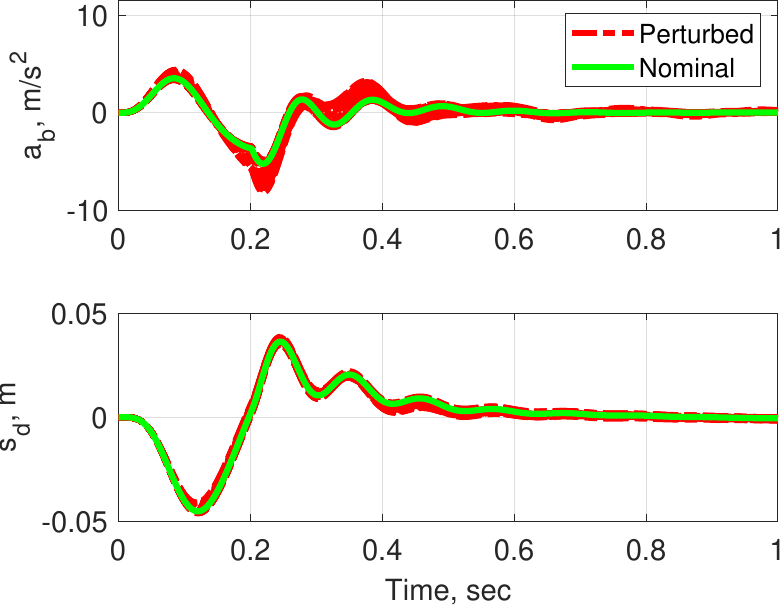}
  \caption{Time domain plots of body acceleration and suspension
    travel with a road disturbance input and the optimal \emph{robust}
    additive regret controller.  Simulations shown with the nominal
    plant dynamics and 50 samples of the uncertain actuator model.}
  \label{fig:ActiveSuspRobTD}
\end{figure}

\section{Conclusions}

This paper presents a synthesis method for robust regret optimal
control for uncertain, discrete-time, LTI systems. The baseline for
performance is the optimal non-causal controller designed on the
nominal plant model and with knowledge of the disturbance.  Robust
regret is defined as the performance of a given controller relative to
this optimal non-causal control. Our definition of regret includes
previous definitions of (additive) regret, (multiplicative)
competitive ratio, and $H_\infty$ performance.  We show that a
controller achieves robust regret if and only if it satisfies a robust
$H_\infty$ performance condition. DK-iteration can be used to
synthesize a controller that satisfies this condition and hence
achieve a given level of robust regret.  The approach is demonstrated
via three examples.  Future work will consider the design of
robust regret controllers where the baseline depends on the specific
value of the model uncertainty, e.g. possibly using results from
linear parameter varying design. Future work will also include
comparisons of this approach with optimal regret controllers designed
via online convex optimization and disturbance-action controllers.


\bibliographystyle{IEEEtran}
\bibliography{RobustRegretControl}

\begin{thebibliography}{10}
\providecommand{\url}[1]{#1}
\csname url@samestyle\endcsname
\providecommand{\newblock}{\relax}
\providecommand{\bibinfo}[2]{#2}
\providecommand{\BIBentrySTDinterwordspacing}{\spaceskip=0pt\relax}
\providecommand{\BIBentryALTinterwordstretchfactor}{4}
\providecommand{\BIBentryALTinterwordspacing}{\spaceskip=\fontdimen2\font plus
\BIBentryALTinterwordstretchfactor\fontdimen3\font minus
  \fontdimen4\font\relax}
\providecommand{\BIBforeignlanguage}[2]{{%
\expandafter\ifx\csname l@#1\endcsname\relax
\typeout{** WARNING: IEEEtran.bst: No hyphenation pattern has been}%
\typeout{** loaded for the language `#1'. Using the pattern for}%
\typeout{** the default language instead.}%
\else
\language=\csname l@#1\endcsname
\fi
#2}}
\providecommand{\BIBdecl}{\relax}
\BIBdecl

\bibitem{zhou96}
K.~Zhou, J.~Doyle, and K.~Glover, \emph{Robust and Optimal Control}.\hskip 1em
  plus 0.5em minus 0.4em\relax Prentice-Hall, 1996.

\bibitem{dullerud99}
G.~Dullerud and F.~Paganini, \emph{A Course in Robust Control Theory: A Convex
  Approach}.\hskip 1em plus 0.5em minus 0.4em\relax Springer, 1999.

\bibitem{goel20arXiv}
G.~Goel and B.~Hassibi, ``Regret-optimal control in dynamic environments,''
  \emph{arXiv preprint arXiv:2010.10473}, 2020.

\bibitem{goel19PMLR}
G.~Goel and A.~Wierman, ``An online algorithm for smoothed regression and {LQR}
  control,'' in \emph{The 22nd International Conference on Artificial
  Intelligence and Statistics}.\hskip 1em plus 0.5em minus 0.4em\relax PMLR,
  2019, pp. 2504--2513.

\bibitem{goel22CDC}
G.~Goel and B.~Hassibi, ``The power of linear controllers in {LQR} control,''
  in \emph{IEEE Conference on Decision and Control}.\hskip 1em plus 0.5em minus
  0.4em\relax IEEE, 2022, pp. 6652--6657.

\bibitem{goel21PMLR}
------, ``Regret-optimal measurement-feedback control,'' in \emph{Learning for
  Dynamics and Control}.\hskip 1em plus 0.5em minus 0.4em\relax PMLR, 2021, pp.
  1270--1280.

\bibitem{sabag21ACC}
O.~Sabag, G.~Goel, S.~Lale, and B.~Hassibi, ``Regret-optimal controller for the
  full-information problem,'' in \emph{American Control Conference}.\hskip 1em
  plus 0.5em minus 0.4em\relax IEEE, 2021, pp. 4777--4782.

\bibitem{sabag21arXiv}
------, ``Regret-optimal full-information control,'' \emph{arXiv preprint
  arXiv:2105.01244}, 2021.

\bibitem{goel22TAC}
G.~Goel and B.~Hassibi, ``Competitive control,'' \emph{IEEE Transactions on
  Automatic Control}, 2022.

\bibitem{sabag22arXiv}
O.~Sabag, S.~Lale, and B.~Hassibi, ``Optimal competitive-ratio control,''
  \emph{arXiv preprint arXiv:2206.01782}, 2022.

\bibitem{didier22}
A.~Didier, J.~Sieber, and M.~N. Zeilinger, ``A system level approach to regret
  optimal control,'' \emph{IEEE Control Systems Letters}, vol.~6, pp.
  2792--2797, 2022.

\bibitem{karapetyan22}
A.~Karapetyan, A.~Iannelli, and J.~Lygeros, ``On the regret of $h_\infty$
  control,'' in \emph{IEEE Conference on Decision and Control}.\hskip 1em plus
  0.5em minus 0.4em\relax IEEE, 2022, pp. 6181--6186.

\bibitem{agarwal19}
N.~Agarwal, B.~Bullins, E.~Hazan, S.~Kakade, and K.~Singh, ``Online control
  with adversarial disturbances,'' in \emph{International Conference on Machine
  Learning}.\hskip 1em plus 0.5em minus 0.4em\relax {PMLR}, 2019, pp. 111--119.

\bibitem{hazan16}
E.~Hazan, ``Introduction to online convex optimization,'' \emph{Foundations and
  Trends{\textregistered} in Optimization}, vol.~2, no. 3-4, pp. 157--325,
  2016.

\bibitem{dgkf89}
J.~Doyle, K.~Glover, P.~Khargonekar, and B.~Francis, ``State-space solutions to
  standard {$H_2$ and $H_\infty$} control problems,'' \emph{IEEE Transactions
  on Automatic Control}, vol.~34, no.~8, pp. 831--847, 1989.

\bibitem{hassibi99}
B.~Hassibi, A.~H. Sayed, and T.~Kailath, \emph{Indefinite-Quadratic estimation
  and control: a unified approach to $H_2$ and $H_\infty$ theories}.\hskip 1em
  plus 0.5em minus 0.4em\relax SIAM, 1999.

\bibitem{doyle85}
J.~Doyle, ``Structured uncertainty in control system design,'' in
  \emph{Proceedings of the IEEE Conference on Decision and Control}, 1985, pp.
  260--265.

\bibitem{doyle87}
J.~Doyle, K.~Lenz, and A.~Packard, ``Design examples using $\mu$-synthesis:
  Space shuttle lateral axis {FCS} during reentry,'' in \emph{Modelling,
  Robustness and Sensitivity Reduction in Control Systems}.\hskip 1em plus
  0.5em minus 0.4em\relax Springer, 1987.

\bibitem{balas94}
G.~Balas and J.~Doyle, ``Control of lightly damped, flexible modes in the
  controller crossover region,'' \emph{Journal of Guidance, Control, and
  Dynamics}, vol.~17, no.~2, pp. 370--377, 1994.

\bibitem{lind94}
R.~Lind, G.~Balas, and A.~Packard, ``Evaluating {DK} iteration for control
  design,'' in \emph{Proceedings of American Control Conference}, vol.~3.\hskip
  1em plus 0.5em minus 0.4em\relax IEEE, 1994, pp. 2792--2797.

\bibitem{packard93asme}
A.~Packard, J.~Doyle, and G.~Balas, ``Linear, multivariable robust control with
  a $\mu$ perspective,'' \emph{ASME Journal of Dynamic Systems, Measurement,
  and Control}, vol. 115, no.~2B, pp. 426--438, 1993.

\bibitem{packard94}
A.~Packard, ``Gain scheduling via linear fractional transformations,''
  \emph{Systems \& control letters}, vol.~22, no.~2, pp. 79--92, 1994.

\bibitem{dahleh94}
M.~A. Dahleh and I.~J. Diaz-Bobillo, \emph{Control of uncertain systems: a
  linear programming approach}.\hskip 1em plus 0.5em minus 0.4em\relax
  Prentice-Hall, Inc., 1994.

\bibitem{iglesias91}
P.~A. Iglesias and K.~Glover, ``State-space approach to discrete-time,
  {$H_\infty$} control,'' \emph{International Journal of Control}, vol.~54,
  no.~5, pp. 1031--1073, 1991.

\bibitem{limebeer89}
D.~Limebeer, M.~Green, and D.~Walker, ``Discrete-time {$H_\infty$} control,''
  in \emph{Proceedings of the 28th IEEE Conference on Decision and
  Control}.\hskip 1em plus 0.5em minus 0.4em\relax IEEE, 1989, pp. 392--396.

\bibitem{stoorvogel90}
A.~A. Stoorvogel, \emph{The {$H_\infty$} control problem: a state space
  approach}.\hskip 1em plus 0.5em minus 0.4em\relax Citeseer, 1990.

\bibitem{gahinet94}
P.~Gahinet and P.~Apkarian, ``A linear matrix inequality approach to
  {$H_\infty$} control,'' \emph{International journal of robust and nonlinear
  control}, vol.~4, no.~4, pp. 421--448, 1994.

\bibitem{doyle89}
J.~Doyle, K.~Glover, P.~Khargonekar, and B.~Francis, ``State-space solutions to
  standard $h_2$ and $h_\infty$ control problems,'' \emph{IEEE Transactions on
  Automatic Control}, vol.~34, no.~8, pp. 831--847, 1989.

\bibitem{goel21arXiv}
G.~Goel and B.~Hassibi, ``Regret-optimal estimation and control,'' \emph{arXiv
  preprint arXiv:2106.12097}, 2021.

\bibitem{green12}
M.~Green and D.~Limebeer, \emph{Linear Robust Control}.\hskip 1em plus 0.5em
  minus 0.4em\relax Courier Corporation, 2012.

\bibitem{safonov80}
M.~Safonov, \emph{Stability and Robustness of Multivariable Feedback
  Systems}.\hskip 1em plus 0.5em minus 0.4em\relax MIT Press, 1980.

\bibitem{doyle82}
J.~Doyle, ``Analysis of feedback systems with structured uncertainties,''
  \emph{IEE Proceedings D - Control Theory and Applications}, vol. 129, no.~6,
  pp. 242 -- 250, 1982.

\bibitem{packard93}
A.~Packard and J.~Doyle, ``The complex structured singular value,''
  \emph{Automatica}, vol.~29, no.~1, pp. 71--109, 1993.

\bibitem{boyd18}
S.~Boyd and L.~Vandenberghe, \emph{Introduction to Applied Linear Algebra:
  Vectors, Matrices, and Least Squares}.\hskip 1em plus 0.5em minus 0.4em\relax
  Cambridge University Press, 2018.

\bibitem{matlab22}
Matlab, ``Demo: Robust control of active suspension,'' 2022.

\bibitem{douglas20}
B.~Douglas, ``{(Matlab Tech Talks)} robust control design: {$H_\infty$} to
  $\mu$ synthesis,''
  https://www.mathworks.com/videos/robust-control-part-5-h-infinity-and-mu-synthesis-1586760454181.html,
  2020.

\bibitem{kailath00}
T.~Kailath, A.~H. Sayed, and B.~Hassibi, \emph{Linear estimation}.\hskip 1em
  plus 0.5em minus 0.4em\relax Prentice Hall, 2000.

\bibitem{katayama92}
T.~Katayama, ``A simple state-space approach to spectral factorization for
  discrete-time system,'' \emph{Control-Theory and Advanced Technology},
  vol.~8, no.~3, pp. 647--657, 1992.

\bibitem{demmel97}
J.~W. Demmel, \emph{Applied numerical linear algebra}.\hskip 1em plus 0.5em
  minus 0.4em\relax SIAM, 1997.

\bibitem{redheffer59}
R.~Redheffer, ``Inequalities for a matrix riccati equation,'' \emph{Journal of
  Mathematics and Mechanics}, pp. 349--367, 1959.

\bibitem{khalil5}
H.~K. Khalil, \emph{Nonlinear control}.\hskip 1em plus 0.5em minus 0.4em\relax
  Pearson New York, 2015, vol. 406.

\bibitem{zames66}
G.~Zames, ``On the input-output stability of time-varying nonlinear feedback
  systems--part ii: Conditions involving circles in the frequency plane and
  sector nonlinearities,'' \emph{IEEE transactions on automatic control},
  vol.~11, no.~3, pp. 465--476, 1966.

\bibitem{boyd85}
S.~Boyd and C.~Desoer, ``Subharmonic functions and performance bounds on linear
  time-invariant feedback systems,'' \emph{IMA Journal of Mathematical control
  and Information}, vol.~2, no.~2, pp. 153--170, 1985.

\end{thebibliography}

\appendix

\subsection{Discrete-Time Spectral Factorization}
\label{app:DTSF}

This appendix summarizes existing results for discrete-time spectral
factorizations.  Good overviews can be found in Section 8.3 of
\cite{kailath00} or Chapter 13 of \cite{hassibi99}. The specific
construction given here for the spectral factorization, stated as
Lemma~\ref{lem:DTSF} below, is based on results in
\cite{katayama92}. 


Consider a discrete-time, linear time-invariant system $\hat{P}$ with
the following state-space representation:
\begin{align}
  \label{eq:Phat}
  \begin{split}
    \hat{x}_{t+1} & = \hat{A} \, \hat{x}_t + \hat{B} \, d_t \\
    \hat{e}_t & = \hat{C} \, \hat{x}_t + \hat{D} \, d_t.
  \end{split}
\end{align}
The input/output dimensions of $\hat{P}$ are $n_{\hat{e}} \times n_d$.
We assume that $\hat{P}$ is stable although not necessarily causal.
An input $d\in \ell_2$ generates a state $\hat{x} \in \ell_2$ and
output $\hat{e} \in \ell_2$. The boundary conditions for the state are
$\hat{x}_{-\infty}=0$ and $\hat{x}_\infty=0$.\footnote{The state for
  the causal (stable) dynamics is initialized to 0 at $t=-\infty$ and
  converges back to 0 as $t\to \infty$ when $d\in \ell_2$.  Similarly,
  the non-causal (stable) dynamics are initialized to 0 at $t=+\infty$
  and converge back to 0 as $t\to -\infty$ when $d\in \ell_2$.}  We'll
denote $\hat{e}=\hat{P}d$ as the output generated by the input
$d\in \ell_2$ subject to these boundary conditions.  Let
$\hat{J}:= \langle \hat{e}, \hat{e} \rangle$ be the resulting cost.

The appendix is rewrites $\hat{J}$ in a more useful form
using two ingredients.  The first ingredient is the para-Hermitian
conjugate of $\hat{P}$, denoted $\hat{P}^\sim$, and defined as
follows:
\begin{align}
  \label{eq:PhatTil}
  \begin{split}
    \hat{A}^\top \bar{x}_{t+1} & = \bar{x}_t - \hat{C}^\top \,\bar{e}_t \\
    \bar{d}_t & = \hat{B}^\top \, \bar{x}_{t+1} + \hat{D}^\top \, \bar{e}_t.
  \end{split}
\end{align}
The input/output dimensions for $\hat{P}^\sim$ are
$n_d \times n_{\hat{e}}$.  If $\bar{e}_t \in \ell_2$ then the state of
$\hat{P}^\sim$ satisfies the boundary conditions $\bar{x}_{-\infty}=0$
and $\bar{x}_\infty=0$.  The para-Hermitian conjugate $\hat{P}^\sim$
also satisfies the adjoint property stated in the next lemma.

\begin{lemma}
\label{lem:PAdj}
The para-Hermitian conjugate $\hat{P}^\sim$ satisfies
satisfies $\langle \hat{P} d, \bar{e} \rangle
= \langle d, \hat{P}^\sim \bar{e} \rangle$ for all 
$d$, $\bar{e} \in \ell_2$.
\end{lemma}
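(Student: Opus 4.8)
The plan is to prove the adjoint identity by a summation-by-parts (telescoping) argument organized around the cross term $\hat{x}_t^\top \bar{x}_t$, where $\hat{x}$ is the state of $\hat{P}$ driven by $d$ and $\bar{x}$ is the state of $\hat{P}^\sim$ driven by $\bar{e}$. Writing $\hat{e}=\hat{P}d$ and $\bar{d}=\hat{P}^\sim\bar{e}$, the claim becomes $\sum_t \hat{e}_t^\top \bar{e}_t = \sum_t d_t^\top \bar{d}_t$. First I would expand each per-step product using the two state-space realizations: substitute $\hat{e}_t = \hat{C}\hat{x}_t + \hat{D}d_t$ on the left and $\bar{d}_t = \hat{B}^\top \bar{x}_{t+1} + \hat{D}^\top \bar{e}_t$ on the right. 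The two feedthrough contributions $d_t^\top \hat{D}^\top \bar{e}_t$ then cancel, leaving the per-step difference $\hat{e}_t^\top \bar{e}_t - d_t^\top \bar{d}_t = \hat{x}_t^\top \hat{C}^\top \bar{e}_t - d_t^\top \hat{B}^\top \bar{x}_{t+1}$.

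The key algebraic step is to identify this per-step difference with the telescoping increment $\hat{x}_t^\top \bar{x}_t - \hat{x}_{t+1}^\top \bar{x}_{t+1}$. Here I would use the $\hat{P}^\sim$ state equation rewritten as $\hat{C}^\top \bar{e}_t = \bar{x}_t - \hat{A}^\top \bar{x}_{t+1}$ to replace the term $\hat{x}_t^\top \hat{C}^\top \bar{e}_t$, and then use the $\hat{P}$ state equation in the form $\hat{A}\hat{x}_t = \hat{x}_{t+1} - \hat{B}d_t$ to rewrite $\hat{x}_t^\top \hat{A}^\top \bar{x}_{t+1} = (\hat{x}_{t+1} - \hat{B}d_t)^\top \bar{x}_{t+1}$. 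After these substitutions the two copies of $d_t^\top \hat{B}^\top \bar{x}_{t+1}$ cancel, and one is left exactly with $\hat{x}_t^\top \bar{x}_t - \hat{x}_{t+1}^\top \bar{x}_{t+1}$, as desired.

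The final step is to sum this increment over $t=-\infty$ to $\infty$. On a symmetric truncation $-N \le t \le N$ the partial sum collapses to $\hat{x}_{-N}^\top \bar{x}_{-N} - \hat{x}_{N+1}^\top \bar{x}_{N+1}$, and I would invoke the stated boundary conditions $\hat{x}_{\pm\infty}=0$ and $\bar{x}_{\pm\infty}=0$, which hold because $d,\bar{e}\in\ell_2$ force the causal and non-causal components of each state into $\ell_2$ and hence to zero at both ends, to conclude that both endpoint contributions vanish in the limit. The main point to handle with care is the legitimacy of this two-sided telescoping: since $\hat{P}$ and $\hat{P}^\sim$ each mix causal and non-causal dynamics, I would emphasize that $\ell_2$-membership of the states makes $\hat{x}_t^\top \bar{x}_t$ decay at both infinities, justifying passage to the limit. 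This gives $\langle \hat{P}d,\bar{e}\rangle - \langle d,\hat{P}^\sim\bar{e}\rangle = 0$, completing the proof.
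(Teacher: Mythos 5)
Your proposal is correct and follows essentially the same route as the paper: substitute the output equations, use the two state equations to convert the per-step difference into the telescoping increment $\hat{x}_t^\top \bar{x}_t - \hat{x}_{t+1}^\top \bar{x}_{t+1}$, and kill the endpoint terms with the boundary conditions $\hat{x}_{\pm\infty}=\bar{x}_{\pm\infty}=0$. The only cosmetic difference is that you work with the difference of the two inner products per step while the paper transforms one side into the other; your extra remark justifying the two-sided limit via $\ell_2$-membership of the states is a welcome addition.
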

\begin{proof}
First, substitute for the output equation of $\hat{P}$:
\begin{align*}
\langle \hat{P} d, \bar{e} \rangle =  
\sum_{t=-\infty}^\infty \left(\hat{C} \hat{x}_t + \hat{D} d_t \right)^\top 
\bar{e}_t.
\end{align*}
Next, substitute for $\hat{C}^\top \bar{e}_t$ using the
dynamics of $\hat{P}^\sim$:
\begin{align*}
\langle \hat{P} d, \bar{e} \rangle =  
\sum_{t=-\infty}^\infty \hat{x}_t^\top (\bar{x}_t-\hat{A}^\top \bar{x}_{t+1})  
+  d_t^\top \hat{D}^\top  \bar{e}_t.
\end{align*}
Finally, substitute for $\hat{A} \hat{x}_t$ using the dynamics
of $\hat{P}$:
\begin{align*}
\langle \hat{P} d, \bar{e} \rangle =  
\sum_{t=-\infty}^\infty \hat{x}_t^\top \bar{x}_t
- \hat{x}_{t+1}^\top \bar{x}_{t+1}  
+ d_t^\top (\hat{B}^\top \bar{x}_{t+1} + \hat{D}^\top \bar{e}_t).
\end{align*}
The first two terms are a telescoping sum. These terms sum to
zero by the boundary conditions $\hat{x}_{-\infty}=\hat{x}_\infty=0$.  
Substitute for $\bar{d}_t$ using the output equation for $\hat{P}^\sim$:
\begin{align*}
\langle \hat{P} d, \bar{e} \rangle 
= \sum_{t=-\infty}^\infty d_t^\top \bar{d}_t
= \langle d, \hat{P}^\sim \bar{e} \rangle.
\end{align*}
\end{proof}

The second ingredient used to rewrite the cost $\hat{J}$ is the
spectral factorization as defined below.

\begin{defin}
  A square $n_d \times n_d$ system $F$ is a spectral factor of
  $\hat{P}^\sim \hat{P}$ if: (i) $\hat{P}^\sim \hat{P} = F^\sim F$,
  (ii) $F$ is stable, causal, and invertible, and (iii)
  $F^{-1}$ is  stable and causal.  Moreover, $F^\sim F$ is
  called a spectral factorization of $\hat{P}^\sim \hat{P}$.
\end{defin}

We can now use the two ingredients to rewrite the cost
$\hat{J}:= \langle \hat{e}, \hat{e} \rangle$.  First, use the adjoint
property in Lemma~\ref{lem:PAdj} to write the cost as:
$\hat{J} = \langle d, \hat{P}^\sim \hat{P} d \rangle$. Next, if a
spectral factor $F$ exists then $\hat{P}^\sim\hat{P} = F^\sim F$.
Thus another application of the adjoint property in
Lemma~\ref{lem:PAdj} yields $\hat{J}:= \langle F d, Fd \rangle$. In
other words, we can compute $\hat{J}$ using the spectral factor
$F$. The dynamics of $F$ are stable/causal and with a stable/causal
inverse.  The next lemma provides conditions for the existence of a
spectral factorization of $\hat{P}^\sim \hat{P}$. An explicit,
state-space construction is also given for the spectral factor (when
it exists). This is essentially a restatement of Theorems 4.1 and 4.2
in \cite{katayama92}.

\begin{lemma}
\label{lem:DTSF}
Let $(\hat{A},\hat{B},\hat{C},\hat{D})$ be given and define 
$\hat{Q}:=\hat{C}^\top \hat{C}$, $\hat{S}:= \hat{C}^\top \hat{D}$,
and $\hat{R} = \hat{D}^\top \hat{D}$.

Assume: (i) $\hat{R} \succ 0$, (ii) $(\hat{A},\hat{B})$ is
stabilizable, (iii) $\hat{A}-\hat{B}\hat{R}^{-1}\hat{S}^\top$ is
nonsingular, (iv)
$\bsmtx \hat{A}-e^{j\theta} I & \hat{B} \\ \hat{C} & \hat{D} \esmtx$
has full column rank for all $\theta \in [0,2\pi]$, (v) $\hat{A}$ has
no eigenvalues on the unit circle, and (vi) $\hat{A}$ is nonsingular.
Then:
\begin{enumerate}
\item There is a unique stabilizing solution $\hat{X} \ge 0$ to the
  following DARE:
  \begin{align}
    \label{eq:DAREXhat}
    0 = & \hat{X}  - \hat{A}^\top \hat{X} \hat{A}  - \hat{Q} \\
    \nonumber
        & \, + (\hat{A}^\top \hat{X}\hat{B} + \hat{S})\, 
           \hat{H}^{-1} \, 
          (\hat{A}^\top \hat{X}\hat{B} + \hat{S})^\top,
  \end{align}
  where $\hat{H}:=\hat{R}+\hat{B}^\top \hat{X} \hat{B}\succ 0$. The gain
  $\hat{K}_x:=\hat{H}^{-1}(\hat{A}^\top \hat{X} \hat{B}+\hat{S})^\top$
  is stabilizing, i.e.  $\hat{A}-\hat{B}\hat{K}_x$ is a Schur matrix.
  Moreover, $(\hat{A}-\hat{B}\hat{K}_x)$ is nonsingular.

\vspace{0.05in}
\item There is a unique stabilizing solution $\hat{Y} \ge 0$ to the
  following DARE:
  \begin{align}
    \label{eq:DAREYhat}
    0  = \hat{Y}  - \hat{A} \hat{Y} \hat{A}^\top   
       + (\hat{A} \hat{Y}\hat{K}_x^\top) \, 
            \hat{W}^{-1} \,
          (\hat{A} \hat{Y}\hat{K}_x^\top)^\top,
  \end{align}  
  where
  $\hat{W}:=\hat{H}^{-1} + \hat{K}_x \hat{Y} \hat{K}_x^\top \succ 0$.
  The gain
  $\hat{K}_y:=\hat{W}^{-1}(\hat{A} \hat{Y} \hat{K}_x^\top)^\top$ is
  stabilizing, i.e.  $\hat{A}^\top-\hat{K}_x^\top\hat{K}_y$ is a Schur
  matrix.

\vspace{0.05in}
\item Define $A_F := \hat{A}-\hat{K}_y^\top\hat{K}_x$,
  $B_F := \hat{B}-\hat{K}_y^\top$,
  $C_F = \hat{W}^{-\frac{1}{2}} \hat{K}_x$, and
  $D_F:=\hat{W}^{-\frac{1}{2}}$.  A spectral factorization of
  $\hat{P}^\sim \hat{P}$ is $F^\sim F$ where the spectral factor $F$
  is:
  \begin{align}
    \label{eq:FSF}
    \begin{split}
    \tilde{x}_{t+1} & = A_F \, \tilde{x}_t  + B_F \, d_t \\
    \tilde{e}_{t} & = C_F \, \tilde{x}_t   +  D_F  \, d_t.
  \end{split}
  \end{align} 
\end{enumerate}
\end{lemma}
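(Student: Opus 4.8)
The plan is to handle the three parts in order, reducing Parts~1 and 2 to the already-established Lemma~\ref{lem:DARE} and then assembling the factorization identity in Part~3 by two successive completion-of-squares arguments.

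\textbf{Part 1.} The DARE \eqref{eq:DAREXhat} is exactly the equation of Lemma~\ref{lem:DARE} with $(A,B_u,C_e,D_{eu})$ replaced by $(\hat{A},\hat{B},\hat{C},\hat{D})$, and assumptions (i)--(iv) match verbatim. First I would therefore invoke Lemma~\ref{lem:DARE} to obtain the stabilizing solution $\hat{X}\succeq 0$, the gain $\hat{K}_x$ with $\hat{A}-\hat{B}\hat{K}_x$ Schur, and nonsingularity of $\hat{A}-\hat{B}\hat{K}_x$. Assumptions (v)--(vi) play no role here; they enter only in Part~2.

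\textbf{Part 2.} The key observation is that \eqref{eq:DAREYhat} is itself a DARE of the form in Lemma~\ref{lem:DARE}, but for the \emph{dual} data $\tilde{A}=\hat{A}^\top$, $\tilde{B}=\hat{K}_x^\top$, $\tilde{Q}=0$, $\tilde{S}=0$, $\tilde{R}=\hat{H}^{-1}$ (so that $\tilde{R}+\tilde{B}^\top\hat{Y}\tilde{B}=\hat{H}^{-1}+\hat{K}_x\hat{Y}\hat{K}_x^\top=\hat{W}$ and the induced gain is exactly $\hat{K}_y$, with stabilizing matrix $\hat{A}^\top-\hat{K}_x^\top\hat{K}_y$). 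I would then verify the four hypotheses for this dual problem: (i) $\tilde{R}=\hat{H}^{-1}\succ 0$ since $\hat{H}\succ 0$; (iii) $\tilde{A}=\hat{A}^\top$ is nonsingular by assumption (vi); (iv) because $\tilde{Q}=0$ forces $\tilde{C}=0$, the rank matrix becomes block upper triangular and full column rank reduces to $\hat{A}^\top-e^{j\theta}I$ being nonsingular for all $\theta$, i.e. assumption (v) that $\hat{A}$ has no unit-circle eigenvalues; and (ii) stabilizability of $(\hat{A}^\top,\hat{K}_x^\top)$, equivalently detectability of $(\hat{A},\hat{K}_x)$, which follows from Part~1: if $\hat{A}v=\lambda v$ with $\hat{K}_xv=0$ then $(\hat{A}-\hat{B}\hat{K}_x)v=\lambda v$, forcing $|\lambda|<1$ since $\hat{A}-\hat{B}\hat{K}_x$ is Schur. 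Applying Lemma~\ref{lem:DARE} to the dual data yields $\hat{Y}\succeq 0$ with $\hat{A}^\top-\hat{K}_x^\top\hat{K}_y$ Schur.

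\textbf{Part 3.} The minimum-phase and stability properties of the proposed $F$ fall out of Parts~1--2 directly: $A_F=\hat{A}-\hat{K}_y^\top\hat{K}_x$ is the transpose of the Schur matrix from Part~2 (so $F$ is stable and causal), while a one-line computation gives $A_F-B_F D_F^{-1}C_F=\hat{A}-\hat{B}\hat{K}_x$, Schur by Part~1 (so $F^{-1}$ is stable and causal). It then remains to prove $\hat{P}^\sim\hat{P}=F^\sim F$, which I would split into two stages. The first stage repeats the completion-of-squares from the proof of Theorem~\ref{thm:K0}: the DARE \eqref{eq:DAREXhat} is equivalent to the identity $\bsmtx \hat{C}^\top \\ \hat{D}^\top \esmtx \bsmtx \hat{C} & \hat{D} \esmtx = \bsmtx \hat{K}_x^\top \\ I \esmtx \hat{H} \bsmtx \hat{K}_x & I \esmtx + \bsmtx \hat{X} & 0 \\ 0 & 0 \esmtx - \bsmtx \hat{A}^\top \\ \hat{B}^\top \esmtx \hat{X} \bsmtx \hat{A} & \hat{B} \esmtx$; evaluating $\langle\hat{e},\hat{e}\rangle$ along $\hat{x}_{t+1}=\hat{A}\hat{x}_t+\hat{B}d_t$ and cancelling the telescoping $\hat{X}$-terms via the boundary conditions $\hat{x}_{\pm\infty}=0$ gives $\hat{J}=\langle Vd,\hat{H}Vd\rangle$, i.e. $\hat{P}^\sim\hat{P}=V^\sim\hat{H}V$ where $V$ has realization $(\hat{A},\hat{B},\hat{K}_x,I)$. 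Note $V$ is minimum-phase (its inverse has state matrix $\hat{A}-\hat{B}\hat{K}_x$) but generally \emph{not} stable, since $\hat{A}$ need not be Schur.

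\textbf{The main obstacle} is the second stage: proving $V^\sim\hat{H}V=F^\sim F$, which amounts to reflecting the anti-stable poles of $V$ (the eigenvalues of $\hat{A}$ outside the unit disk) back inside while preserving the value on $|z|=1$. This is exactly the role of $\hat{Y}$: I would use \eqref{eq:DAREYhat} as the algebraic certificate for a second completion-of-squares argument, now organized around the backward dynamics governed by $\hat{A}^\top$, to check that $F$ with the stated $(A_F,B_F,C_F,D_F)$ reproduces $V^\sim\hat{H}V$. This is the one genuinely new computation, and I expect the careful bookkeeping of the para-Hermitian conjugate (Lemma~\ref{lem:PAdj}) under the two-sided boundary conditions to be the delicate point. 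Since the result is known, an acceptable alternative for this stage is to align the present notation with Theorems~4.1--4.2 of \cite{katayama92} and invoke them, the only remaining task being to confirm that assumptions (i)--(vi) imply Katayama's hypotheses.
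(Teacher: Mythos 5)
Your overall route is the same as the paper's. Parts~1 and~2 are handled exactly as in the paper by two applications of Lemma~\ref{lem:DARE}, the second with the dual data $(\hat{A}^\top,\hat{K}_x^\top,0,\hat{H}^{-1/2},0,\hat{H}^{-1},0)$; your PBH argument for stabilizability of $(\hat{A}^\top,\hat{K}_x^\top)$ is a valid alternative to the paper's one-liner that $\hat{B}^\top$ is already a stabilizing injection since $\hat{A}^\top-\hat{K}_x^\top\hat{B}^\top$ is Schur. Your first stage of Part~3, giving $\hat{P}^\sim\hat{P}=V^\sim\hat{H}V$ with $V=(\hat{A},\hat{B},\hat{K}_x,I)$, is the paper's intermediate factor $\hat{F}=\hat{H}^{1/2}V$, and your stability/minimum-phase bookkeeping for $F$ (that $A_F$ is Schur by Part~2 and $A_F-B_FD_F^{-1}C_F=\hat{A}-\hat{B}\hat{K}_x$ is Schur by Part~1) matches the paper.

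The gap is that the step you yourself flag as ``the main obstacle'' --- the identity $V^\sim\hat{H}V=F^\sim F$ --- is only described, not proved, and it is the substantive content of Statement~3. The paper's mechanism is concrete: it uses nonsingularity of $\hat{A}$ (so assumption (vi) is needed in Part~3 as well, not only in Part~2 as you assert) to write $\hat{F}^\sim$ in explicit state-space form, builds the $2n$-state cascade realization of $\hat{F}^\sim\hat{F}$, and applies the coordinate change $T=\bsmtx I & \hat{Y}\\ 0 & I\esmtx$. The DARE \eqref{eq:DAREYhat} enters through two derived identities, $\hat{Y}\hat{A}^{-\top}\hat{K}_x^\top\hat{H}=\hat{K}_y^\top$ and $\hat{Y}\hat{A}^{-\top}=A_F\hat{Y}$, which block-triangularize the transformed realization and show it coincides with the cascade realization of $F^\sim F$. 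Your proposed ``second completion of squares organized around the backward dynamics'' is plausible in spirit but is not a routine adaptation of the Theorem~\ref{thm:K0} argument (there is no cost being minimized at this stage, only an all-pass re-factorization of the antistable modes of $\hat{A}$), so as written the proof is incomplete at precisely the point where the work lies. Your fallback of aligning notation with Theorems~4.1--4.2 of the cited Katayama reference is legitimate and is essentially what the paper's self-contained computation formalizes.
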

\begin{proof}
  First, note that Statement 1 corresponds to the DARE in
  Lemma~\ref{lem:DARE} with $(A,B_u,C_e,D_{eu},Q$, $R,S)$ replaced
  by $(\hat{A},\hat{B},\hat{C},\hat{D},\hat{Q},\hat{R},\hat{S})$.
  Thus Statement 1 follows from assumptions (i)-(iv) and
  Lemma~\ref{lem:DARE}.

  Next, note that Statement 2 corresponds to the DARE in
  Lemma~\ref{lem:DARE} with $(A,B_u,C_e,D_{eu},Q,R,S)$ replaced by
  $(\hat{A}^\top,
  \hat{K}_x^\top,0,\hat{H}^{-\frac{1}{2}},0,\hat{H}^{-1},0)$.
  Moreover, we have: (a) $\hat{H}^{-1} \succ 0$, (b)
  $(\hat{A}^\top,\hat{K}_x^\top)$ is stabilizable because
  $(\hat{A}-\hat{B}\hat{K}_x)^\top$ is a Schur matrix, (c) $\hat{A}$
  is nonsingular by assumption (vi), and (d)
  $\bsmtx \hat{A}^\top-e^{j\theta} I & \hat{K}_x^\top \\ 0 &
  \hat{H}^{-\frac{1}{2}} \esmtx$ has full column rank for all
  $\theta \in [0,2\pi]$ by assumption (v) and
  $\hat{H}^{-\frac{1}{2}}\succ 0$.  Thus Statement 2 follows from
  assumptions (i)-(vi) and Lemma~\ref{lem:DARE}.

  For statement 3, we first show that
  $\hat{P}^\sim \hat{P} = F^\sim F$. Use the output equation
  of $\hat{P}$ to express the cost as:
  \begin{align*}
    \hat{J} = \langle \hat{e}, \hat{e} \rangle
       = \sum_{t=-\infty}^\infty \bmtx \hat{x}_t \\ d_t \emtx^\top 
            \bmtx \hat{Q} &  \hat{S} \\  \hat{S}^\top &  \hat{R} \emtx 
             \bmtx \hat{x}_t \\ d_t \emtx.
  \end{align*}
  Substitute for $\hat{Q}$ using the DARE \eqref{eq:DAREXhat} to show:
  \begin{align*}
    \bsmtx \hat{Q} &  \hat{S} \\  \hat{S}^\top &  \hat{R} \esmtx =
    \bsmtx \hat{K}_x^\top \\ I \esmtx \hat{H} \bsmtx \hat{K}_x & I \esmtx
    + \bsmtx \hat{X} & 0 \\ 0 & 0 \esmtx   
    -\bsmtx \hat{A}^\top \\ \hat{B}^\top \esmtx 
         \hat{X} \bsmtx \hat{A} & \hat{B} \esmtx.
  \end{align*}
  Thus the cost is equal to:
{\footnotesize
  \begin{align*}
    \hat{J} = \sum_{t=-\infty}^\infty \hat{x}_t^\top \hat{X} \hat{x}_t
             - \hat{x}_{t+1}^\top \hat{X} \hat{x}_{t+1} +      
    \bmtx \hat{x}_t \\ d_t \emtx^\top 
    \bsmtx \hat{K}_x^\top \\ I \esmtx \hat{H} \bsmtx \hat{K}_x & I \esmtx
    \bmtx \hat{x}_t \\ d_t \emtx.
  \end{align*}
}

\noindent
The first two terms are a telescoping sum. These terms sum to zero by
the boundary conditions $\hat{x}_{-\infty}=\hat{x}_\infty=0$.  Thus
the cost is equal to $\hat{J} = \langle \hat{F} d,\hat{F} d \rangle$
where $\hat{F}$ is the following intermediate system:
  \begin{align}
    \label{eq:Fhat}
    \begin{split}
      \hat{x}_{t+1} & = \hat{A} \, \hat{x}_t + \hat{B} \, d_t \\
      \tilde{e}_t &  =  \hat{H}^{\frac{1}{2}} \hat{K}_x \, \hat{x}_t 
             + \hat{H}^{\frac{1}{2}} \, d_t.
    \end{split}
  \end{align}
  Note that $\hat{F}$ is a square $n_d\times n_d$ system and
  invertible. The state matrix for $\hat{F}^{-1}$ is
  $\hat{A}-\hat{B}\hat{K}_x$ which is a Schur matrix by Statement 1.
  Hence $\hat{F}^{-1}$ is stable and causal.  However, $\hat{A}$ may
  not necessarily be a Schur matrix so one additional step is required
  to obtain the spectral factor $F$.

  By Lemma~\ref{lem:PAdj}, the cost is equal to
  $\hat{J} = \langle d, \hat{F}^\sim \hat{F} d \rangle$.
  The matrix $\hat{A}$ is assumed to be nonsingular.  This can be used
  to express the para-Hermitian conjugate $\hat{F}^\sim$ as:
  \begin{align}
    \label{eq:FhatTil}
    \begin{split}
      \bar{x}_{t+1} & =  \hat{A}^{-\top} \, \bar{x}_t -
      \hat{A}^{-\top} \hat{K}_x^\top \hat{H}^{\frac{1}{2}} \,\bar{e}_t \\
      \bar{d}_t & = (\hat{B}^\top \hat{A}^{-\top}) \, \bar{x}_t 
           + (I-\hat{B}^\top \hat{A}^{-\top} \hat{K}_x^\top)
             \hat{H}^{\frac{1}{2}} \, \bar{e}_t.
           \end{split}
  \end{align}
  The system $\hat{F}^\sim \hat{F}$ corresponds to the serial
  connection of $\hat{F}$ and $\hat{F}^\sim$, i.e. combine
  \eqref{eq:Fhat} and \eqref{eq:FhatTil} with
  $\bar{e}_t = \tilde{e}_t$.  This yields a state-space model for
  $\hat{F}^\sim \hat{F}$:
  {\footnotesize
  \begin{align*}
    \bmtx \hat{x}_{t+1} \\ \bar{x}_{t+1} \emtx & = 
     \bmtx \hat{A} & 0  \\ -\hat{A}^{-\top}\hat{K}_x^\top\hat{H}\hat{K}_x 
    & \hat{A}^{-\top} \emtx \, \bmtx \hat{x}_t \\ \bar{x}_t \emtx 
   + \bmtx \hat{B} \\ -\hat{A}^{-\top}\hat{K}_x^\top\hat{H} \emtx
     d_t \\
   \bar{e}_t & 
    = \bmtx (I-\hat{B}^\top \hat{A}^{-\top} \hat{K}_x^\top)\hat{H}\hat{K}_x 
          & \hat{B}^\top \hat{A}^{-\top} \emtx
         \bmtx \hat{x}_t \\ \bar{x}_t \emtx  \\
      & \hspace{1.5in}
      + (I-\hat{B}^\top \hat{A}^{-\top} \hat{K}_x^\top)\hat{H} \, d_t. 
  \end{align*}
}

Next, apply the following coordinate transformation:
\begin{align*}
  \bmtx \tilde{x}_t \\ \bar{x}_t \emtx =
  \bmtx I & \hat{Y} \\ 0 & I \emtx
         \bmtx \hat{x}_t \\ \bar{x}_t \emtx :=
  T \, \bmtx \hat{x}_t \\ \bar{x}_t \emtx. 
\end{align*}
Note that $T^{-1} = \bsmtx I & -\hat{Y} \\ 0 & I\esmtx$.
We can simplify the results of the coordinate transformation
by using the DARE~\eqref{eq:DAREYhat} to show:
\begin{align*}
  \hat{Y} \hat{A}^{-\top}\hat{K}_x^\top\hat{H} & = 
     \left( \hat{A}\hat{Y} 
     - \hat{A}\hat{Y} \hat{K}_x^\top \hat{W}^{-1} \hat{K}_x \hat{Y} \right)
       \hat{K}_x^\top \hat{H} \\
     & = \hat{A}\hat{Y} \hat{K}_x^\top \hat{W}^{-1}
       \left( \hat{W} - \hat{K}_x \hat{Y} \hat{K}_x^\top \right) \hat{H}
       = \hat{K}_y^\top.
\end{align*}
Thus $\hat{A} -\hat{Y} \hat{A}^{-\top}\hat{K}_x^\top\hat{H}\hat{K}_x$
is equal to $A_F = \hat{A}-\hat{K}_y^\top \hat{K}_x$.  Similarly, we
can use the DARE~\eqref{eq:DAREYhat} to show
$\hat{Y}\hat{A}^{-\top} = A_F\hat{Y}$.  Apply the transformation $T$
to the state matrix of $\hat{F}^\sim \hat{F}$ using these relations:
\begin{align*}
  &  T \bsmtx \hat{A} & 0  \\ 
  -\hat{A}^{-\top}\hat{K}_x^\top\hat{H}\hat{K}_x  
  & \hat{A}^{-\top} \esmtx T^{-1} 
    =   \bsmtx A_F        & A_F \hat{Y}   \\ 
  -\hat{A}^{-\top}\hat{K}_x^\top\hat{H}\hat{K}_x  
  & \hat{A}^{-\top} \esmtx T^{-1} \\
  & \hspace{0.5in}  =  \bsmtx A_F     & 0 \\
  -\hat{A}^{-\top}\hat{K}_x^\top\hat{H}\hat{K}_x \,\,
  &  \hat{A}^{-\top} (I +\hat{K}_x^\top\hat{H}\hat{K}_x \hat{Y})   \esmtx.
\end{align*}
The $(2,2)$ block can be simplified using
$A_F=\hat{A} -\hat{Y} \hat{A}^{-\top}\hat{K}_x^\top\hat{H}\hat{K}_x$
and $\hat{Y}\hat{A}^{-\top} = A_F\hat{Y}$. Combining these two
expressions gives
$A_F (I +\hat{Y} \hat{K}_x^\top\hat{H}\hat{K}_x ) = \hat{A}$.  Hence
the lower right block is $A_F^{-\top}$. This can also be used to
simplify the $(1,2)$ block as follows:
{\footnotesize
\begin{align*}
  & -\hat{A}^{-\top}\hat{K}_x^\top\hat{H}\hat{K}_x = 
    -A_F^{-\top} (I +\hat{K}_x^\top\hat{H}\hat{K}_x \hat{Y})^{-1}
    \hat{K}_x^\top\hat{H}\hat{K}_x  \\
  &  = -A_F^{-\top} (I -\hat{K}_x^\top \hat{W}^{-1} \hat{K}_x \hat{Y} )
    \hat{K}_x^\top\hat{H}\hat{K}_x 
    = -A_F^{-\top}\hat{K}_x^\top\hat{W}^{-1}\hat{K}_x.
\end{align*}
}

\noindent
The second equality makes use of the matrix inversion lemma.


Similar simplifications can be made after applying $T$ to the
remaining state matrices of $\hat{F}^\sim \hat{F}$. Thus the
coordinate transformation gives the following realization for
$\hat{F}^\sim\hat{F}$:
{\footnotesize
  \begin{align*}
    \bmtx \tilde{x}_{t+1} \\ \bar{x}_{t+1} \emtx & = 
    \bmtx A_F & 0  \\ -A_F^{-\top}\hat{K}_x^\top\hat{W}^{-1}\hat{K}_x 
     & A_F^{-\top} \emtx \, \bmtx \hat{x}_t \\ \bar{x}_t \emtx +
    \bmtx B_F \\ -A_F^{-\top}\hat{K}_x^\top\hat{W}^{-1} \emtx d_t \\
    \bar{d}_t & 
     = \bmtx (I-B_F^\top A_F^{-\top} \hat{K}_x^\top)\hat{W}^{-1}\hat{K}_x 
           & B_F^\top A_F^{-\top} \emtx
          \bmtx \hat{x}_t \\ \bar{x}_t \emtx  \\
       & \hspace{1in}
       + (I-B_F^\top A_F^{-\top} \hat{K}_x^\top)\hat{W}^{-1} \, d_t. 
  \end{align*}
}It can be shown that this is also a realization for $F^\sim F$ where
$F$ is the system in Equation~\eqref{eq:FSF}. Hence we have shown that
$\hat{P}^\sim \hat{P} = \hat{F}^\sim \hat{F} = F^\sim F$.  

The proof is concluded by showing that $F$ satisfies the other
properties required of a spectral factor: $F$ is square,
$n_d\times n_d$ system that is stable/causal and with a stable/causal
inverse. The input and output dimensions of $F$ are equal to $n_d$ so
$F$ is square. Moreover, $A_F := \hat{A}-\hat{K}_y^\top\hat{K}_x$ is a
Schur matrix by Statement 2. Hence $F$ is stable and causal. Next,
$F^{-1}$ exists because $D_F$ is invertible and $F^{-1}$ is given by:
\begin{align}
  \begin{split}
  \tilde{x}_{t+1} & = (A_F-B_F D_F^{-1} C_F) \, \tilde{x}_t  
                  + (B_F D_F^{-1}) \, \tilde{e}_t \\
   d_t  & = -(D_F^{-1} C_F) \, \tilde{x}_t   
              +  D_F^{-1}  \, \tilde{e}_t.
\end{split}
\end{align}
The poles of $F^{-1}$ are given by the eigenvalues of:
\begin{align}
  A_F - B_F D_F^{-1} C_F = \hat{A}-\hat{B}\hat{K}_x.
\end{align}
Thus $A_F - B_F D_F^{-1} C_F$ is a Schur matrix by Statement 1
and hence $F^{-1}$ is stable/causal.   Thus $F$ is
a spectral factor of $\hat{P}^\sim \hat{P}$ as claimed.
\end{proof}


\subsection{Spectral Factorization for Regret Bound}
\label{app:SFforRegret}

This appendix gives a spectral factorization result tailored
for the regret bound in Section~\ref{sec:nomfeas}.  The
closed-loop dynamics with $K^o$, given in \eqref{eq:CLK0}, have the
form:
\begin{align}
\tag{\ref{eq:CLK0}, Revisited}
\begin{split}
\hat{x}_{t+1} & = \hat{A} \, \hat{x}_t + \hat{B} \, d_t \\
  \hat{e}_t & = \hat{C}\, \hat{x}_t + \hat{D} \, d_t,
\end{split}
\end{align}
with the state matrices:
\begin{align*}
&\hat{A}:=\bmtx \hat{A}_{11} & -B_u K_v\hat{A}_{11}^{-\top}
\\ 0 & \hat{A}_{11}^{-\top} \emtx,
&& \hspace{-0.1in} \hat{B}:= \bmtx B_d \\ -X B_d \emtx, \\
& \hat{C}:= \gamma_J \,\bmtx C_e-D_{eu}K_x & -D_{eu}K_v\hat{A}_{11}^{-\top} 
      \\ 0 & 0 \emtx,
&& \hat{D}:= \gamma_d \bmtx 0 \\ I \emtx.
\end{align*}
Here $X\succeq 0$ is the stabilizing solution of the
DARE \eqref{eq:DARE}:
\begin{align}
\tag{\ref{eq:DARE}, Revisited}
  0 & = X  - A^\top XA  - Q \\
\nonumber
    & + (A^\top XB_u+S)\, (R+B_u^\top XB_u)^{-1} \, (A^\top XB_u+S)^\top.
\end{align}
The stabilizing gain is $K_x$ and
$\hat{A}_{11} := A - B_u K_x\in \R^{n_x\times n_x}$ is a Schur,
nonsingular matrix. Let $\hat{P}$ denote these closed-loop dynamics
with state, input, and output dimension $2n_x$, $n_d$, and
$n_{\hat{e}}$.  Recall that the regret bound in
Equation~\ref{eq:RegretBnd} is $\| \hat{e}\|_2^2=\|\hat{P}d\|_2$. 

The next lemma is the main result in this appendix.  It provides a
spectral factorization for $\hat{P}^\sim \hat{P}$. This is used in the
main body of the text to rewrite the non-causal cost in terms of the
spectral factor.


\begin{lemma}
\label{lem:RegretSFVer2}
Assume that $(A,B_u,C_e,D_{eu})$ satisfy conditions $(i)-(iv)$ in
Lemma~\ref{lem:DARE} so that the DARE \eqref{eq:DARE} has a
stabilizing solution $X\succeq 0$. This stabilizing solution is used
to define the dynamics $\hat{P}$ from $d_t$ to $\hat{e}_t$ in
\eqref{eq:CLK0}.

If $\gamma_d>0$ and $(\hat{A}_{11}^{-\top},X B_d)$ is stabilizable
then there exists a square $n_d\times n_d$ LTI system $F$ such that:
(i) $\| \hat{e}\|_2^2 = \| F d\|_2^2$ for all $d \in \ell_2$, (ii) $F$
is stable, causal, and invertible, and (iii) $F^{-1}$ is square,
stable and causal.  Moreover, it is possible to construct $F$ with
state dimension $n_x$.
\end{lemma}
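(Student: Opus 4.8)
The plan is to obtain the spectral factor in two stages: first invoke the general construction of Lemma~\ref{lem:DTSF} applied to $\hat P$ to produce \emph{some} spectral factor satisfying properties (i)--(iii), and then argue separately that this factor admits a realization of order $n_x$ rather than the $2n_x$ one would expect from the state dimension of $\hat P$.

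For the first stage I would verify the six hypotheses of Lemma~\ref{lem:DTSF} for the quadruple $(\hat A,\hat B,\hat C,\hat D)$. Setting $\hat Q=\hat C^\top\hat C$, $\hat S=\hat C^\top\hat D$, $\hat R=\hat D^\top\hat D$, the structure $\hat D=\gamma_d\,\bsmtx 0\\ I\esmtx$ gives $\hat R=\gamma_d^2 I\succ 0$ (using $\gamma_d>0$) and, since the cost rows of $\hat C$ are disjoint from the identity rows of $\hat D$, also $\hat S=0$; hence condition (iii) reduces to nonsingularity of $\hat A$. The block-triangular form of $\hat A$, with $\hat A_{11}$ Schur and nonsingular and $\hat A_{11}^{-\top}$ its anti-stable inverse-transpose, immediately yields that $\hat A$ is nonsingular (vi) and has no eigenvalue on the unit circle (v). Stabilizability of $(\hat A,\hat B)$ (ii) holds because the $\hat A_{11}$ block is already Schur while the anti-stable $\hat A_{11}^{-\top}$ block is stabilized through $-XB_d$ precisely under the assumed stabilizability of $(\hat A_{11}^{-\top},XB_d)$. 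The full-column-rank condition (iv) follows from $\hat A$ having no unit-circle eigenvalue, the vanishing of the bottom rows of $\hat C$, and the full-rank identity block in $\hat D$. With (i)--(vi) in hand, Lemma~\ref{lem:DTSF} delivers a spectral factor $F$ that is square, stable, causal, and invertible with stable causal inverse, and for which $\hat P^\sim\hat P=F^\sim F$; combined with Lemma~\ref{lem:PAdj} this gives $\|\hat e\|_2^2=\langle d,\hat P^\sim\hat P\,d\rangle=\|Fd\|_2^2$ for every $d\in\ell_2$, establishing claims (i)--(iii).

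The remaining and genuinely new step is the order reduction. The key observation is that the cost depends only on the $n_x$-dimensional anti-causal state $v$ and on $d$, never on the causal state $x$: evaluating \eqref{eq:Jwithu0} at $u=u^o$ expresses $J(K^o,d)$---and hence $\hat J=\gamma_J^2 J(K^o,d)+\gamma_d^2\|d\|_2^2$---as a quadratic form in $(v_{t+1},d_t)$ alone. Since $v$ is generated by the $n_x$-dimensional recursion $v_{t+1}=\hat A_{11}^{-\top}v_t-XB_d d_t$ run backward from $v_\infty=0$, I would make this precise by writing $\hat P^\sim\hat P=W^\sim M_{ww}W+W^\sim M_{wd}+M_{wd}^\top W+M_{dd}$, where $W:d\mapsto v_{t+1}$ has McMillan degree at most $n_x$ with all its poles strictly outside the unit disk and the weights $M_{ww},M_{wd},M_{dd}$ are read off from \eqref{eq:Jwithu0}. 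Consequently $\hat P^\sim\hat P$ carries at most $n_x$ poles inside the unit disk, rather than the $2n_x$ one would fear from the state dimension of $\hat P$. Because the spectral factor inherits exactly the poles of $\hat P^\sim\hat P$ that lie inside the disk, $F$ is realizable with state dimension $n_x$, which is the final claim.

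I expect the order-reduction step to be the main obstacle, since it requires turning the algebraic identity ``$\hat J$ depends only on $v$'' into a statement about the McMillan degree of $\hat P^\sim\hat P$ and then about the minimal order of its spectral factor. An alternative, more computational route would bypass the degree count and instead analyze the $2n_x$-dimensional Riccati solutions $\hat X,\hat Y$ of Lemma~\ref{lem:DTSF} directly, showing that the resulting $A_F=\hat A-\hat K_y^\top\hat K_x$ has an $n_x$-dimensional unobservable (or uncontrollable) subspace so that the realization collapses to order $n_x$; I would fall back on this block-structure analysis if the degree argument proved awkward to make fully rigorous.
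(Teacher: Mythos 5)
Your proof is correct in outline, and its first stage (verifying hypotheses (i)--(vi) of Lemma~\ref{lem:DTSF} for $(\hat{A},\hat{B},\hat{C},\hat{D})$, using $\gamma_d>0$ for $\hat{R}\succ 0$, the block-triangular structure of $\hat{A}$ for (v)--(vi), and the assumed stabilizability of $(\hat{A}_{11}^{-\top},XB_d)$ for (ii)) is essentially identical to the paper's. Where you genuinely diverge is the order-reduction step. The paper proceeds constructively: it guesses and verifies that the unique stabilizing solution of the $2n_x$-dimensional DARE \eqref{eq:DAREXhat} has the structured form $\hat{X}=\gamma_J^2\bsmtx X & I\\ I & X^{-1}\esmtx+\bsmtx 0&0\\0&V\esmtx$ with $V$ solving an auxiliary $n_x$-dimensional DARE, deduces that $\hat{K}_x=\bsmtx 0 & *\esmtx$, and concludes that the first $n_x$ states of the factor from Lemma~\ref{lem:DTSF} are unobservable and can be deleted --- i.e.\ exactly your stated fallback. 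Your primary route instead counts poles: since \eqref{eq:Jwithu0} at $u=u^o$ exhibits $\hat{J}$ as a quadratic form in $(v_{t+1},d_t)$ and the map $W:d\mapsto v_{t+1}$ has degree $\le n_x$ with all poles outside the unit disk, $\hat{P}^\sim\hat{P}=W^\sim M_{ww}W+W^\sim M_{wd}+M_{wd}^\top W+M_{dd}$ has at most $n_x$ poles inside the disk, and since $F^\sim$ is analytic and invertible on the open unit disk (its poles and zeros both lie outside, because $F$ and $F^{-1}$ are stable and causal), the local McMillan degree of $F^\sim F$ at each interior point equals that of $F$; hence $\deg F\le n_x$. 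This is a valid and arguably cleaner argument --- notably it bypasses the paper's separate verification that $X\succ 0$, which the paper needs only to write down the structured $\hat{X}$ --- but it buys existence of a low-order realization rather than an explicit one, whereas the paper's route hands you the reduced factor concretely via the DARE \eqref{eq:DAREV} for $V$. The one step you should spell out carefully is the local-degree invariance under multiplication by $F^\sim$ inside the disk; as written it is the only place where your sketch leans on machinery not already in the paper.
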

\begin{proof}
  Consider the dynamics in Equation~\ref{eq:CLK0} and define
  $\hat{Q}:=\hat{C}^\top \hat{C}$, $\hat{S}:= \hat{C}^\top \hat{D}=0$,
  and $\hat{R} = \hat{D}^\top \hat{D}$. Note that $\hat{S}=0$ and
  $\hat{R} = \gamma_d^2 I$. These matrices satisfy the following:

  \vspace{0.1in}
  \noindent (i) $\hat{R} = \gamma_d^2 I \succ 0$ by the assumption
  that $\gamma_d>0$.

  \noindent (ii) $(\hat{A},\hat{B})$ is stabilizable
  because $(\hat{A}_{11}^{-\top},\, -XB_d)$ is stabilizable (by
  assumption) and $\hat{A}_{11}$ is a Schur matrix.

  \noindent (v,vi) $\hat{A}$ is nonsingular and has no eigenvalues on
  the unit circle.  This follows because the eigenvalues of $\hat{A}$
  are the union of the eigenvalues of $\hat{A}_{11}$ and
  $\hat{A}_{11}^{-\top}$. Hence stability and nonsingularity of
  $\hat{A}_{11}$ (Lemma~\ref{lem:DARE}) implies that $\hat{A}$ is
  nonsingular and has no eigenvalues on the unit circle.  

  \noindent (iii) $\hat{A}-\hat{B}\hat{R}^{-1}\hat{S}^\top$ is
  nonsingular because $\hat{S}=0$ and $\hat{A}$ is nonsingular.

  \noindent (iv)
  $\bsmtx \hat{A}-e^{j\theta} I & \hat{B} \\ \hat{C} & \hat{D} \esmtx$
  has full column rank for all $\theta \in [0,2\pi]$. Based on the
  structure of the matrices, this follows because
  $(\hat{A}_{11}^{-\top},\, -XB_d)$ is stabilizable (by assumption)
  and $\hat{A}_{11}$ is a Schur matrix.
  \vspace{0.1in}

  In summary, the matrices satisfy assumptions (i)-(vi) in
  Lemma~\ref{lem:DTSF}. This ensures that $\hat{P}^\sim \hat{P}$ has a
  spectral factor $F$.

  The spectral factor in Lemma~\ref{lem:DTSF} has the same state
  dimension as $\hat{A}$ which is $2n_x$. The remainder of the proof
  shows that a minimal realization for the spectral factor can be
  constructed with dimension $n_x$.  The key point is to verify that
  the unique solution to the DARE \eqref{eq:DAREXhat} in Statement 1
  of Lemma~\ref{lem:DTSF} has the following special structure:  
  \begin{align}
    \label{eq:XhatSpecial}
    \hat{X} = \gamma_J^2 \bmtx X & I \\ I & X^{-1} \emtx 
            + \bmtx 0 & 0 \\ 0 & V  \emtx.
  \end{align}
  This form assumes that $X$ is nonsingular which will be verified
  below. The variable $V$ is defined to be the unique positive
  semidefinite solution to the following DARE:
  \begin{align}
    \label{eq:DAREV}
    0 & = V - \hat{A}_{11}^{-1} V \hat{A}_{11}^{-\top}  - \tilde{Q} \\
    \nonumber
      & + (\hat{A}_{11}^{-1} V X B_d) \,
          \left( \gamma_d^2 I + B_d^\top X V X B_d \right)^{-1}
          \,  (\hat{A}_{11}^{-1} V X B_d)^\top,
  \end{align}
  where:
  {\footnotesize
  \begin{align*}
    \tilde{Q} := \gamma_J^2 \hat{A}_{11}^{-1} \left( X^{-1} 
        -\hat{A}_{11} X^{-1} \hat{A}_{11}^\top  
        -  B_u ( R+B_u^\top X B_u)^{-1} B_u^\top  \right) \hat{A}_{11}^{-\top}.
  \end{align*}
}

It can be shown that $\tilde{Q}$ is positive semidefinite using the
DARE for $X$. Thus Equation~\ref{eq:DAREV} for $V$ corresponds to the
DARE in Lemma~\ref{lem:DARE} with $(A,B_u,C_e,D_{eu},Q,R,S)$ replaced
by $(\hat{A}^{-\top},XB_d$,
$\bsmtx \tilde{Q}^{\frac{1}{2}} \\ 0 \esmtx ,\bsmtx 0 \\ \gamma_d
I\esmtx, \tilde{Q},\gamma_d^2 I,0)$.  It can be verified that the DARE
\eqref{eq:DAREV} satisfies conditions (i)-(iv) in Lemma~\ref{lem:DARE}
and hence there exists a stabilizing stabilizing solution
$V\succeq 0$.  It can also be shown, by direct substitution, that
$\hat{X}$ given in \eqref{eq:XhatSpecial} is positive semidefinite and
satisfies the DARE \eqref{eq:DAREXhat}. Hence this choice for
$\hat{X}$ is the unique stabilizing solution.

The stabilizing gain resulting from the special structure of
$\hat{A}$, $\hat{B}$, and $\hat{X}$ is:
\begin{align*}
  \hat{K}_x:=\hat{H}^{-1}\hat{B}^\top \hat{X} \hat{A}
  = \bmtx 0 & -\hat{H}^{-1} B_d^\top X V  \hat{A}_{11}^{-\top} \emtx.
\end{align*}
As a result, Statement 3 of Lemma~\ref{lem:DTSF} gives a spectral
factorization with the form:
\begin{align*}
  A_F & := \hat{A}-\hat{K}_y^\top\hat{K}_x
        \bmtx \hat{A}_{11} & A_{F,12} \\ 0  & A_{F,22} \emtx, \\
  C_F & := \hat{W}^{-\frac{1}{2}} \hat{K}_x 
        = \bmtx 0 &  C_{F,2} \emtx.
\end{align*}
The first block of states are unobservable in the output and hence
can be removed. Thus the spectral factorization has a minimal
realization with state dimension $n_x$ as claimed.

Finally, we will verify the assumption above that $X$ is
nonsingular. Assume, to the contrary, that $X$ is singular, i.e. there
is a nonzero vector $v$ such that $Xv=0$.  Re-write the DARE
\eqref{eq:DARE} as:
\begin{align}
  \nonumber
  X & = (A-B_uK_x)^\top X (A-B_uK_x) + (Q-S R^{-1}S^\top) \\
  \label{eq:DAREalt}
    & \hspace{0.2in}  + (K_x-R^{-1}S^\top )^\top R (K_x-R^{-1}S^\top ).
\end{align}
We have $R\succ 0$ and
$\bsmtx Q & S \\ S^\top & R \esmtx = \bsmtx C_e^\top \\ D_{eu}^\top
\esmtx \bsmtx C_e & D_{eu} \esmtx \succeq 0$. Thus
$Q-S R^{-1}S^\top \succeq 0$ by the Schur complement lemma.  Multiply
\eqref{eq:DAREalt} on the left and right by $v^\top$ and $v$.  The
left side is $v^\top X v=0$ while all three terms on the right are
$\ge 0$.  Hence each term on the right is zero so that
$X (A-B_u K_x)v=0$.

In summary, $Xv=0$ implies $X (A-B_u K_x)v=0$, i.e.  the null space of
$X$ is $(A-B_uK_x)$-invariant. It follows that there exists an
eigenvector of $(A-B_uK_x)$ in the null space of $X$ (Proposition 4.3
of \cite{demmel97}). In other words, there is a nonzero vector $v$
and scalar $\lambda$ such that $Xv=0$ and $(A-B_uK_x)v=\lambda v$.
The eigenvalue satisfies $1<|\frac{1}{\lambda}|<\infty$ since
$A-B_uK_x$ is stable and nonsingular.  This gives:
\begin{align*}
  v^\top \bmtx (A-B_uK_x)^{-\top} - \frac{1}{\lambda} I,     
  & -X B_d \emtx = 0.
\end{align*}
Thus $X$ singular implies $( (A-B_uK_x)^{-\top}, -X B_d)$ is not
stabilizable by the PBH test (Theorem 3.2 in
\cite{zhou96}).\footnote{Theorem 3.2 in \cite{zhou96} gives the PBH
  test for stabilizability of a continuous-time LTI system.  The
  discrete-time PBH test is analogous.}  By contradiction, the
assumption that $( (A-B_uK_x)^{-\top}, X )$ is stabilizable implies
that $X\succ 0$.
\end{proof}

\subsection{Robust Performance}
\label{app:RP}

This section summarizes existing results on robust performance.
These are special cases of results for the structured
singular value, $\mu$.  Details, including more general results,
can be found in \cite{packard93} or Chapter 11 of \cite{zhou96}.
All systems in this appendix are assumed to be causal.

Consider the uncertain interconnection shown in
Figure~\ref{fig:FUMDelta}. The interconnection, denoted
$F_U(M,\Delta)$, consists of an uncertainty $\Delta$ around the upper
channels of a $M$.  We will initially consider the case where
$w \in \C^{n_w}$, $d \in \C^{n_d}$, $v \in \C^{n_v}$, and
$e \in \C^{n_e}$ are complex vectors.  Moreover,
$M\in \C^{(n_v+n_e)\times (n_w+n_d)}$ and the uncertainty
$\Delta \in \C^{n_w \times n_v}$ are complex matrices. The
interconnection in Figure~\ref{fig:FUMDelta} corresponds to the
following algebraic equations:
\begin{align}
  \label{eq:MDeltaMatrix}
  \begin{split}
  \bmtx v \\ e \emtx
   & = \bmtx M_{11} & M_{12} \\ M_{21} & M_{22} \emtx
  \bmtx w \\ d \emtx \\
  w & = \Delta v,
  \end{split}
\end{align}
where the matrix $M$ has been block partitioned conformably with the
input/output signals.  

\begin{figure}[h]
\centering
\scalebox{0.9}{
\begin{picture}(140,90)(23,65)
 \thicklines
 \put(75,65){\framebox(40,40){$M$}}
 \put(160,72){$d$}
 \put(155,75){\vector(-1,0){40}}  
 \put(25,72){$e$}
 \put(75,75){\vector(-1,0){40}}  
 \put(80,115){\framebox(30,30){$\Delta$}}
 \put(43,95){$v$}
 \put(55,95){\line(1,0){20}}  
 \put(55,95){\line(0,1){35}}  
 \put(55,130){\vector(1,0){25}}  
 \put(141,95){$w$}
 \put(135,130){\line(-1,0){25}}  
 \put(135,130){\line(0,-1){35}}  
 \put(135,95){\vector(-1,0){20}}  
\end{picture}
} 
\caption{Uncertain interconnection $F_U(M,\Delta)$.}
\label{fig:FUMDelta}
\end{figure}
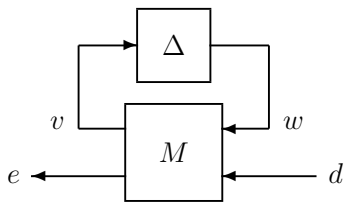

The matrix interconnection $F_U(M,\Delta)$ is said to be well-posed if
$I_{n_v}-M_{11}\Delta$ is nonsingular.  If the interconnection is
well posed then for each $d\in \C^{n_d}$ there exist unique $(v,e,w)$
that satisfy \eqref{eq:MDeltaMatrix}.  Moreover, the output $e$ is
given by:
\begin{align*}
  e  = [ 
\underbrace{M_{22} + M_{21} \Delta (I_{n_v}-M_{11} \Delta)^{-1} M_{12}}_{:=F_U(M,\Delta)}
           ] \, d . 
\end{align*}
The first main result concerns the well-posedness and gain of
$F_U(M,\Delta)$ when the uncertainty satisfies
$\bar{\sigma}(\Delta) \le 1$.  The proof uses the equivalence of
$\|\cdot \|_{2\to 2}$ and $\bar{\sigma}( \cdot )$ for matrices.

\begin{lemma}
  \label{lem:MatrixSSV}
  Let $M\in \C^{(n_v+n_e)\times (n_w+n_d)}$ be given.  The following
  are equivalent:
  \begin{enumerate}
    \renewcommand{\theenumi}{\Alph{enumi}}
  \item There exists $D\in \R$ with $D>0$ such that:
    \begin{align}
      \bar{\sigma} \left( \bsmtx D \cdot I_{n_v} & 0 \\ 0 & I_{n_e} \esmtx
        M \bsmtx D^{-1} \cdot I_{n_w} & 0 \\ 0 & I_{n_d} \esmtx
      \right) < 1.
    \end{align}
  \item If $\Delta \in \C^{n_w \times n_v}$ and
    $\Delta_P \in \C^{n_d\times n_e}$ satisfy
    \begin{align}
      \label{eq:detMDelta}
      \det\left( I-M \bmtx \Delta & 0 \\ 0 & \Delta_P \emtx \right) = 0.
    \end{align}                                         
    then
    $\max\left( \bar{\sigma}(\Delta), \bar{\sigma}(\Delta_P) \right) >1$.
  \item $F_U(M,\Delta)$ is well-posed and
    $\bar{\sigma}(F_U(M,\Delta)) < 1$ for all $\Delta \in \C^{n_w \times n_v}$ 
    with  $\bar{\sigma}(\Delta)\le 1$. 
\end{enumerate}
\end{lemma}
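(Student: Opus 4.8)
The plan is to prove the cycle $A \Rightarrow C \Rightarrow B \Rightarrow A$. A preliminary observation streamlines everything: because $D$ is a \emph{scalar}, the scaling in condition $A$ leaves both the linear-fractional map $F_U(M,\Delta)$ and the determinant in \eqref{eq:detMDelta} unchanged. Writing $\tilde M := \bsmtx D I_{n_v} & 0 \\ 0 & I_{n_e}\esmtx M \bsmtx D^{-1} I_{n_w} & 0 \\ 0 & I_{n_d}\esmtx$, the scalar $D$ commutes through the full block $\Delta$, so $\tilde M_{11}=M_{11}$, $\tilde M_{22}=M_{22}$, and the factors $D^{\pm1}$ on $\tilde M_{12},\tilde M_{21}$ cancel inside the LFT; using $\det(I-XY)=\det(I-YX)$ together with $D^{-1}\Delta D=\Delta$ shows the determinant in \eqref{eq:detMDelta} is identical for $M$ and $\tilde M$, and $\bar\sigma$ of each block is unchanged. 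Hence whenever $A$ holds I may replace $M$ by a $\tilde M$ with $\bar\sigma(\tilde M)<1$ and argue about $\tilde M$.

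For $A \Rightarrow C$: assuming $\bar\sigma(\tilde M)<1$, take any $\Delta$ with $\bar\sigma(\Delta)\le 1$. Since $\bar\sigma(\tilde M_{11})\le\bar\sigma(\tilde M)<1$, the small-gain theorem makes $I-\tilde M_{11}\Delta$ nonsingular, so $F_U(M,\Delta)=F_U(\tilde M,\Delta)$ is well-posed. To bound its gain I would append a fictitious full block $\Delta_P$ with $\bar\sigma(\Delta_P)\le 1$; then $\mathrm{diag}(\Delta,\Delta_P)$ is a contraction and $\bar\sigma(\tilde M)<1$ forces $I-\tilde M\,\mathrm{diag}(\Delta,\Delta_P)$ nonsingular for every such $\Delta_P$. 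Factoring this determinant (as in the next step) shows $\det(I-F_U(M,\Delta)\Delta_P)\ne0$ for all $\bar\sigma(\Delta_P)\le1$; were $\bar\sigma(F_U(M,\Delta))\ge1$, a rank-one $\Delta_P$ aligned with the top singular vectors and scaled to unit norm would create an eigenvalue at $1$, a contradiction. Hence $\bar\sigma(F_U(M,\Delta))<1$.

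For $C \Rightarrow B$: suppose $\Delta,\Delta_P$ satisfy \eqref{eq:detMDelta}. If $\bar\sigma(\Delta)>1$ we are done, so assume $\bar\sigma(\Delta)\le 1$; then $C$ makes $I-M_{11}\Delta$ nonsingular and the Schur-complement factorization
\begin{align*}
\det\!\left(I-M\,\mathrm{diag}(\Delta,\Delta_P)\right)
=\det(I-M_{11}\Delta)\,\det\!\left(I-F_U(M,\Delta)\,\Delta_P\right)
\end{align*}
forces $\det(I-F_U(M,\Delta)\Delta_P)=0$. Thus $F_U(M,\Delta)\Delta_P$ has an eigenvalue at $1$, giving $\bar\sigma(F_U(M,\Delta))\,\bar\sigma(\Delta_P)\ge 1$; since $\bar\sigma(F_U(M,\Delta))<1$ by $C$, we get $\bar\sigma(\Delta_P)>1$, so $\max(\bar\sigma(\Delta),\bar\sigma(\Delta_P))>1$.

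The main obstacle is $B \Rightarrow A$. Condition $B$ is precisely $\mu_{\mathbf\Delta}(M)<1$ for the two-full-block structure $\mathbf\Delta=\{\mathrm{diag}(\Delta,\Delta_P)\}$, while $A$ is its diagonal $D$-scaling upper bound; the direct implication $A\Rightarrow B$ is the easy ``upper bound'' direction, so the reverse is the \emph{tightness} of that bound. I would argue by contrapositive: if $A$ fails, i.e. $\inf_{D>0}\bar\sigma(\tilde M)\ge1$, I must produce a unit-norm pair $(\Delta,\Delta_P)$ satisfying \eqref{eq:detMDelta}, contradicting $B$. This is exactly where the classical $\mu$ theory enters: for block structures with at most three complex blocks (here two full blocks, so $2S+F=2\le3$), $\mu$ equals its $D$-scaled upper bound (Packard--Doyle; Chapter 11 of \cite{zhou96}). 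The genuine content — and the step I expect to require the most care — is the worst-case construction at the optimal scaling that equalizes the two blocks, where one builds $(\Delta,\Delta_P)$ from the singular vectors of the critically scaled $\tilde M$; I would invoke the cited result for this rather than reprove it.
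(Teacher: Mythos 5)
Your proposal is correct and uses essentially the same ingredients as the paper's proof: the scalar-$D$ commutation through the full blocks plus the Sylvester determinant identity, sub-multiplicativity of the induced 2-norm, the Schur-complement factorization $\det\left(I-M\,\mathrm{diag}(\Delta,\Delta_P)\right)=\det(I-M_{11}\Delta)\det\left(I-F_U(M,\Delta)\Delta_P\right)$ with a rank-one worst-case $\Delta_P$, and deferral of the hard converse to the two-full-block tightness of the $D$-scaled upper bound --- exactly as the paper, which likewise only sketches the elementary chain A)$\to$B)$\to$C) and cites Theorem 9.1 and the main-loop theorem of \cite{packard93} for the reverse directions. The one cosmetic slip is that in your A)$\Rightarrow$C) argument the offending $\Delta_P$ should be scaled to norm $\bar{\sigma}(F_U(M,\Delta))^{-1}\le 1$ rather than to unit norm, so that $F_U(M,\Delta)\Delta_P$ acquires an eigenvalue at exactly $1$.
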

\begin{proof}
  Equivalence of A) and B) follows from the fact that the structured
  singular value ($\mu$) is equal to its upper bound for the case of
  two full, complex blocks. This was originally shown in
  \cite{redheffer59} but a more recent reference is Theorem 9.1 of
  \cite{packard93}.  Equivalence of B) and C) is a special case of the
  main-loop theorem (Theorem 4.3 of \cite{packard93}). 

  We'll sketch the proof of sufficiency, i.e. A)$\to $ B) $\to$
  C). This direction is used to verify the bound on robust regret and
  can be shown with only linear algebra facts.

\vspace{0.1in}
\noindent
\emph{A)$\to$ B):} Assume A) is true and that $\Delta$, $\Delta_P$
satisfy \eqref{eq:detMDelta}. We will show that
$\max\left( \bar{\sigma}(\Delta), \bar{\sigma}(\Delta_P) \right) >1$,
i.e. B) holds.

Note that $\Delta = D^{-1} \Delta D$
because $D>0$ is a scalar. Use this relation and the Sylvester
determinant identity to re-write \eqref{eq:detMDelta} as:
\begin{align*}
  \det \Bigl( I-
\underbrace{\bsmtx D \cdot I_{n_v} & 0 \\ 0 & I_{n_e} \esmtx
M \bsmtx D^{-1} \cdot I_{n_w} & 0 \\ 0 & I_{n_d} \esmtx}_{:=\hat{M}}
\underbrace{\bsmtx \Delta & 0 \\ 0 & \Delta_P \esmtx}_{:=\hat{\Delta}} 
\, \Bigr) = 0.
\end{align*}  
Thus $I-\hat{M}\hat{\Delta}$ is singular so there is a nonzero
vector $v$ such that $\hat{M}\hat{\Delta}v = v$.  Hence
$\|\hat{M}\hat{\Delta}\|_{2\to 2} \ge
\frac{\|\hat{M}\hat{\Delta}v\|_2}{\|v\|_2} = 1$.  Further, the induced
2-norm is sub-multiplicative:
\begin{align*}
\|\hat{M}\hat{\Delta}\|_{2\to 2} \le \|\hat{M}\|_{2\to 2} \cdot
\|\hat{\Delta}\|_{2\to 2}. 
\end{align*}
Thus $\|\hat{M}\hat{\Delta}\|_{2\to 2} \ge 1$ implies
$\|\hat{\Delta}\|_{2\to 2} \ge \|\hat{M}\|_{2\to 2}^{-1}$. Finally,
combine this with $\|\hat{M}\|_{2\to 2}<1$, by assumption A), to
conclude that
$\bar{\sigma}(\hat{\Delta})=\|\hat{\Delta}\|_{2\to 2}>1$.  Condition
B) follows because
$\max\left( \bar{\sigma}(\Delta), \bar{\sigma}(\Delta_P) \right)
=\bar{\sigma}( \hat\Delta )>1$.



\vspace{0.1in}
\noindent
\emph{B)$\to$ C):} Assume B) is true and that $\Delta$ satisfies
$\bar{\sigma}(\Delta) \le 1$. We will show that $F_U(M,\Delta)$ is
well-posed and $\bar{\sigma}(F_U(M,\Delta))<1$, i.e. C) holds.

Set $\Delta_P=0$ so that
$\max\left( \bar{\sigma}(\Delta), \bar{\sigma}(\Delta_P) \right) \le
1$.  It follows from B) that:
\begin{align*}
  0  \ne \det\left( I-M \bmtx \Delta & 0 \\ 0 & 0 \emtx \right)
     = \det(I_{n_v}- M_{11} \Delta ).
\end{align*}                                         
Hence $F_U(M,\Delta)$ is well-posed. 

Next, we will show $\gamma:=\bar{\sigma}(F_U(M,\Delta))<1$. This holds
trivially if $\gamma=0$ so we'll assume that $\gamma>0$. There exist
vectors $d\in\C^{n_d}$ and $e\in \C^{n_e}$ such that
$e = F_U(M,\Delta) d$, $\|d\|_2 =1$, and $\|e\|_2 = \gamma$.  Define
$\Delta_P:= \gamma^{-2} d e^*$ so that $\Delta_P e = d$ and
$\bar{\sigma}(\Delta_P) = \gamma^{-1}$. By construction,
$F_U(M,\Delta) \Delta_P \, e = e$ and hence $I-F_U(M,\Delta) \Delta_P$
is singular.

We have $\det(I-M_{11}\Delta)\ne 0$ by well-posedness.  Hence singularity
of $I-F_U(M,\Delta) \Delta_P$ and the definition of
$F_U(M,\Delta)$ imply: {\footnotesize
  \begin{align*}
    & 0  = \det(I-M_{11}\Delta) \cdot
    \det( I - F_U(M,\Delta) \Delta_P )
    \\
    & = \det(I-M_{11}\Delta) \cdot
    \det( I - M_{22}\Delta_P - M_{21}\Delta (I-M_{11}\Delta)^{-1} M_{12}\Delta_P). 
  \end{align*}
}This can be rewritten, by the block matrix determinant formula, as:
\begin{align*}
  0 = \det \bsmtx I-M_{11}\Delta & -M_{12} \Delta_P \\ -M_{21}\Delta &
       I - M_{22}\Delta_P \esmtx
    = \det \left( I - M \bsmtx \Delta & 0 \\ 0 & \Delta_P \esmtx \right).
\end{align*}
Finally, assumption B) implies that
$\max\left( \bar{\sigma}(\Delta), \bar{\sigma}(\Delta_P) \right)>1$.
Moreover,  $\bar{\sigma}(\Delta) \le 1$, and
$\bar{\sigma}(\Delta_P)=\gamma^{-1}$ give:
\begin{align*}
\gamma^{-1} =  \max\left( \bar{\sigma}(\Delta), \bar{\sigma}(\Delta_P) \right)>1.
\end{align*}
Hence $\bar{\sigma}(F_U(M,\Delta))=\gamma<1$.
\end{proof}


%

Next, consider the uncertain interconnection in
Figure~\ref{fig:FUMDelta} where $M$ and $\Delta$ are discrete-time
systems. Assume $M$ has the following state-space representation:
\begin{align}
  \label{eq:MSS}
   \bmtx x_{t+1}\\ v_t \\ e_t\emtx =
   \bmtx A & B_w & B_d\\
   C_v & D_{vw} & D_{vd} \\
   C_e & D_{ew} & D_{ed}  \\
   \emtx \bmtx x_t \\ w_t \\d_t \emtx.
\end{align}
Moreover, assume $\Delta$ has the state-space representation:
\begin{align}
  \label{eq:DeltaSS}
   \bmtx x^\Delta_{t+1}\\ w_t  \emtx =
   \bmtx A_\Delta & B_\Delta \\
      C_\Delta & D_\Delta \emtx
     \bmtx x^\Delta_t \\ v_t \emtx.
\end{align}
Combine the $v_t$ and $w_t$ output equations of \eqref{eq:MSS} and
\eqref{eq:DeltaSS}:
\begin{align}
\label{eq:vwWellPosed}
  \bmtx I_{n_v} & -D_{vw} \\ -D_\Delta & I_{n_w} \emtx \, 
  \bmtx v_t \\ w_t \emtx  =
  \bmtx C_v & 0 \\ 0 & C_\Delta \emtx \, \bmtx x_t \\ x^\Delta_t \emtx
  +  \bmtx D_{vd} \\ 0  \emtx \, d_t.
\end{align}
We say that (system) interconnection $F_U(M,\Delta)$ is well-posed if
$\bsmtx I_{n_v} & -D_{vw} \\ -D_\Delta & I_{n_w} \esmtx$ is
nonsingular.  This condition is equivalent to nonsingularity of
$I_{n_v} - D_{vw} D_\Delta$. Hence if the system is well-posed then
\eqref{eq:vwWellPosed} has a unique solution $(v_t,w_t)$ for any
$(x_t,x^\Delta_t,d_t)$.  In addition, $F_U(M,\Delta)$ has a well-defined
state-space representation for the dynamics from $d$ to $e$:
\begin{align*}
  \bmtx x_{t+1} \\ x^\Delta_{t+1} \emtx  & = \tilde{A} \, 
    \bmtx x_t \\ x^\Delta_t \emtx  + \tilde{B} \, d_t \\
  e_t & = \tilde{C} \bmtx x_{t} \\ x^\Delta_{t} \emtx + \tilde{D} \, d_t,
\end{align*}
where:
\begin{align}
  \label{eq:MDeltaAMatrix}
  \tilde{A}:= \bsmtx A & 0 \\ 0 & A_\Delta \esmtx 
+ \bsmtx 0 & B_w \\ B_\Delta & 0 \esmtx \,                                
\bsmtx I_{n_v} & -D_{vw} \\ -D_\Delta & I_{n_w} \esmtx^{-1}
\, \bsmtx C_v & 0 \\ 0 & C_\Delta \esmtx, 
\end{align}
and $(\hat{B},\hat{C},\hat{D})$ can be defined similarly.

If the interconnection is well posed then for each $d\in \ell_2$ there
exist unique signals $(v,e,w,x,x^\Delta)$ that satisfy the
$(M,\Delta)$ dynamics from $x_{-\infty}=0$ and $x^\Delta_{-\infty}=0$.
The small gain theorem, stated next, provides a necessary and
sufficient condition for well-posedness and stability of the system
$F_U(M,\Delta)$ when the uncertainty satisfies
$\| \Delta \|_\infty \le 1$. The condition is stated in terms of
$(1,1)$ block of $M$, denoted $M_{11}$. This corresponds to the system
with input $w$ to output $v$ defined by state matrices
$(A,B_w,C_v,D_{vw})$.
\begin{thm}
  \label{thm:smallgain}
  Let $M$ be a given $(n_v+n_e)\times(n_w+n_d)$ LTI system and assume
  $M$ is stable.  The following are equivalent:
  \begin{enumerate}
    \renewcommand{\theenumi}{\Alph{enumi}}
  \item  $\|M_{11}\|_\infty <1$. 
  \item $F_U(M,\Delta)$ is well-posed and stable for all
    $n_w\times n_v$ stable, LTI systems $\Delta$ with
    $\|\Delta\|_\infty \le 1$.
  \end{enumerate}
\end{thm}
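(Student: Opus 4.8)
The plan is to prove the two implications separately, lifting the constant-matrix argument of Lemma~\ref{lem:MatrixSSV} to the level of systems.

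For the direction (A)$\Rightarrow$(B), I would first dispatch well-posedness. Since $M_{11}$ and $\Delta$ are stable discrete-time systems, they are analytic for $|z|\ge 1$ and at $\infty$, so applying the maximum modulus principle to $g(\zeta):=M_{11}(1/\zeta)$ on $|\zeta|\le 1$ gives $\bar{\sigma}(D_{vw})=\bar{\sigma}(M_{11}(\infty))\le \|M_{11}\|_\infty<1$, and likewise $\bar{\sigma}(D_\Delta)\le \|\Delta\|_\infty\le 1$. Then $\bar{\sigma}(D_{vw}D_\Delta)<1$, so $I_{n_v}-D_{vw}D_\Delta$ is nonsingular and the interconnection is well-posed. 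For stability I would use a homotopy (continuity-of-poles) argument: replace $\Delta$ by $\tau\Delta$ for $\tau\in[0,1]$. At $\tau=0$ the closed loop equals $M$, which is stable, and the closed-loop poles depend continuously on $\tau$. A pole can reach the unit circle at some $\tau^\ast$ only if $\det\!\left(I-M_{11}(e^{j\theta})\,\tau^\ast\Delta(e^{j\theta})\right)=0$ for some $\theta$. But the frequency-wise, submultiplicative bound $\bar{\sigma}\!\left(M_{11}(e^{j\theta})\,\tau^\ast\Delta(e^{j\theta})\right)\le \|M_{11}\|_\infty\cdot\tau^\ast<1$ forbids this, so no pole crosses and $F_U(M,\tau\Delta)$ stays stable up to $\tau=1$. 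Equivalently, this can be phrased as a generalized Nyquist / winding-number argument showing $\det(I-M_{11}\Delta)$ encircles the origin zero times.

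For the converse (B)$\Rightarrow$(A) I would argue by contraposition: assuming $\|M_{11}\|_\infty\ge 1$, I would construct a single stable LTI $\Delta$ with $\|\Delta\|_\infty\le 1$ for which $F_U(M,\Delta)$ is not well-posed-and-stable. Pick a frequency $\theta_0$ with largest singular value $\sigma_0:=\bar{\sigma}(M_{11}(e^{j\theta_0}))\ge 1$, and let $u,y$ be unit right/left singular vectors, so $y^\ast M_{11}(e^{j\theta_0})u=\sigma_0$. The rank-one matrix $\sigma_0^{-1}\,u y^\ast$ then makes $I-M_{11}(e^{j\theta_0})\Delta(e^{j\theta_0})$ singular, i.e. places a closed-loop pole at $e^{j\theta_0}$ on the unit circle. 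The real work is to realize this complex matrix value at $\theta_0$ by a real, proper, stable transfer matrix of $H_\infty$ norm $\le 1$; this is done by the standard all-pass (Blaschke) interpolation, selecting first-order factors to match the phases of the entries of $u$ and $y$ at $\theta_0$ and pairing $\theta_0$ with $-\theta_0$ to preserve realness.

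I expect this converse construction to be the main obstacle. The forward direction reduces to a continuity argument plus the pointwise bound already exploited in Lemma~\ref{lem:MatrixSSV}, whereas the converse must convert a pointwise, complex, rank-one destabilizing perturbation into a genuine real-rational stable system of bounded norm. The delicate points are matching both magnitude and phase of complex singular-vector entries with unit-norm all-pass factors, preserving realness via conjugate-symmetric pairing of $\pm\theta_0$, and the self-conjugate boundary cases $\theta_0\in\{0,\pi\}$. Since this is precisely the tightness argument for the structured singular value with a single full block, in practice I would cite Theorem~9.1 and the main-loop theorem of \cite{packard93} (as already invoked for Lemma~\ref{lem:MatrixSSV}) rather than reproduce the interpolation in full.
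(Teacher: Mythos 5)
Your proposal is correct, and the converse direction (B)$\Rightarrow$(A) is essentially identical to the paper's: contraposition, a rank-one destabilizing matrix $\sigma_0^{-1}u y^*$ at a peak frequency $\theta_0$, all-pass interpolation to a real-rational stable $\Delta$ with the real-matrix special case at $\theta_0\in\{0,\pi,2\pi\}$, and an appeal to Theorem 9.1 of the standard references rather than reproducing the Blaschke construction — exactly what the paper does. The forward direction differs in route. The paper first establishes the determinant identity $\det(I_{n_v}-M_{11}(z)\Delta(z))=\det(zI-\tilde{A})$ and then shows, via the maximum modulus principle and submultiplicativity, that this determinant is nonzero on the \emph{entire} region $\{z:|z|\ge 1\}$, which simultaneously delivers well-posedness (at $z=\infty$) and stability in one stroke. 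You instead handle well-posedness separately at $z=\infty$ and then run a homotopy $\tau\mapsto\tau\Delta$, arguing that no closed-loop eigenvalue can cross the unit circle because the same frequency-domain bound forbids $\det(I-M_{11}(e^{j\theta})\tau\Delta(e^{j\theta}))=0$. This is a valid generalized-Nyquist-style argument; it only needs the determinant condition on the unit circle but pays for that with a continuity-of-eigenvalues argument (which requires noting that $\tilde{A}(\tau)$ is well-defined and continuous for all $\tau\in[0,1]$, and that $\tilde{A}(0)$ is block-diagonal with Schur blocks $A$ and $A_\Delta$). Note that your homotopy step still silently invokes the same determinant identity the paper proves explicitly — a unit-circle eigenvalue of $\tilde{A}(\tau^*)$ must be linked back to singularity of $I-M_{11}(e^{j\theta})\tau^*\Delta(e^{j\theta})$, and one should rule out pole-zero cancellation there — so if you adopt this route you should state and verify that identity (or the Schur-complement computation behind it) rather than treat it as self-evident. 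Net effect: the paper's argument is slightly more economical; yours is equally rigorous once the identity is made explicit.
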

\begin{proof}
  The sufficiency of the small gain condition under more general
  settings is due to Zames \cite{khalil5,zames66}. Condition A) is
  actually necessary and sufficient for LTI systems and uncertainties.
  Theorem 9.1 in \cite{zhou96} provides a proof for the
  continuous-time version of this result.  We sketch a proof for
  discrete-time systems for completeness.  

  A key step is to connect the eigenvalues of $\tilde{A}$
  to the transfer function $M_{11}(z)\Delta(z)$.  Use the
  block-matrix determinant formula and definition of the transfer
  functions  to show:
  \begin{align*}
    &\det(I_{n_v} - M_{11}(z) \Delta(z)) \\
    & = \det\left( \bmtx I_{n_v} & 0 \\ 0 & I_{n_w} \emtx
          - \bmtx 0 & M_{11}(z) \\ \Delta(z) & 0 \emtx \right) \\
 & = \det\left( \bsmtx I_{n_v} & -D_{vw} \\ -D_\Delta  & I_{n_w} \esmtx
     - \bsmtx C_v & 0 \\ 0 & C_\Delta \esmtx
      \bsmtx zI-A & 0 \\ 0 & zI-A_\Delta\esmtx^{-1}
      \bsmtx 0 & B_w \\ B_\Delta & 0 \esmtx
                              \right). 
  \end{align*}
  Apply the block-matrix determinant formula again to the last line to obtain:
  \begin{align}
    \label{eq:SGdetcond}
    \det(I_{n_v} - M_{11}(z) \Delta(z))  = 
      \det( zI-\tilde{A}).
  \end{align}
  This equality holds when: (i) $F_U(M,\Delta)$ is well-posed and (ii) $z$ is
  not a pole of $M_{11}$ or $\Delta$.  These conditions ensure that
  the various determinants are well-defined.  We can now complete
  the proof.

\vspace{0.1in}
\noindent
\emph{A)$\to$ B):} Assume A) holds and $\Delta$ is any stable system
with $\|\Delta\|_\infty \le 1$. We will show that B) holds:
$F_U(M,\Delta)$ is well-posed and stable.

Define the complement of the open unit disk as
$\Dc:=\{ z \, : \, |z|\ge 1 \}$.  Both $M_{11}$ and $\Delta$ are
stable systems so the maximum modulus principle holds \cite{boyd85}:
\begin{align*}
  \sup_{z \in \Dc} \bar{\sigma}\left( M_{11}(z) \Delta(z) \right) 
  = \sup_{\theta \in [0,2\pi]}
  \bar{\sigma}\left( M_{11}(e^{j\theta}) \Delta(e^{j\theta}) \right).
\end{align*}
The right-side is equal to $\|M_{11} \Delta\|_\infty$. This can be bounded
using the small gain assumptions:
\begin{align}
  \| M_{11} \Delta\|_\infty 
    \le   \| M_{11} \|_\infty \cdot   \| \Delta\|_\infty < 1.
\end{align}
Hence $\bar{\sigma}\left( M_{11}(z) \Delta(z) \right) <1$ for all
$z\in \Dc$. This further implies that
that $\|M_{11}(z)\Delta(z)v\|_2 < \|v\|_2$ for any nonzero
vector $v$.  Hence $(I-M_{11}(z)\Delta(z))v \ne 0$ for any
$v\ne 0$. Therefore $\det(I_{n_v} - M_{11}(z) \Delta(z))\ne 0$ for all
$z\in \Dc$.

One consequence is that $I_{n_v}-D_{vw} D_\Delta$ is nonsingular
because $M_{11}(\infty)=D_{vw}$  and $\Delta(\infty) = D_\Delta$.
Thus $F_U(M,\Delta)$ is well-posed. Moreover, any $z\in \Dc$ is not a pole
of $M_{11}$ and $\Delta$ because both are stable. Hence we
can apply \eqref{eq:SGdetcond}:
\begin{align*}
 0 \ne \det(I_{n_v} - M_{11}(z) \Delta(z))  = 
      \det( zI-\tilde{A})
\,\,\, \forall z\in \Dc.
\end{align*}
Thus $F_U(M\Delta)$ is stable.

\vspace{0.1in}
\noindent
\emph{B)$\to$ A):} Assume A) fails to hold and we will show that B)
fails to hold.

If A) fails to hold then there is a frequency $\theta_0\in[0,2\pi]$
such that $\bar{\sigma}(M(e^{j\theta_0}))\ge 1$. To shorten notation,
define $M_0:=M(e^{j\theta_0})$. Let $u_0\in \C^{n_v}$,
$v_0\in \C^{n_w}$ be the singular vectors corresponding to
$\bar{\sigma}(M_0)$: $\|u_0\|_2=\|v_0\|_2=1$ and
$\bar{\sigma}(M_0) u_0 =M_0 v_0$. Define
$\Delta_0 := \frac{1}{\bar{\sigma}(M_0)} v_0 u_0^*$ so that
$\bar{\sigma}(\Delta_0) = \frac{1}{\bar{\sigma}(M_0)} \le 1$.  
Moreover, $(I_{n_v}-M_0 \Delta_0) u_0 = 0$ which implies
$\det(I_{n_v}-M_0\Delta_0)=0$.

There are two cases.  First, if $\theta_0=0$, $\pi$, or $2\pi$ then
$M_0$ and $\Delta_0$ are real matrices and we can define constant
system $\Delta:=\Delta_0$.  Second, if
$\theta_0 \in (0,\pi) \cup (\pi,2\pi)$ then $M_0$ and $\Delta_0$ are
complex matrices. We can construct a stable, LTI system $\Delta$ such
that $\Delta(e^{j\theta_0})=\Delta_0$ and
$\|\Delta\|_\infty = \bar{\sigma}(\Delta_0)$.  Theorem 9.1 of
\cite{zhou96} gives the construction for continuous-time and a similar
construction works in discrete-time.  

In either case, the construction gives a stable, LTI system $\Delta$
with $\|\Delta\|_\infty =\bar{\sigma}(\Delta_0)\le 1$ and
$\det(I_{n_v} - M(e^{j\theta_0}) \Delta(e^{j\theta_0}))=0$.  It
follows from \eqref{eq:SGdetcond} that $\tilde{A}$ has a pole at
$z=e^{j\theta_0}$ and hence $F_U(M,\Delta)$ is unstable.
\end{proof}

Finally, we arrive at the main result of this appendix: a necessary
and sufficient condition for the well-posedness, stability and
gain-boundedness of the system $F_U(M,\Delta)$ when the uncertainty
satisfies $\| \Delta \|_\infty \le 1$.
\begin{lemma}
  \label{lem:SystemSSV}
  Let $M$ be a given $(n_v+n_e)\times(n_w+n_d)$ LTI system and assume
  $M$ is stable. The following are equivalent:
  \begin{enumerate}
    \renewcommand{\theenumi}{\Alph{enumi}}
  \item There exists $D:[0,2\pi]\to (0,\infty)$ such that 
    \begin{align}
      \nonumber
     & \bar{\sigma}\left( \bsmtx D(\theta) \cdot I_{n_v} & 0 \\ 0 & I_{n_e} \esmtx
    M(e^{j\theta}) \bsmtx D(\theta)^{-1} \cdot I_{n_w} & 0 \\ 0 & I_{n_d} \esmtx
   \right) < 1 \\ 
      \label{eq:DMDi}
     & \hspace{2in} \forall \theta \in [0,2\pi].
    \end{align}
  \item $F_U(M,\Delta)$ is well-posed, stable, and
    $\|F_U(M,\Delta)\|_\infty < 1$ for all $n_w\times n_v$ stable, LTI
    systems $\Delta$ with $\|\Delta\|_\infty \le 1$.
\end{enumerate}
\end{lemma}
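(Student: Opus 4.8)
The plan is to reduce this system-level equivalence to the frequency-wise matrix equivalence of Lemma~\ref{lem:MatrixSSV} together with the small gain theorem (Theorem~\ref{thm:smallgain}). Two standard facts make the reduction work: for a stable LTI system $G$ the $H_\infty$ norm equals the peak of its frequency response, $\|G\|_\infty = \max_{\theta\in[0,2\pi]} \bar{\sigma}(G(e^{j\theta}))$, and the frequency response of a linear-fractional interconnection factors through the frequency responses of its parts, $F_U(M,\Delta)(e^{j\theta}) = F_U(M(e^{j\theta}),\Delta(e^{j\theta}))$. With these, condition A read at a fixed $\theta$ is precisely condition A of Lemma~\ref{lem:MatrixSSV} for the complex matrix $M(e^{j\theta})$ with the scalar $D(\theta)$, while $\Delta(e^{j\theta})$ is a constant complex matrix with $\bar{\sigma}(\Delta(e^{j\theta})) \le \|\Delta\|_\infty \le 1$. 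I would prove the two implications separately.

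\emph{A$\Rightarrow$B:} Because $D(\theta)$ is a scalar it commutes through $M_{11}$, so the $(1,1)$ block of the scaled matrix in \eqref{eq:DMDi} is exactly $M_{11}(e^{j\theta})$; since a submatrix has no larger maximum singular value, condition A forces $\bar{\sigma}(M_{11}(e^{j\theta})) < 1$ for every $\theta$, i.e. $\|M_{11}\|_\infty < 1$. Theorem~\ref{thm:smallgain} then yields well-posedness and stability of $F_U(M,\Delta)$ for all stable LTI $\Delta$ with $\|\Delta\|_\infty \le 1$. Fixing such a $\Delta$, the implication A$\Rightarrow$C of Lemma~\ref{lem:MatrixSSV} applied at each $\theta$ to $M(e^{j\theta})$ gives $\bar{\sigma}(F_U(M(e^{j\theta}),\Delta(e^{j\theta}))) < 1$, i.e. $\bar{\sigma}(F_U(M,\Delta)(e^{j\theta})) < 1$ pointwise. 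Since $F_U(M,\Delta)$ is stable, the map $\theta \mapsto \bar{\sigma}(F_U(M,\Delta)(e^{j\theta}))$ is continuous on the compact interval $[0,2\pi]$, so its maximum is attained and strictly below one; that maximum is $\|F_U(M,\Delta)\|_\infty$, which establishes B.

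\emph{B$\Rightarrow$A:} I would argue by contraposition, and I expect this localization-plus-interpolation step to be the crux. Condition A holds exactly when condition A of Lemma~\ref{lem:MatrixSSV} holds for $M(e^{j\theta})$ at every $\theta$; by the A$\iff$B equivalence there this is the statement $\mu_{\hat{\Delta}}(M(e^{j\theta})) < 1$ for all $\theta$, where $\hat{\Delta} = \mathrm{diag}(\Delta,\Delta_P)$ is the two-full-block structure obtained by adjoining a fictitious performance block $\Delta_P \in \C^{n_d\times n_e}$. Because $\mu$ is continuous and $M(e^{j\theta})$ is continuous on the compact circle, failure of A localizes to a single frequency: there is $\theta_0$ with $\mu_{\hat{\Delta}}(M(e^{j\theta_0})) \ge 1$, i.e. condition A of Lemma~\ref{lem:MatrixSSV} fails at $\theta_0$. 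The contrapositive of the C$\Rightarrow$A part of that lemma then supplies a constant complex $\Delta_0$ with $\bar{\sigma}(\Delta_0) \le 1$ for which $F_U(M(e^{j\theta_0}),\Delta_0)$ is either ill-posed (i.e. $I - M_{11}(e^{j\theta_0})\Delta_0$ singular) or satisfies $\bar{\sigma}(F_U(M(e^{j\theta_0}),\Delta_0)) \ge 1$. The delicate remaining step is to interpolate $\Delta_0$ by a genuine stable LTI system $\Delta$ with $\Delta(e^{j\theta_0}) = \Delta_0$ and $\|\Delta\|_\infty = \bar{\sigma}(\Delta_0) \le 1$, using the same boundary-interpolation construction invoked in the B$\Rightarrow$A direction of Theorem~\ref{thm:smallgain} (attributed there to Theorem 9.1 of \cite{zhou96}). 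For this $\Delta$, either $\det(I - M_{11}(e^{j\theta_0})\Delta(e^{j\theta_0}))=0$, forcing a pole of $F_U(M,\Delta)$ onto $z=e^{j\theta_0}$ by the determinant identity in the proof of Theorem~\ref{thm:smallgain} so that $F_U(M,\Delta)$ is not stable, or $\|F_U(M,\Delta)\|_\infty \ge \bar{\sigma}(F_U(M,\Delta)(e^{j\theta_0})) \ge 1$. In either case B fails, which completes the contrapositive and hence the equivalence.
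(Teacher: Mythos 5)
Your proposal follows essentially the same route as the paper's proof: reduce to the frequency-wise matrix equivalence of Lemma~\ref{lem:MatrixSSV} at each $\theta$ (using that the scalar $D(\theta)$ commutes through $M_{11}$ so the small gain theorem handles well-posedness and stability), and for the converse localize the failure to a single frequency $\theta_0$, extract an offending complex matrix $\Delta_0$ from the contrapositive of that lemma, and interpolate it by a stable LTI uncertainty. The one detail you gloss over is the case $\theta_0\in\{0,\pi,2\pi\}$: there $M(e^{j\theta_0})$ is real and a real-rational $\Delta$ must have a real frequency response at $z=\pm 1$, so the a priori complex $\Delta_0$ delivered by Lemma~\ref{lem:MatrixSSV} cannot simply be interpolated; the paper patches this by invoking a structured-singular-value result guaranteeing that $\Delta_0$ can be taken real at those frequencies (unlike in Theorem~\ref{thm:smallgain}, where realness comes for free from the SVD of a real matrix). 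Aside from that single point, the argument is correct and matches the paper's.
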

\begin{proof}
  This lemma is related to the frequency domain structured singular
  value ($\mu$) tests in Section 10.2 of \cite{packard93}. Similar
  related results appear in Chapter 11 of \cite{zhou96}.  However, the
  precise statement of this lemma does not appear in either reference.
  Hence we'll sketch the proof here.

\vspace{0.1in}
\noindent
\emph{A)$\to$ B):} Assume A) holds and $\Delta$ is stable with
$\|\Delta\|_\infty \le 1$. We will show that B) holds: $F_U(M,\Delta)$
is well-posed, stable, and $\| F_U(M,\Delta)\|_\infty <1$.

First, the upper left block of the matrix in \eqref{eq:DMDi} is
$D(\theta)M_{11}(e^{j\theta}) D(\theta)^{-1}$. This simplifies to
$M_{11}(e^{j\theta})$ because $D(\theta)$ is a nonzero scalar.
Hence Condition A) implies that $\|M_{11}\|_\infty <1$. Well-posedness
and stability of $F_U(M,\Delta)$ follows from the small-gain theorem
(Theorem~\ref{thm:smallgain}).

Next, note that Condition A) in Lemma~\ref{lem:MatrixSSV} holds for
each $\theta\in[0,2\pi]$ with the complex matrix $M(e^{j\theta})$ and
real scalar $D(\theta)>0$. This implies that Condition C) in
Lemma~\ref{lem:MatrixSSV} holds using the complex matrix
$\Delta(e^{j\theta})$, i.e:
\begin{align}
  \bar{\sigma} \left( F_U(M(e^{j\theta}),\Delta(e^{j\theta}))\right) <1 
  \,\,\, \forall \theta \in [0,2\pi].
\end{align}
Thus $\|F_U(M,\Delta)\|_\infty <1$.

\vspace{0.1in}
\noindent
\emph{B)$\to$ A):} Assume A) fails to hold and we will show that B) fails
to hold.

If A) fails to hold then there is a frequency $\theta_0\in[0,2\pi]$ such that:
\begin{align*}
\inf_{D>0} \,\, \bar{\sigma} \left( 
   \bsmtx D \cdot I_{n_v} & 0 \\ 0 & I_{n_e} \esmtx
  M(e^{j\theta_0}) \bsmtx D^{-1} \cdot I_{n_w} & 0 \\ 0 & I_{n_d} \esmtx
   \right) \ge  1.
\end{align*}
Thus, Condition A) in Lemma~\ref{lem:MatrixSSV} fails to holds with
the complex matrix $M(e^{j\theta_0})$. By contraposition of Condition C)
in Lemma~\ref{lem:MatrixSSV}, there exists a matrix
$\Delta_0 \in \C^{n_w \times n_v}$ such that: (i)
$\bar{\sigma}(\Delta_0)\le 1$ and (ii) either
$F_U(M(e^{j\theta_0}),\Delta_0)$ is not well-posed or
$\bar{\sigma}(F_U(M(e^{j\theta_0}),\Delta_0)) \ge 1$. 

The offending matrix $\Delta_0$ can be interpolated to construct an
offending system $\Delta(z)$. Specifically, if the frequency is
$\theta_0\in (0,\pi) \cup (\pi,2\pi)$ then we can construct a stable,
LTI system $\Delta$ such that $\Delta(e^{j\theta_0})=\Delta_0$ and
$\|\Delta\|_\infty \le 1$. Theorem 9.1 of \cite{zhou96} gives the
construction for continuous-time and a similar construction works in
discrete-time.  One additional detail is required if the frequency is
$\theta_0=0$, $\pi$, or $2\pi$. ($M(e^{j\theta_0})$ is the
  same for $\theta_0=0$ and $2\pi$.) The frequency response
$M(e^{j\theta_0})$ is a real matrix at these frequencies.  In this
case, it is possible to construct $\Delta_0$ to be a real (rather than
complex) matrix.  This follows from a result for the structured
singular value (Section 9.7 of \cite{packard93}).  Thus if
$\theta_0=0$, $\pi$, or $2\pi$ then we can define a constant system
$\Delta(z)=\Delta_0$ with
$\|\Delta\|_\infty = \bar{\sigma}(\Delta_0)\le 1$.

There are two possibilities for the remainder of the proof. If
$F_U(M(e^{j\theta_0}),\Delta_0)$ is not well-posed then
$\det(I-M(e^{j\theta_0})\Delta(e^{j\theta_0}))=0$.  This implies the
system $\Delta$ causes $F_U(M,\Delta)$ to be unstable with a pole on
the unit disk at $z=e^{j\theta_0}$.  Alternatively, if
$\bar{\sigma}(F_U(M(e^{j\theta_0}),\Delta_0)) \ge 1$ then the system
$F_U(M,\Delta)$ satisfies:
\begin{align*}
  \| F_U(M,\Delta)\|_\infty \ge
        \bar{\sigma}\left(F_U(M(e^{j\theta_0}),\Delta(e^{j\theta_0}) )\right) \ge 1.
\end{align*}
In either possibility, Condition B) of Lemma~\ref{lem:SystemSSV} fails
to hold.
\end{proof}

\end{document}